\newtheorem*{rep@thm}{\rep@title}
\newcommand{\newreptheorem}[2]{%
\newenvironment{rep#1}[1]{%
 \def\rep@title{#2 \ref{##1}}%
 \begin{rep@thm}}%
 {\end{rep@thm}}}
\newtheorem{thm}{Theorem}[section]
\newtheorem{cor}[thm]{Corollary}
\newtheorem{prop}[thm]{Proposition}
\newtheorem{lem}[thm]{Lemma}
\theoremstyle{definition}
\newtheorem{defn}[thm]{Definition}
\newtheorem{exmp}[thm]{Example}
\theoremstyle{remark}
\newtheorem*{rem}{Remark}
\newcommand{\Z}{\mathbb Z}
\newcommand{\R}{\mathbb R}
\newcommand{\la}{\langle}
\newcommand{\ra}{\rangle}
\let\epsilon\varepsilon
\let\phi\varphi
\let\bdy\partial
\newcommand{\case}[1]{\vspace{6pt}\noindent\textit{#1}}
\newcommand{\casesend}{\vspace{6pt}}
\newcommand{\calC}{\mathcal C}
\newcommand{\laa}{\la\kern-1.2ex~\la}
\newcommand{\raa}{\ra\kern-1.2ex~\ra}
\DeclareMathOperator{\Ind}{Ind}     
\DeclareMathOperator{\maxdeg}{maxdeg}       
\DeclareMathOperator{\mindeg}{mindeg}       
\DeclareMathOperator{\mir}{mir}     
\DeclareMathOperator{\rev}{rev}     
\DeclareMathOperator{\rot}{rot}     
\DeclareMathOperator{\Seq}{Seq}     
\DeclareMathOperator{\sign}{sign}   
\DeclareMathOperator{\ssgn}{ssgn}   
\DeclareMathOperator{\sym}{sym}     
\DeclareMathOperator{\wri}{wr}      
\newcommand{\supscr}{\textsuperscript}
\title{Signed Heights of Knotoids}
\author{Larsen Linov}
\begin{document}

\begin{abstract}
The height of a knotoid is a measure of how far it is from being a knot. Here we define the positive and negative parts of the height, and we prove that they determine the unsigned height. Some polynomial invariants provide lower bounds for the signed heights. We also study a set of sequences associated to a knotoid.
\end{abstract}

\maketitle

\section{Introduction}

The theory of knotoids, introduced by Turaev in \cite{Tur12}, is an extension of classical knot theory. Knotoids have been studied recently in \cite{GuKa17}, \cite{BBHL18}, \cite{KIL18}, and \cite{KoTa20}, and they have also been used for studying proteins.

\subsection{Knotoids and Sign Sequences}

A \emph{knotoid diagram} is an immersion of an interval into $S^2$ with only transverse double crossings, together with over/under crossing information. A \emph{knotoid} is a class of knotoid diagrams up to planar isotopy and the Reidemeister moves performed away from the endpoints. (We will provide a geometric definition as well, in \Cref{sec:theta-curves}.) Knotoids are always considered to be oriented, that is, the endpoints are labelled as the \emph{tail} $v_0$ and the \emph{head} $v_1$.

A \emph{shortcut} for a knotoid diagram $K$ is an embedded path from $v_0$ to $v_1$ that intersects $K$ transversely and does not intersect the crossings. Of course, every knotoid diagram has many shortcuts. The intersections between $K$ and a shortcut are signed; see \Cref{fig:shortcut-int-sign}. (The endpoints are not considered to be intersections.)

\begin{figure}
\includegraphics[scale=0.25]{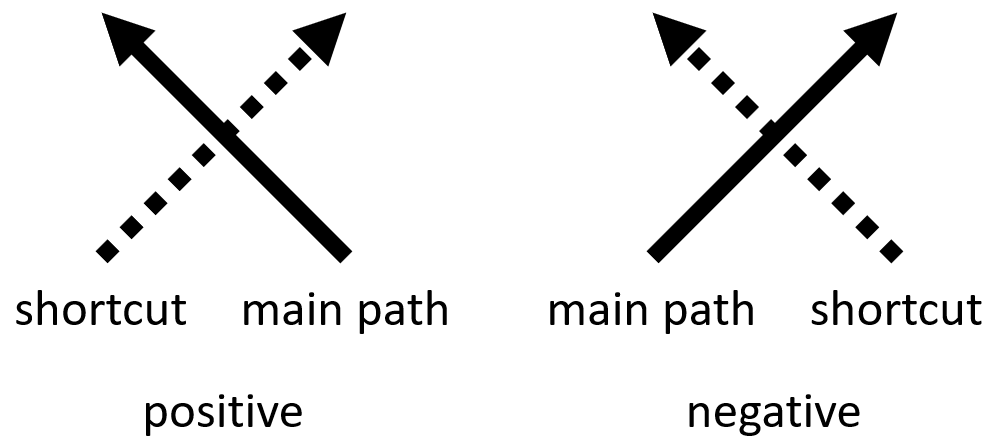}
\caption{The signs of intersections with a shortcut.}
\label{fig:shortcut-int-sign}
\end{figure}

A pair $(K, a)$, where $a$ is a shortcut for $K$, will be called a \emph{shortcut diagram}. We can connect any two shortcut diagrams for a knotoid by planar isotopy, the Reidemeister moves away from the shortcut, and the three \emph{shortcut moves} shown in \Cref{fig:shortcut-moves}.

\begin{figure}
\includegraphics[scale=0.25]{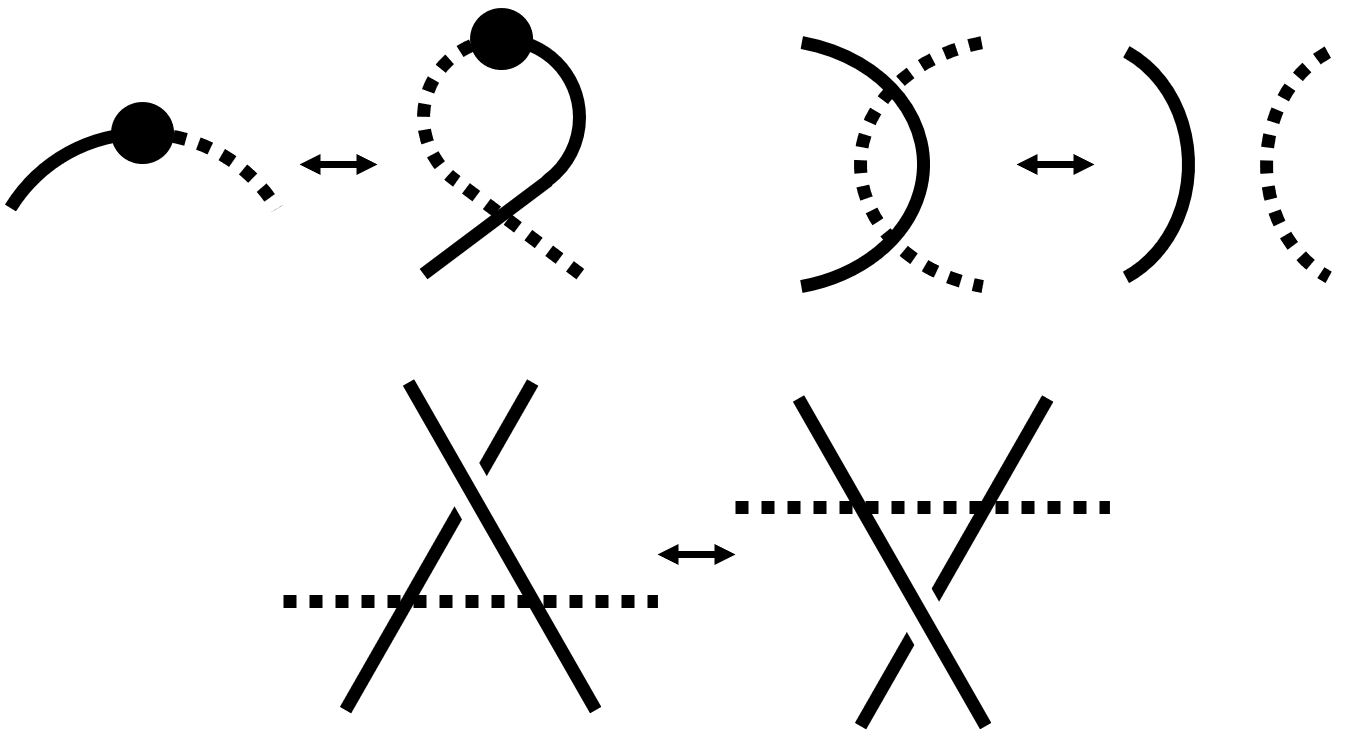}
\caption{The shortcut moves, Types I, II, and III.}
\label{fig:shortcut-moves}
\end{figure}

\begin{figure}
\includegraphics[scale=0.25]{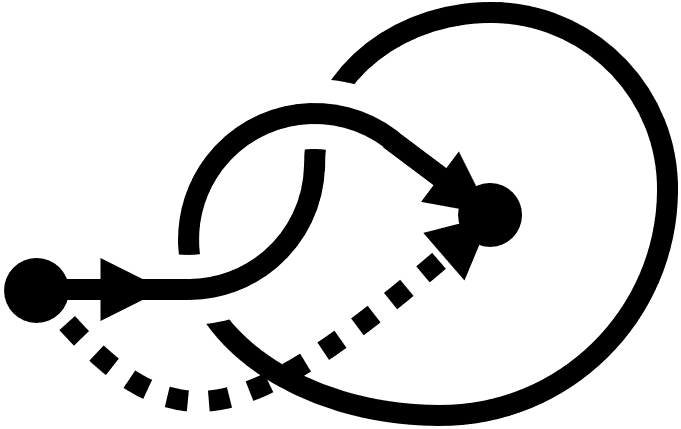}
\caption{The bifoil, an example of a nontrivial knotoid. The sign sequence of this shortcut diagram is $(+)$.}
\label{fig:bifoil}
\end{figure}

For each diagram $K$ and shortcut $a$, the \emph{height} $h(K, a)$ of the pair is the number of intersection points with $a$. A \emph{sign sequence} is a finite sequence with values in $\{+, -\}$, and for a shortcut diagram $(K, a)$ we define $\Seq(K, a)$ to be the sign sequence of length $h(K, a)$ expressing the signs of the intersections between $K$ and $a$ in the order they appear when following $K$ from $v_0$ to $v_1$. The \emph{positive} (resp. \emph{negative}) \emph{height} $h_\pm(K, a)$ is the number of appearances of $+$ (resp. $-$) in $\Seq(K, a)$.

These values give rise to natural invariants of knotoids:

\begin{defn}
For a knotoid $k$, the \emph{height} of $k$ is the minimum of the heights $h(K, a)$ over all shortcut diagrams representing $k$. (This is the \emph{complexity} in \cite{Tur12}.) We define the \emph{signed heights} $h_\pm(k)$ similarly.
\end{defn}

\begin{defn}
A sign sequence is \emph{attainable} for $k$ if it is $\Seq(K, a)$ for some shortcut diagram representing $k$. We will care in particular about the \emph{minimal} attainable sequences, that is, those realizing the height.
\end{defn}

\begin{rem}
There are several related theories not considered in this paper. In particular, knotoids on $\R^2$, virtual knotoids, and multi-knotoids are interesting generalizations.
\end{rem}

\subsection{Main Results}

There is a simple relationship between the height of a knotoid $k$ and its signed heights.

\begin{thm}\label{thm:main-thm}
For all $k$, $h(k) = h_+(k) + h_-(k)$.
\end{thm}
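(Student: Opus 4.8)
The statement splits into two inequalities, and one of them is immediate. For any shortcut diagram $(K,a)$ we have by definition $h(K,a) = h_+(K,a) + h_-(K,a)$, since each intersection with $a$ is counted as exactly one of positive or negative. Choosing $(K,a)$ to realize the height, so that $h(K,a) = h(k)$, and using $h_\pm(K,a) \ge h_\pm(k)$, gives $h(k) \ge h_+(k) + h_-(k)$. The entire content of the theorem is therefore the reverse inequality $h(k) \le h_+(k) + h_-(k)$.

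I would first reformulate this reverse inequality. Because $h_+(K,a) \ge h_+(k)$ and $h_-(K,a) \ge h_-(k)$ hold for every diagram, a diagram $(K,a)$ satisfies $h(K,a) = h_+(K,a) + h_-(K,a) \le h_+(k) + h_-(k)$ if and only if it realizes both signed heights at once, that is, $h_+(K,a) = h_+(k)$ and $h_-(K,a) = h_-(k)$. So the theorem is equivalent to the assertion that the two signed heights are \emph{simultaneously} attainable: there is a single shortcut diagram for $k$ whose sign sequence contains exactly $h_+(k)$ pluses and exactly $h_-(k)$ minuses. This is the statement I would aim to prove.

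The plan is to start from a diagram $D = (K,a)$ realizing $h_+(k)$ and to remove its excess negative intersections one at a time without ever creating a positive intersection, inducting on $h_-(D)$. Since the positive count can never drop below $h_+(k)$, if no positive intersection is ever created it stays equal to $h_+(k)$ throughout, and the goal is to drive the negative count down to $h_-(k)$. The tools are the moves connecting shortcut diagrams: Reidemeister moves performed away from the shortcut do not alter any intersection with $a$, and hence change neither signed height, while the shortcut moves let one introduce or remove a canceling pair of oppositely-signed intersections (a bigon between $K$ and $a$), reorder intersections by sliding the shortcut past a crossing of $K$, and perform one further move localized near an endpoint. I would use reordering to bring an excess negative intersection next to a positive one (or into the relevant endpoint region) and then cancel it, arranging each cancellation to destroy a negative intersection while leaving the positives intact. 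A clean way to locate a cancellation is an innermost-disk argument in the annulus $S^2 \setminus \{v_0,v_1\}$, where the underlying curves of $K$ and $a$ are arcs joining the two punctures: an innermost bigon cut off by $K$ and $a$ supplies a candidate for reduction, and the over/under data at the crossings of $K$ that the bigon meets determine which shortcut moves realize the simplification.

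The main obstacle is exactly this decoupling of the two signs. A priori, reducing the number of negative intersections could force the positive count up, either permanently or temporarily, and the knotting of $K$ can obstruct the naive removal of a bigon, so the heart of the argument is to show that whenever $h_-(D) > h_-(k)$ there is a reducing configuration that can be eliminated without collateral creation of positive crossings. Equivalently, I must rule out getting stuck at a diagram that is a local minimum for the negative count among minimal-positive diagrams without being a global one. I would expect to handle this by selecting an innermost excess negative intersection and showing that the moves needed to cancel it can be confined to a region disjoint from the positive intersections, with the signs of any crossings created or destroyed controlled by the convention of \Cref{fig:shortcut-int-sign}; verifying that this control is always available, uniformly in the knotting of $K$, is the step I expect to require the most care.
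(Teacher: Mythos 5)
Your reformulation is sound: reducing to the claim that some single shortcut diagram realizes $h_+(k)$ and $h_-(k)$ simultaneously is exactly the content of the theorem, and it is also the underlying strategy of the paper's proof. But your proposed proof of that claim has a gap at precisely its central step, and one of your tools is unusable under your own constraints. You plan to start from a diagram $D$ with $h_+(D) = h_+(k)$ and cancel excess negative intersections, in part by ``bringing an excess negative intersection next to a positive one and then cancelling it.'' A cancellation of an adjacent $(+,-)$ pair (a Type II shortcut move across a bigon) removes one positive and one negative intersection; applied to a diagram with $h_+(D) = h_+(k)$ it would produce a diagram of $k$ with fewer than $h_+(k)$ positive intersections, contradicting the minimality of $h_+(k)$. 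So no such cancellation can ever be performed in your setting, and the only sign-reducing moves compatible with ``never create a positive intersection'' are Type I removals of a negative intersection at an endpoint. You give no argument that an excess negative intersection can always be maneuvered into such a removable position, nor that a monotone reduction (one never increasing either count) exists at all; this is exactly the kind of place where purely diagrammatic arguments fail, since sequences of moves between diagrams often must pass through more complicated intermediate diagrams. Acknowledging that this step ``requires the most care'' does not supply it: that step is the theorem.

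The paper's proof avoids this by leaving the diagrammatic setting entirely. Via $\tau$, knotoids become simple theta-curves and shortcut diagrams become spanning disks, and the reduction is done by surgery on disks in $S^3$: \Cref{lem:main-lem-v1} uses an innermost-circle/innermost-arc exchange argument (supported by \Cref{lem:immersion-lemma}) to connect an arbitrary spanning disk to a height-minimizing one through a chain of pairwise \emph{compatible} disks along which the unsigned height is nonincreasing. The missing ingredient in your plan is then supplied by \Cref{lem:compatible-height}: for compatible disks $D_1, D_2$, the union $D_1 \cup (-D_2)$ is an embedded oriented sphere, so the algebraic intersection number of $e_0$ with it lies in $\{-1,0,1\}$; combining this single inequality with $h(D_1) \geq h(D_2)$ forces $h_\pm(D_1) \geq h_\pm(D_2)$, i.e.\ monotonicity of \emph{both} signed heights at once. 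It is this oriented-sphere argument that decouples the two signs and rules out the ``stuck at a local minimum'' scenario you correctly identify as the main obstacle; nothing in your two-dimensional move-based plan plays that role.
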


\Cref{thm:main-thm} reduces questions about the height of a knotoid to questions about its signed heights, which form a \emph{height pair} $(h_+, h_-)$. This will make it easier to compute the heights of some knotoids. The theorem also implies that all minimal attainable sequences for a knotoid are rearrangements of each other. The next theorem provides another restriction on the set of minimal attainable sequences.

For a sign sequence $A$, a \emph{left shift move} of size $n$ on $A$ is the result of deleting $n$ appearances of $(-, +)$ as a consecutive subsequence and then inserting $n$ copies of $(+, -)$. Similarly, a \emph{right shift move} deletes copies $(+, -)$ and adds copies of $(-, +)$. The deletions and insertions all happen at the same time. For example, a nontrivial shift move on $(-, -, +, +)$ must be a left shift of size $1$, deleting the second and third entries. The possible results after inserting $(+, -)$ are $(+, -, -, +)$, $(-, +, -, +)$, and $(-, +, +, -)$.

\begin{thm}\label{thm:shift-moves}
If $A$ and $A'$ are two minimal attainable sequences for $k$, there is a sequence of minimal attainable sequences
$$A = A_0, A_1 \ldots, A_n = A'$$
such that each $A_{i+1}$ differs from $A_i$ by a shift move.
\end{thm}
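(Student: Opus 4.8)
My plan is to track the sign sequence through a chain of elementary moves connecting two minimal shortcut diagrams and to show that, after the moves are organized appropriately, every net change is a shift move. First I would realize $A$ and $A'$ as $\Seq(K,a)$ and $\Seq(K',a')$ for minimal shortcut diagrams $(K,a)$ and $(K',a')$ of $k$, and connect them by a finite sequence of planar isotopies, Reidemeister moves performed away from the shortcut, and the shortcut moves of \Cref{fig:shortcut-moves}. Planar isotopies and away-from-shortcut Reidemeister moves leave $\Seq$ unchanged, so the whole burden falls on the three shortcut moves, and I would record their effects: a Type I move inserts or deletes a single symbol adjacent to an endpoint (changing the height by $1$); a Type II move inserts or deletes a consecutive pair $(+,-)$ or $(-,+)$ (changing the height by $2$); and a Type III move transposes two symbols consecutive along $K$ while preserving their signs, since the self-crossing of $K$ lying between the two shortcut intersections lets their order along $K$ reverse without altering the local orientations. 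The key observation is that a Type III move between two sequences of equal length is exactly a shift move of size $1$ when the transposed symbols differ (insert the deleted pair back at the vacated slot), and the identity when they agree; this is consistent with \Cref{thm:main-thm}, which forces all minimal attainable sequences to share a common sign multiset, a multiset that every shift move preserves.

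Consequently, if the connecting chain could be chosen to stay at the minimal height $h(k)=h_+(k)+h_-(k)$, the result would be immediate: only Type III and trivial moves could occur, so consecutive diagrams would be minimal attainable and would differ by shift moves. The real content is therefore to remove the excursions on which the height exceeds $h(k)$. I would induct on the total excess $E=\sum_i\bigl(h(K_i,a_i)-h(k)\bigr)$, which is finite, nonnegative, and zero exactly in the constant-height case just handled. For the inductive step I would isolate a height-increasing move (Type I or II), pair it with the annihilation that eventually cancels it, and invoke commutation relations among the moves — distant transpositions commute with creations and annihilations, and a creation immediately followed by destruction of the same bigon cancels — to bring the pair together and lower $E$. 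The residual effect of such a paired excursion on the minimal sequence is to delete one consecutive canceling pair and reinsert another, which is precisely a shift move once the intervening transpositions are absorbed.

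The hard part, and where I expect the real work to concentrate, is the bookkeeping of this pairing on $S^2$: one must match each creation with a definite annihilation and track how the cyclic orders of the intersection points along $K$ and along $a$ evolve as Type III moves are commuted past them. The most delicate case is an excursion whose net effect relocates a like-signed canceling pair (for instance deleting a $(+,-)$ and inserting a $(+,-)$ elsewhere), which is a priori a product of several adjacent transpositions rather than a single shift; here I must check that the intermediate sequences produced by those transpositions are themselves attainable, so that the connecting chain never leaves the set of minimal attainable sequences. I would resolve this by always choosing, at each local height maximum, a canceling pair adjacent along $a$ bounding an empty bigon, so that its removal yields a genuine minimal representative and hence an attainable sequence at every step, exhibiting the net change as a composition of shift moves. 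Handling the Type I endpoint creations and annihilations runs in exact parallel, a created intersection near $v_0$ or $v_1$ being matched with its later removal.
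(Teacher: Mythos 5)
Your reading of the elementary moves contains an error that undercuts the mechanism of your argument. A Type III shortcut move slides the shortcut $a$ across a crossing of $K$ (or a strand of $K$ across the intersection of another strand with $a$); the two intersection points involved stay on their respective strands of $K$, keep their signs, and only their order \emph{along $a$} changes. Since $\Seq(K,a)$ records the order \emph{along $K$}, a Type III move does not change the sign sequence at all --- it is not a transposition of consecutive symbols. (This is consistent with the paper's observation that spanned embeddings $(\theta, D)$ for $\tau(k)$ correspond to shortcut diagrams modulo planar isotopy, Reidemeister moves away from $a$, and Type III moves: the sequence is an invariant of that equivalence class.) Consequently every change of $\Seq$ in your chain must come from Type I/II excursions above the minimal height, i.e.\ from exactly the part of your argument that is deferred.

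And that part contains the genuine gap: the induction on the total excess $E$, with ``commutation relations'' used to pair each creation with ``the annihilation that eventually cancels it'', is unjustified and is in effect the whole content of the theorem. The pair removed by a later Type II move need not be the pair that was created --- intersections created together can be separated by intervening moves, one being cancelled against a stranger while the other survives --- so there is no well-defined pairing to commute into adjacency; at a local height maximum there need be no cancelling pair bounding an empty bigon; and surgering the chain at such a point gives no guarantee that the result still connects $(K,a)$ to $(K',a')$ with smaller excess. There is no general monotone-rearrangement principle for Reidemeister-type move sequences (this is the same obstruction that blocks naive combinatorial proofs of height additivity), and the paper avoids it entirely by working three-dimensionally: minimal attainable sequences are realized by spanning disks of $\tau(k)$, an innermost-circle/arc surgery argument (\Cref{lem:main-lem-v1}, upgraded to \Cref{lem:main-lem} via \Cref{lem:compatible-height}) produces a chain of pairwise compatible disks all realizing the height, and the analysis of how $e_0$ meets the oriented sphere $D_1\cup(-D_2)$ (\Cref{lem:compatible-shift-moves}) shows that compatible equal-height disks have sequences differing by a single shift move. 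That analysis is also where shift moves of size $n>1$ arise in one step --- something your excursion-by-excursion scheme, which produces only size-one changes and relocations of like-signed pairs, would additionally have to reconcile with the requirement that every intermediate sequence be minimal and attainable.
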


The signed heights of a knotoid can be bounded by some polynomial invariants, in particular the Turaev polynomial $\laa k\raa_\circ$ of \cite{Tur12} and the index polynomial $F_k$ of \cite{KIL18}. For a nonzero Laurent polynomial $p(t) \in \Z[t^{\pm1}]$, we will write $\deg^+(p)$ for $\max\{\maxdeg(p), 0\}$ and $\deg^-(p)$ for $\max\{-\mindeg(p), 0\}$. We also set $\deg^\pm(0) = 0$. For a Laurent polynomial in multiple variables, the signed degree in a specific variable will be denoted by (for example) $\deg_t^\pm$.

\begin{thm}\label{thm:writhe-bound}
For a knotoid $k$, $h_\pm(k) \geq \deg^\pm(F_k)$.
\end{thm}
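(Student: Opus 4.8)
The plan is to show that each power of $t$ appearing in the index polynomial $F_k$ "comes from" an intersection of the appropriate sign with any shortcut, so that the number of positive (resp. negative) intersections must be at least the largest positive (resp. negative) exponent appearing. The natural route is to unwind the definition of $F_k$ from \cite{KIL18} and exhibit, for a fixed minimal shortcut diagram $(K,a)$ realizing $h_\pm(k)$, a direct relationship between the exponents of $t$ and the signs recorded in $\Seq(K,a)$.

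Let me think about how the index polynomial is built. For a knotoid diagram, one assigns to each crossing an integer index (a winding-type or parity count), and $F_k$ is a signed sum $\sum_c \sign(c)\,\bigl(t^{\Ind(c)}-1\bigr)$ over the crossings $c$, or some close variant. The key observation I would exploit is that the crossings of $K$ are in bijection with pairs of arcs, and the index of a crossing is computed by counting signed passages of strands — and these signed passages are exactly what a shortcut measures. Concretely, I would fix a shortcut $a$ and use it to compute the index of each crossing: the index $\Ind(c)$ should equal a signed count of intersections of $a$ with a sub-loop of $K$ determined by $c$. The first step, then, is to rewrite $\Ind(c)$ in terms of the intersection data along $a$, establishing that every index is a sum of the $\pm 1$ signs recorded in $\Seq(K,a)$.

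From that reformulation the bound should follow by a counting argument. Since each index $\Ind(c)$ is a signed sum of at most $h_+(K,a)$ plus-signs and $h_-(K,a)$ minus-signs drawn from $\Seq(K,a)$, we get $\Ind(c) \le h_+(K,a)$ and $\Ind(c)\ge -h_-(K,a)$ for every crossing $c$. Hence $\maxdeg(F_k)\le \max_c \Ind(c)\le h_+(K,a)$ and $\mindeg(F_k)\ge \min_c \Ind(c)\ge -h_-(K,a)$; taking the maximum with $0$ as in the definition of $\deg^\pm$ and then minimizing over shortcut diagrams gives $\deg^+(F_k)\le h_+(k)$ and $\deg^-(F_k)\le h_-(k)$, which is the claim. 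I would be careful to handle the $\deg^\pm(0)=0$ convention so that the inequality is trivially true when the relevant positive or negative degree vanishes.

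The main obstacle I expect is the first step: pinning down precisely how the index of a crossing is encoded by the shortcut's signed intersections, and verifying the sign conventions match so that positive exponents pair with positive intersections. This requires choosing the shortcut $a$ compatibly with the loops used to define each index and checking that the orientation of $K$ running from $v_0$ to $v_1$ induces the same sign on an intersection point whether it is counted for the height or for the index. A secondary point to confirm is that the index as defined in \cite{KIL18} is invariant and agrees crossing-by-crossing with this shortcut computation; if the published index is defined via a different (but equivalent) recipe, I would first prove the two definitions coincide, or work directly from whichever formulation makes the signed-intersection interpretation most transparent.
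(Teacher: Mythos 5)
Your proposal is correct and takes essentially the same approach as the paper: the paper's proof also uses the fact that a nonzero coefficient $J_n(k)$ of $t^n$ in $F_k$ forces every diagram to contain an $n$-crossing, whose associated loop has algebraic intersection number $n$ with the shortcut, so that $\Seq(K,a)$ must contain at least $n$ positive (or $-n$ negative) entries. The identity you flag as the main obstacle --- that $\Ind(c)$ equals the signed count of shortcut intersections along the sub-loop at $c$ --- is exactly the formulation the paper adopts in its definition of the intersection index, so your argument goes through verbatim in that framework.
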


\begin{thm}\label{thm:turaev-bound}
For a knotoid $k$, $2h_\pm(k) \geq \deg_u^\mp(\laa k\raa_\circ)$.
\end{thm}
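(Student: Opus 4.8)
The plan is to bound the $u$-degree directly from a single well-chosen diagram, exploiting that we only need an \emph{upper} bound on a degree and hence need not worry at all about cancellation in the state sum: if every state contributes a monomial whose $u$-exponent lies in a fixed interval, the reduced polynomial has $u$-degree inside that interval as well. First I would recall from \cite{Tur12} that $\laa k\raa_\circ$ is computed from any shortcut diagram $(K,a)$ by a Kauffman-type state sum, where each crossing is resolved in one of two ways and each resulting state $s$ consists of the long segment $\sigma_s$ from $v_0$ to $v_1$ together with some disjoint loops. The monomial of $s$ carries a power $u^{e(s)}$ whose exponent records the interaction of $\sigma_s$ with the shortcut. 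The whole strategy rests on two facts: $e(s)$ depends only on how the long segment sits relative to $a$, and the shortcut is disjoint from all crossings, so resolving a crossing never creates or destroys an intersection with $a$ — it only redistributes the existing intersection points between $\sigma_s$ and the loops.

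By \Cref{thm:main-thm} there is a single shortcut diagram $(K,a)$ with $h_+(K,a)=h_+(k)$ and $h_-(K,a)=h_-(k)$ simultaneously; since $\laa k\raa_\circ$ is an invariant, I may compute it from this diagram. Let $p_\pm=h_\pm(k)$ be the numbers of $\pm$ intersections of $K$ with $a$, and for a state $s$ let $p_\pm(s)$ count the $\pm$ intersections lying on $\sigma_s$, so trivially $p_\pm(s)\le p_\pm$. The key claim I would establish is the per-state estimate
$$-2\,p_+(s)\ \le\ e(s)\ \le\ 2\,p_-(s),$$
placing the $u$-exponent of every state in $[-2h_+(k),\,2h_-(k)]$. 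Granting this, $\maxdeg_u(\laa k\raa_\circ)\le 2h_-(k)$ and $\mindeg_u(\laa k\raa_\circ)\ge -2h_+(k)$, which is exactly $\deg_u^+\le 2h_-$ and $\deg_u^-\le 2h_+$, i.e.\ $2h_\pm(k)\ge\deg_u^\mp(\laa k\raa_\circ)$, with the $\mp$ (rather than $\pm$) fixed by matching Turaev's sign convention to the orientation of intersections in \Cref{fig:shortcut-int-sign}.

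The heart of the argument — and the step I expect to be the main obstacle — is verifying this per-state estimate. Here one must unwind the precise definition of $e(s)$: it is a signed intersection count of $\sigma_s$ against $a$, doubled by the normalization built into $\laa\cdot\raa_\circ$, and most cleanly described as (twice) the relative winding class of the closed curve obtained by capping $\sigma_s$ off along $a$, which is an isotopy invariant of the pair and so independent of the chosen representative. The subtlety is that a single resolution can peel intersection points off the long segment onto a loop, altering $\sigma_s$ and hence $e(s)$; I would argue that such a move can only pull $e(s)$ toward $0$ relative to the contribution of the removed points, so that the extreme exponents are governed by the full counts $p_\pm$ rather than by any particular state. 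Once $e(s)$ is identified with this relative winding and the winding is shown to be bounded in absolute value by twice the oppositely-signed intersections carried by $\sigma_s$, the per-state estimate follows and the theorem is complete.
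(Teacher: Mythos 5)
Your overall skeleton is the paper's: fix a shortcut diagram realizing both signed heights at once (legitimate, by \Cref{thm:main-thm}), note that no cancellation in the state sum can push $u$-exponents outside the range achieved by individual states, and bound the exponent of each state. The gap is that your key per-state claim, $-2p_+(s)\le e(s)\le 2p_-(s)$, is false, and so is the monotonicity principle (``peeling intersection points off $\sigma_s$ can only pull $e(s)$ toward $0$'') by which you propose to prove it. Unwinding the definitions in \Cref{sec:turaev-poly}, the exponent is not intrinsic to $\sigma_s$ at all: it is $e(s)=a(s)-a(K)$, where $a(s)$ is the algebraic intersection number of $\sigma_s$ with $a$ and the offset $a(K)=h_+(K,a)-h_-(K,a)$ enters through the normalization factor $u^{-a(K)}$ (the exponent is shifted by the normalization, not ``doubled''). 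Peeling points off $\sigma_s$ does pull $a(s)$ toward $0$, but it therefore pulls $e(s)$ toward $-a(K)$, i.e.\ \emph{away} from $0$ whenever $a(K)\neq 0$. Concretely, suppose $K$ meets $a$ in two points that are both positive, as happens when the strand winds twice around $v_1$ (height pair $(2,0)$, as for the spiral knotoid $\varphi_2$), and let $s$ be a state in which both points lie on a loop of the smoothing; such states exist, since an embedded loop of winding number zero can cross $a$ at both points, traversing the strand forward at one and backward at the other. Then $a(s)=0$, so $e(s)=-2$, while your claimed lower bound is $-2p_+(s)=0$. (The paper's per-state bound $e(s)\ge -2h_+(K,a)=-4$ is of course consistent with this.)

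The repair is exactly the paper's proof: bound $e(s)$ by the \emph{diagram} totals rather than the state's own counts. Since smoothing happens away from $a$, the intersections of $\sigma_s$ with $a$ form a subset of the $h_+(K,a)+h_-(K,a)$ points of $K\cap a$, whatever signs they acquire, so $|a(s)|\le h_+(K,a)+h_-(K,a)$ and hence
\begin{equation*}
-2h_+(K,a)\ \le\ e(s)=a(s)-a(K)\ \le\ 2h_-(K,a).
\end{equation*}
Combined with your (correct) choice of a diagram attaining $h_\pm(K,a)=h_\pm(k)$ --- or simply by taking the minimum of each side of the bound over all diagrams --- this yields $\deg_u^\mp(\laa k\raa_\circ)\le 2h_\pm(k)$. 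Ironically, your closing remark that ``the extreme exponents are governed by the full counts $p_\pm$ rather than by any particular state'' is precisely the correct statement; but it contradicts, rather than supports, the per-state estimate you name as the heart of the argument.
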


The index polynomial also gives more specific information about attainable sign sequences.

\begin{thm}\label{thm:writhe-sequences}
Any attainable sequence for $k$ must have a consecutive subsequence adding up to $\deg^+(F_k)$, and a consecutive subsequence adding up to $-\deg^-(F_k)$.
\end{thm}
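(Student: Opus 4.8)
The plan is to read the index polynomial $F_k$ off of a shortcut diagram and to identify its exponents with differences of partial sums of the sign sequence. Fix a shortcut diagram $(K,a)$ with $\Seq(K,a) = A = (a_1, \dots, a_h)$, and write $s_0 = 0$ and $s_m = a_1 + \cdots + a_m$ for the partial sums. Traversing $K$ from $v_0$ to $v_1$, the shortcut $a$ cuts $K$ into $h+1$ consecutive segments $\sigma_0, \dots, \sigma_h$, and $s_m$ is the signed count of shortcut intersections crossed before reaching $\sigma_m$; thus $s_m$ is a locally constant height function on the segments that jumps by $a_m$ each time $K$ crosses $a$.

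First I would recall the description of $F_k$ in terms of this data. Each self-crossing $c$ of $K$ is visited twice by the traversal, once on a segment $\sigma_p$ and once on a segment $\sigma_q$, and its index is the difference $w_c = s_p - s_q$ of the two heights, with the orientation convention that yields $F_k = \sum_c \sign(c)\,(t^{w_c} - 1)$ on a shortcut diagram (this is essentially the computation behind \Cref{thm:writhe-bound}). The key combinatorial observation is that such a difference of partial sums is exactly a signed consecutive subsequence sum: if $p > q$ then $w_c = a_{q+1} + \cdots + a_p$ is the sum of the consecutive block $(a_{q+1}, \dots, a_p)$, and if $p < q$ then $-w_c$ is the sum of $(a_{p+1}, \dots, a_q)$.

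Next I would extract the degrees. Since the terms $-1$ in $F_k$ affect only the coefficient of $t^0$, every exponent appearing with nonzero coefficient in $F_k$ is realized as $w_c$ for some crossing $c$. If $\deg^+(F_k) > 0$, then $t^{\deg^+(F_k)}$ has nonzero coefficient, so there is a crossing with $w_c = \deg^+(F_k) > 0$; by the observation above this forces $p > q$, and the consecutive subsequence $(a_{q+1}, \dots, a_p)$ sums to $\deg^+(F_k)$. When $\deg^+(F_k) = 0$ the empty consecutive subsequence suffices. The statement for $-\deg^-(F_k)$ is entirely symmetric, using the minimal exponent instead. Because $F_k$ is an invariant, the same conclusion holds for every shortcut diagram, hence for every attainable sequence.

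The main obstacle is purely setup: pinning down the precise definition of $F_k$ from \cite{KIL18} and verifying that its crossing indices coincide with the partial-sum differences $s_p - s_q$, with the correct sign conventions so that $F_k = \sum_c \sign(c)(t^{w_c} - 1)$ holds on a shortcut diagram. Once this dictionary is in place --- which is essentially the content already needed for \Cref{thm:writhe-bound} --- the remainder is the elementary remark that a difference of partial sums is a consecutive subsequence sum.
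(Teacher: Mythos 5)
Your proof is correct and takes essentially the same route as the paper: the paper observes that a nonzero coefficient $J_n(k)$ of $t^n$ ($n \neq 0$) in $F_k$ forces every shortcut diagram to contain an $n$-crossing, and that the loop formed at such a crossing meets the shortcut algebraically $n$ times --- which is precisely your identity that the crossing index is a difference of partial sums, i.e., a consecutive-block sum of $\Seq(K,a)$; moreover the ``dictionary'' you flag as the main obstacle is immediate from the paper's definition, since $\Ind(c)$ is defined there as the intersection number of the loop $L$ with any shortcut. The only cosmetic discrepancy is the paper's convention that $J_n$ is \emph{half} the signed count of $n$-crossings, so your formula should read $F_k = \tfrac12\sum_{c}\sign(c)\,(t^{\Ind(c)}-1)$; this scalar does not change the support of $F_k$, hence not its signed degrees, and your argument goes through unchanged.
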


In the theorem above, of course, we treat $+$ terms as $+1$ and $-$ as $-1$.

\begin{thm}\label{thm:unique-sequences}
If $k$ is a knotoid such that the bounds in \Cref{thm:writhe-bound} are equalities, then $k$ has a unique minimal attainable sign sequence.
\end{thm}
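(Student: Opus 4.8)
Write $p = \deg^+(F_k)$ and $q = \deg^-(F_k)$, so that by hypothesis $h_+(k) = p$ and $h_-(k) = q$. Since $h_+$ and $h_-$ are the minimal numbers of plus and of minus signs over all shortcut diagrams, \Cref{thm:main-thm} forces any height-realizing sequence to have length $p+q$ with exactly $p$ plus signs and $q$ minus signs. Fix such a sequence $A = \Seq(K,a)$ for a height-realizing shortcut diagram $(K,a)$. My first step is to pin down the coarse shape of $A$ using \Cref{thm:writhe-sequences}: $A$ contains a consecutive block summing to $p$. Because $A$ has only $p$ plus signs in all, such a block can only be a run of $p$ consecutive plus signs containing no minus sign, so all plus signs of $A$ are consecutive; dually, the block summing to $-q$ forces all minus signs to be consecutive. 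Hence, assuming $p, q \ge 1$ (the cases $p = 0$ or $q = 0$ being immediate), $A$ is one of the two sequences $B_+ = (+^p, -^q)$ or $B_- = (-^q, +^p)$.

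The remaining task is to show that only one of $B_+, B_-$ can occur, and to do this I would track the running winding number along $K$. Writing $A = (\epsilon_1, \dots, \epsilon_{p+q})$ with $\epsilon_j \in \{+1, -1\}$, set $s_0 = 0$ and $s_j = \epsilon_1 + \cdots + \epsilon_j$, so that $s_j$ is the algebraic intersection number of the shortcut $a$ with the initial arc of $K$ up through its $j$-th crossing of $a$. For every $j$ we have $s_j \le (\text{number of plus signs among } \epsilon_1, \dots, \epsilon_j) \le p$, and the equality $s_j = p$ forces the first $j$ entries to be plus signs exhausting all $p$ of them, i.e.\ $j = p$ and $A = B_+$. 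Thus it suffices to prove $\max_j s_j = p$. The plan is to extract from the proof of \Cref{thm:writhe-bound} the sharper, orientation-aware inequality $\deg^+(F_k) \le \max_j s_j$, valid for every shortcut diagram; granting this, $p = \deg^+(F_k) \le \max_j s_j \le p$ forces $\max_j s_j = p$ and hence $A = B_+$. As this conclusion holds for every height-realizing diagram, the minimal attainable sequence is unique.

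The main obstacle is precisely this refinement of \Cref{thm:writhe-bound}. The invariants quoted in \Cref{thm:writhe-bound} and \Cref{thm:writhe-sequences}, as stated, are symmetric under reversing $A$: both $B_+$ and $B_-$ carry a consecutive block summing to $p$ and one summing to $-q$, and both have $p$ plus and $q$ minus signs, so the stated forms cannot by themselves select an ordering. What distinguishes $B_+$ from $B_-$ is where the winding function $s_j$ attains its extreme value relative to the tail, and I expect the computation of $F_k$ from a shortcut diagram to assign to each contributing crossing an exponent equal to a value of $s$, so that no exponent can exceed $\max_j s_j$; this is exactly the inequality I need. I would therefore verify that the bound of \Cref{thm:writhe-bound} is in fact proved in this stronger signed form, and, as an internal consistency check, confirm that the analogous bound for the sequence read from the head $v_1$ constrains the minus signs to cluster at the head end, again yielding $B_+$ rather than a competing constraint.
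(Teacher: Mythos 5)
Your opening step coincides with the paper's: by \Cref{thm:main-thm} and \Cref{thm:writhe-sequences}, any minimal attainable sequence must be $A_1 = (+,\ldots,+,-,\ldots,-)$ or $A_2 = (-,\ldots,-,+,\ldots,+)$. But your second step---the crux---rests on a claim that is false, and the conclusion you aim for (that the surviving sequence is always $A_1$) is itself false. The exponents contributing to $F_k$ are crossing indices, and the index of a crossing is a \emph{difference} $s_{j_2} - s_{j_1}$ of two values of your winding function, not a single value of $s$; since $s_{j_1}$ may be negative, nothing bounds $\deg^+(F_k)$ by $\max_j s_j$. A concrete counterexample is $\rot(\omega)$, the rotation of the Kinoshita knotoid: by \Cref{prop:signed-ht-involutions} and the propositions on $n$-writhes under involutions, it has $F_{\rot(\omega)} = t^{-1} - 2 + t$ and height pair $(1,1)$, so it satisfies the hypotheses of the theorem with $p = q = 1$; yet by \Cref{prop:involutions-sequences} its unique minimal attainable sequence is $(-,+)$, realized by a diagram whose partial sums are $0, -1, 0$, giving $\max_j s_j = 0 < 1 = \deg^+(F_{\rot(\omega)})$. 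There is also a conceptual obstruction showing no ``orientation-aware'' refinement of \Cref{thm:writhe-bound} can exist: $J_n(\rev(k)) = J_n(k)$ and $h_\pm(\rev(k)) = h_\pm(k)$, while reversal reverses attainable sequences and hence exchanges $A_1$ with $A_2$. So whenever $k$ satisfies the hypotheses with unique sequence $A_1$, the knotoid $\rev(k)$ satisfies \emph{identical} hypotheses with unique sequence $A_2$; the data you permit yourself cannot distinguish the two cases, so your ``internal consistency check'' would have failed.

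The paper's actual argument for this step is necessarily different, and it never identifies which of $A_1, A_2$ survives. When $h_+(k), h_-(k) > 1$, it invokes \Cref{thm:shift-moves}: $A_1$ and $A_2$ are not related by a single shift move (deleting the unique consecutive $(+,-)$ from $A_1$ and inserting $(-,+)$ still leaves a $+$ before a $-$), and since they are the only candidate minimal sequences, they cannot both be attainable. When one signed height equals $1$ (where $A_1$ and $A_2$ \emph{are} related by a shift move, so that argument breaks down), the paper uses \Cref{lem:main-lem} to produce compatible spanning disks realizing both sequences, and a geometric analysis of the resulting pair of compatible shortcuts shows every crossing would have nonnegative index, forcing $J_n(k) = 0$ for all $n < 0$ and contradicting $\deg^-(F_k) = h_-(k) \geq 1$. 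Any repair of your proof must similarly argue that the two candidates cannot \emph{coexist}, rather than trying to single one of them out.
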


\subsection{Organization}

In \Cref{sec:background}, we give background information on knotoids, including basic operations and the relationship between knotoids and theta-curves. In \Cref{sec:moving-disks}, we prove \Cref{thm:main-thm} and \Cref{thm:shift-moves}. \Cref{sec:properties} addresses the behavior of height and sign sequences under knotoid operations. In \Cref{sec:poly-bounds}, we recall background on $n$-writhes and the Turaev polynomial, and we prove Theorems \ref{thm:writhe-bound} through \ref{thm:unique-sequences}. \Cref{sec:applications} contains interesting examples and applications to knotoids of low height.

\subsection{Acknowledgements}

I would like to thank my advisor, Ian Agol, for his thorough and careful feedback. I am also grateful to Kyle Miller for our very helpful conversations.

\section{Background on Knotoids and Theta-Curves}\label{sec:background}

\subsection{Closures and Knot-Type Knotoids}

The first examples of knotoid invariants are the \emph{over-} and \emph{underpass closures}. A shortcut diagram for $k$ gives rise to an oriented knot diagram by incorporating the shortcut into the diagram to create an immersion of a circle into $S^1$. By taking the shortcut to pass over (resp. under) the knotoid at each crossing, we obtain a diagram of the overpass (resp. underpass) closure of $k$, denoted $k_+$ (resp. $k_-$).

Conversely, given a diagram of an oriented knot $\kappa$, and a point on an edge of the diagram, we may obtain a knotoid diagram by deleting an open interval around the chosen point. The resulting knotoid depends only on $\kappa$ and is denoted $\kappa^\bullet$. By construction, $\kappa^\bullet$ has height $0$ and $(\kappa^\bullet)_\pm = \kappa$. Any knotoid of height $0$ may be obtained in this way, and so we have a natural identification of the set of height-$0$ knotoids with oriented knots. Such knotoids are called \emph{knot-type}. Knotoids that are not knot-type are \emph{proper}.

\subsection{Multiplication}

The set of knotoid types has a natural noncommutative product. Given knotoid diagrams $K_1$ and $K_2$, we may form a diagram $K_1K_2$ by deleting small open disks around the head of $K_1$ and tail of $K_2$, then gluing appropriately along the boundaries. The resulting knotoid depends only on the knotoids represented by $K_1$ and $K_2$. It also makes sense to refer to the product of two shortcut diagrams as another shortcut diagram.

Knotoid multiplication is associative. The \emph{trivial} knotoid, the one that can be drawn without crossings, is an identity. The over/underpass closure operations and the $\kappa \mapsto \kappa^\bullet$ operation are monoid homomorphisms.

A \emph{prime} knotoid is one that cannot be written as a nontrivial product. Every knotoid has a unique decomposition of the form
$$\kappa^\bullet k_1k_2\cdots k_n,$$
where each $k_i$ is a proper prime knotoid. A knot-type knotoid is prime if and only if the corresponding knot is prime. Two distinct prime knotoids commute if and only if one or both is knot-type (\cite{Tur12}).

\subsection{Basic Involutions}

For a knotoid $k$, the \emph{reverse} $\rev(k)$ is obtained by switching the orientation on a diagram of $K$, that is, swapping the labels of the vertices. The \emph{mirror image} $\mir(k)$ is obtained from switching the over/under information on each crossing, and the \emph{symmetry} operation acts by reversing the orientation of the ambient $S^2$. \emph{Rotation} is the composition of symmetry and mirror image reflection. See \Cref{fig:involutions}. The basic involutions generate a group isomorphic to $(\Z/2\Z)^3$.

\begin{figure}
\includegraphics[scale=0.25]{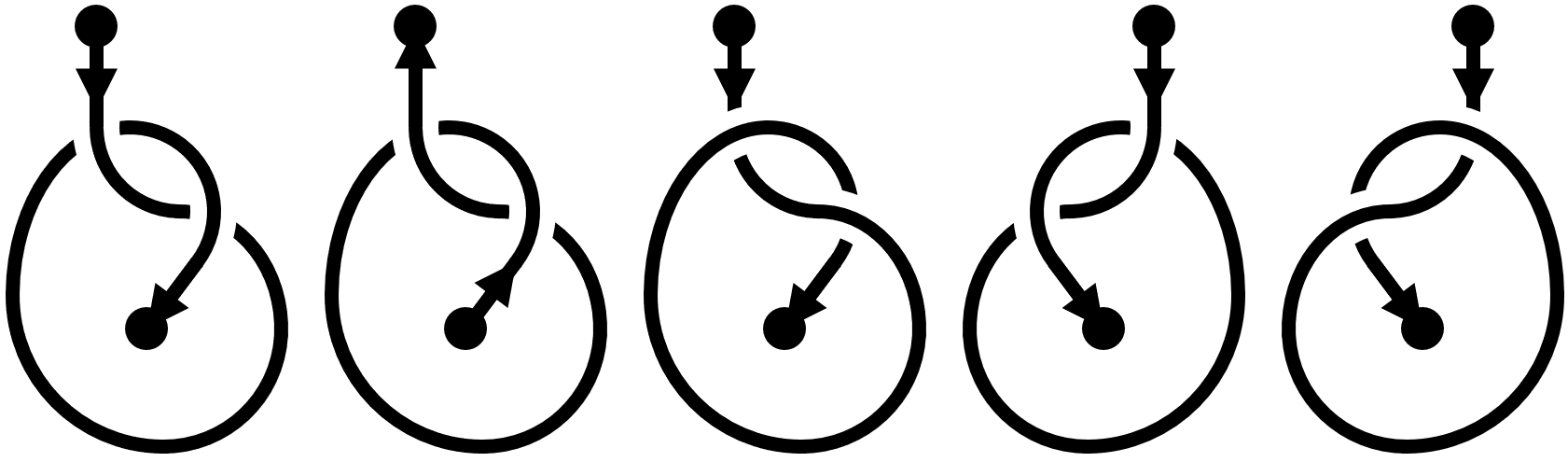}
\caption{From left to right, the bifoil $\varphi_1$, $\rev(\varphi_1)$, $\mir(\varphi_1)$, $\sym(\varphi_1)$, and $\rot(\varphi_1)$. Only the first two are equivalent.}
\label{fig:involutions}
\end{figure}

\subsection{Lifting}\label{sec:lifting}

Given a diagram $K$ of some knotoid $k$ and positive $n$, we may choose a lift of $K$ to the $n$-fold cover of $S^2$ branched over $v_0$ and $v_1$. The result is a new diagram $K/n$ with orientation and over/under information inherited from $K$, and $k/n$ is a well-defined knotoid. See \Cref{fig:lifting-fig}. For all $k$, the sequence $k/n$ stabilizes to a knot-type knotoid; we define $k/\infty$ to be the corresponding knot.

\begin{figure}
\includegraphics[scale=0.3]{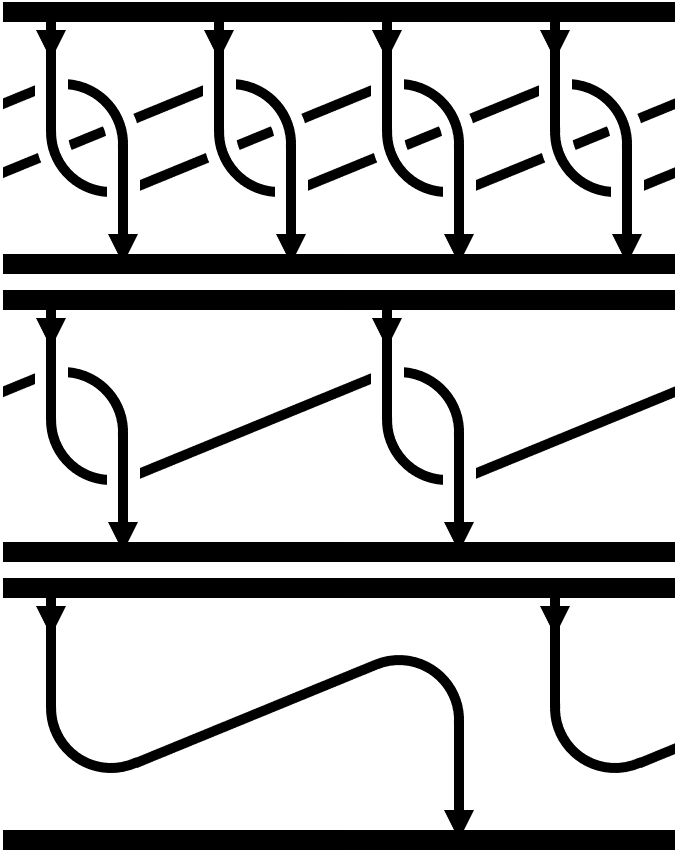}
\caption{Lifts of the spiral knotoid $\varphi_2$: $\varphi_2/1 = \varphi_2$, $\varphi_2/2 = \varphi_1$, and $\varphi_2/3 = (\varphi_2/\infty)^\bullet = 1$. Here we use \emph{periodic diagrams}: Given a knotoid diagram $K$ on $S^2$, we may delete regular neighborhoods of the endpoints to obtain a diagram on an annulus, with one end on each boundary component. Then on $\R \times I$ we draw the preimage of the diagram under the covering map.}
\label{fig:lifting-fig}
\end{figure}

A similar construction is studied in \cite{BBHL18}: The entire preimage of $K$ under the double cover of $S^2$ branched over the endpoints constitutes a diagram of an unoriented knot. The unoriented knot is an invariant and is called the \emph{double branched cover} of $k$.

\subsection{Framings}

A \emph{framing} of a knotoid $k$ is a class of diagrams of $k$ up to regular isotopy, that is, up to Reidemeister moves I', II, and III. Two diagrams of $k$ are in the same framing class if and only if they have the same writhe.

Similarly, a \emph{shortcut framing} for $k$ is a class of shortcut diagrams related by all moves except the Type I shortcut move. Shortcut framings are classified by algebraic intersection number between the main strand $K$ and the shortcut $a$.

\begin{figure}
\includegraphics[scale=0.2]{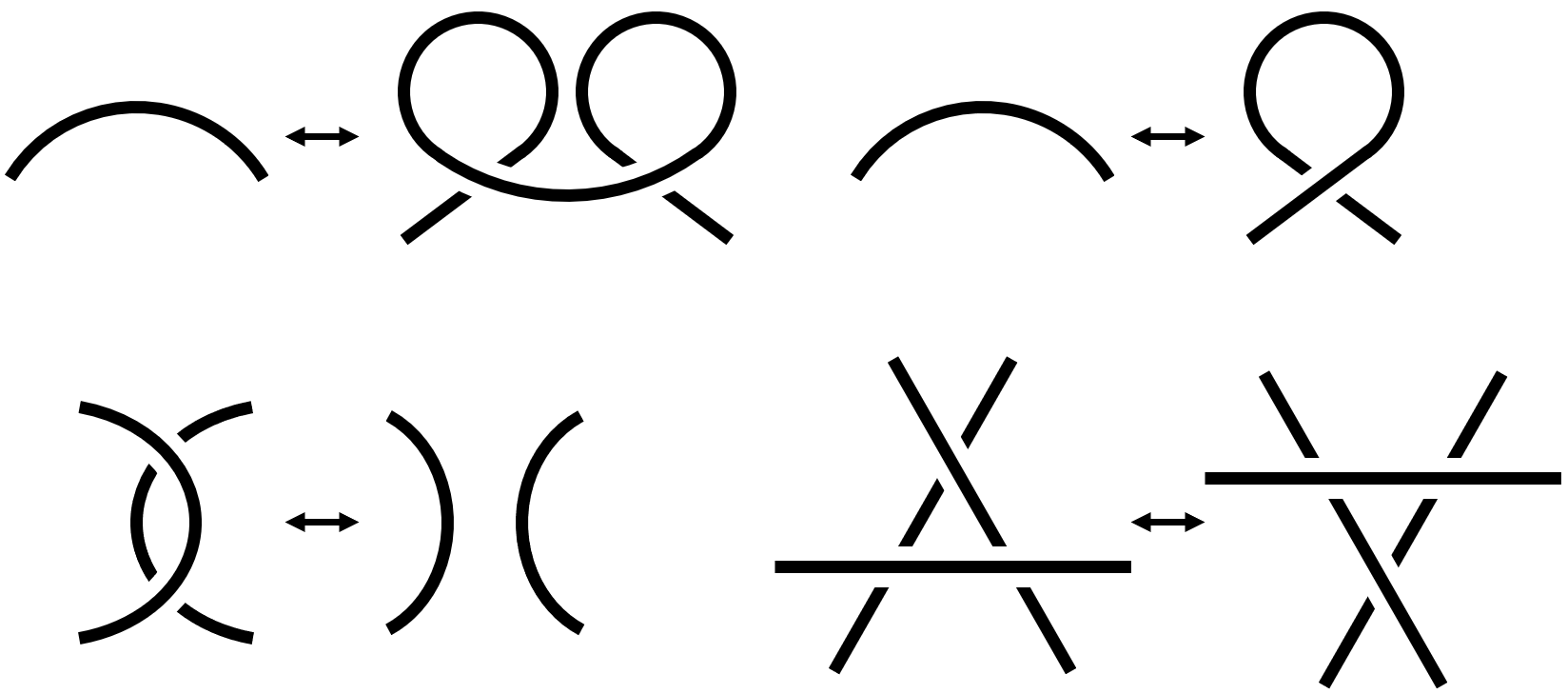}
\caption{Reidemeister moves, Types I', I, II, and III.}
\label{fig:reidemeister-moves}
\end{figure}

\subsection{Simple Theta-Curves}\label{sec:theta-curves}

The \emph{theta graph} $\Theta$ is the graph with two vertices, $v_0$ and $v_1$, and three oriented edges $e_0$, $e_+$, and $e_-$ from $v_0$ to $v_1$. A \emph{theta-curve} $\theta$ is an embedding of $\Theta$ into $S^3$. Such a curve is \emph{simple} if (the image of) $e_+ \cup e_-$ is unknotted. A \emph{spanning disk} $D$ for a simple theta-curve is a choice of embedded disk with boundary $\bdy D = e_+ \cup e_-$ such that $D$ intersects $e_0$ transversely. 

Like knotoids, isotopy classes of simple theta-curves form a monoid: The product of $\theta_1$ and $\theta_2$ is formed by deleting small balls around $v_1$ in $\theta_1$ and $v_0$ in $\theta_2$, and gluing the boundaries so that each edge of $\theta_1$ is glued to the edge of $\theta_2$ with the same label. The resulting theta-curve is well-defined up to isotopy, because the pure mapping class group of a thrice-punctured sphere is trivial. Similarly, we may multiply isotopy classes of pairs $(\theta, D)$.

The \emph{height} $h(\theta)$ of $\theta$ is the minimal number of intersections of a spanning disk with $e_0$, and the \emph{positive} and \emph{negative heights} $h_\pm$ are the minimal numbers of intersections of those signs. The \emph{sign sequence} associated to $(\theta, D)$ is the sequence of signs of the intersections of $e_0$ with $D$, in the order along $e_0$ from $v_0$ to $v_1$. Sequences obtained this way are \emph{attainable} for $\theta$.

\begin{figure}
\includegraphics[scale=0.15]{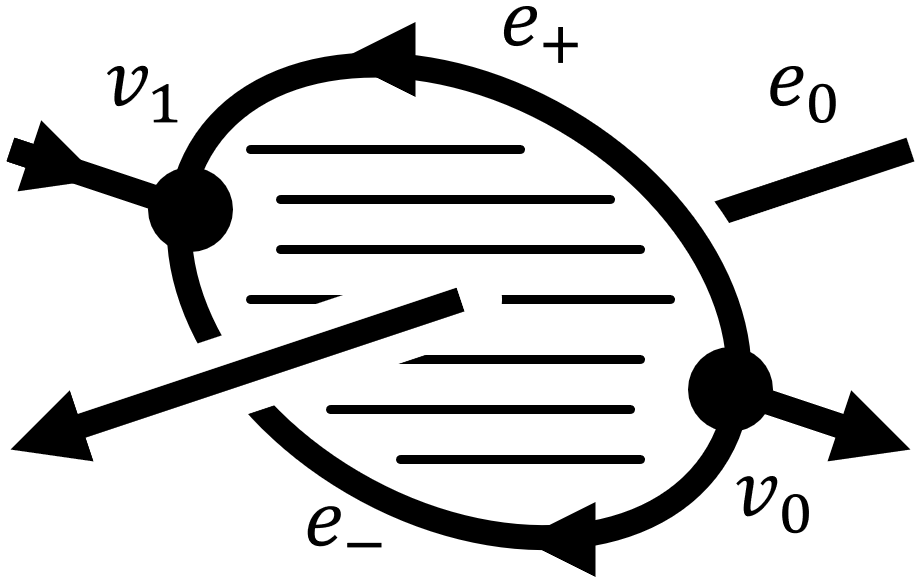}
\caption{A positive intersection of $e_0$ with a spanning disk.}
\label{fig:disk-sign-conv}
\end{figure}

There is a natural map $\tau$ from the set of knotoids to the set of isotopy classes of simple theta-curves. Given a shortcut diagram $(K, a)$, for a knotoid $k$, we may form a simple theta-curve by considering the diagram as lying in a neighborhood of $S^2 \subset S^3$, with $K$ lifting to an embedded path $e_0$, and adding edges $e_+$ and $e_-$ over and under $a$. The resulting theta-curve represents $\tau(k)$. Note that, in $\tau(k)$, the isotopy class of $e_0 \cup e_\pm$ is $k_\pm$.

\begin{thm}[Turaev \cite{Tur12}]
The map $\tau$ is a monoid isomorphism.
\end{thm}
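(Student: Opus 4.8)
The plan is to show that $\tau$ is a well-defined bijection on isotopy classes and respects the monoid structure. The multiplicative compatibility is nearly immediate from the definitions: both products are formed by deleting balls around the head/tail and regluing along matching boundary data, so I would verify $\tau(k_1 k_2) = \tau(k_1)\tau(k_2)$ by tracking how the edges $e_0, e_+, e_-$ are assembled, and confirm that the trivial knotoid maps to the trivial theta-curve. This reduces the main content to showing $\tau$ is a well-defined bijection of sets.

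For well-definedness, I would first check that $\tau$ does not depend on the choice of shortcut diagram representing $k$. Since any two shortcut diagrams for $k$ are connected by planar isotopy, Reidemeister moves away from the shortcut, and the three shortcut moves, I would verify that each of these moves induces an isotopy of the resulting theta-curve in $S^3$. The Reidemeister moves are handled by the standard fact that such moves correspond to ambient isotopies; the shortcut moves require checking that sliding $a$ across $K$ (Types I, II, III) can be realized by isotoping the pushed-off edges $e_\pm$ through $S^3$, using the extra dimension to pass strands. I would also confirm that the lifted $e_+ \cup e_-$ is genuinely unknotted, so that $\tau(k)$ really is \emph{simple}.

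The crux is constructing the inverse and proving bijectivity. Given a simple theta-curve $\theta$ with spanning disk $D$, I would project $e_0$ onto a diagram on $S^2$ and read off a shortcut $a$ from the trace of $D$; the intersections of $e_0$ with $D$ become the crossings of $K$ with $a$, recovering a shortcut diagram. The key point is that simplicity of $\theta$ (unknottedness of $e_+ \cup e_-$) guarantees the existence of such a spanning disk, and hence that $\theta$ lies in the image of $\tau$, giving surjectivity. For injectivity, I would argue that if $\tau(k)$ and $\tau(k')$ are isotopic theta-curves, the isotopy can be arranged (after an isotopy of the ambient $S^3$) to restrict to a sequence of the allowed knotoid and shortcut moves on the underlying shortcut diagrams, forcing $k = k'$.

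The main obstacle is the injectivity direction: translating an abstract ambient isotopy of theta-curves in $S^3$ back into the combinatorial language of Reidemeister and shortcut moves on $S^2$. This is where one must carefully control how the unknotted $e_+ \cup e_-$ pair, which has a unique spanning disk up to isotopy (by unknottedness), constrains the recovered shortcut, and ensure that generic position and finiteness arguments let us decompose the isotopy into the discrete move types. I expect this to mirror the standard proof that diagrams related by ambient isotopy differ by Reidemeister moves, with the extra bookkeeping for the spanning disk and shortcut being the genuinely new content.
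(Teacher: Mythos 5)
The paper does not actually prove this statement: it is quoted from Turaev's paper \cite{Tur12} and used as a black box, so there is no internal proof to compare against. Judged on its own, your outline follows what is essentially Turaev's strategy (and the only natural one): multiplicativity is immediate from the ball-gluing definitions, well-definedness is checked move by move (with the over/under pushoffs $e_\pm$ never needing to cross $e_0$ during shortcut moves, which is the ``extra dimension'' point you invoke), surjectivity comes from putting a spanning disk in standard vertical position and projecting, and injectivity is the real content.

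One point in your injectivity sketch needs to be stated much more carefully, because as written it is ambiguous between a true statement and a false one. You say $e_+ \cup e_-$ ``has a unique spanning disk up to isotopy (by unknottedness).'' The correct key fact is: any two embedded disks in $S^3$ with boundary the unknot $e_+ \cup e_-$ are ambient-isotopic \emph{rel boundary in $S^3$}, where the isotopy is allowed to sweep across $e_0$. Uniqueness of the spanning disk up to isotopy \emph{in the complement of $e_0$} (i.e.\ rel the whole theta-curve) is badly false --- if it were true, every shortcut diagram of a knotoid would have the same height and sign sequence, contradicting the entire subject of this paper (e.g.\ a Type II shortcut move changes $h(K,a)$ by $2$). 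The actual work of injectivity is then to take a generic such isotopy between two spanning disks and record what happens at its non-generic times: crossings of the disk through interior points of $e_0$ give Type II shortcut moves, crossings through the vertices $v_0, v_1$ give Type I shortcut moves, and the generic intervals give planar isotopy, Reidemeister moves away from $a$, and Type III moves. Your closing paragraph gestures at exactly this (``decompose the isotopy into the discrete move types''), so the plan is sound, but the parenthetical ``(by unknottedness)'' is doing far more work than it admits, and a referee would insist you spell out which isotopy class is unique and how the sweep through $e_0$ is converted into the two shortcut moves that your spanned-pair correspondence (Reidemeister moves plus Type III only) does not account for.
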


The construction above also yields a correspondence between shortcut diagrams for $k$ and spanning disks for $\tau(k)$: For each such diagram $(K, a)$, we may choose $D$ to be the ``vertical" disk between $e_+$ and $e_-$ in the constructed theta-curve $\theta$. Under this construction, the sign sequence of $(\theta, D)$ is the same as that of $(K, a)$. Every isotopy class of spanned embeddings $(\theta, D)$ for $\tau(k)$ may be obtained in this way from a shortcut diagram for $k$. Thus, heights and attainable sign sequences for $k$ correspond directly with heights and attainable sequences for $\tau(k)$. Isotopy classes of spanned embeddings $(\theta, D)$ for $\tau(k)$ correspond to classes of shortcut diagrams of $k$ under planar isotopy, the Reidemeister moves away from $a$, and the Type III shortcut move.

\section{Comparing Attainable Sequences}\label{sec:moving-disks}

Here we prove Theorems \ref{thm:main-thm} and \ref{thm:shift-moves}. But first, we establish that allowing certain self-intersections in spanning disks would not reduce the height of a simple theta-curve.

\begin{lem}\label{lem:immersion-lemma}
Suppose, for some simple theta-curve $\theta$, that $\phi : \Delta \to S^3$ is an immersed (not necessarily embedded) disk such that (a) $\phi(\bdy\Delta) = e_+ \cup e_-$, (b) $e_0$ intersects $\phi$ transversely, and (c) the self-intersections of $\Delta$ under $\phi$ are disjoint circles identified transversely in pairs. Then $\phi(\Delta)$ has at least $h(\theta)$ intersections with $e_0$.
\end{lem}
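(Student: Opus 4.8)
The plan is to induct on the number $n$ of double circles of $\phi$, showing at each stage that one double circle can be removed by a purely local modification that does not increase the number of intersections with $e_0$. Hypothesis (c) is exactly what makes this feasible: since the self-intersections form disjoint circles identified transversely in pairs, each double circle $\gamma \subset \phi(\Delta)$ has a tubular neighborhood $N(\gamma) \cong S^1 \times D^2$ in which $\phi(\Delta)$ appears as two transverse sheets, and we may choose $N(\gamma)$ small enough to be disjoint from $e_0$ and from every other double circle. We may also assume $e_0$ meets $\phi$ only at regular points, away from the double set, since both are one-dimensional in $S^3$.

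For the base case $n = 0$, the map $\phi$ is an embedding, so $\phi(\Delta)$ is an embedded disk with $\bdy\phi(\Delta) = e_+ \cup e_-$ meeting $e_0$ transversely; that is, it is a spanning disk, and $|\phi(\Delta) \cap e_0| \geq h(\theta)$ by the definition of the height.

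For the inductive step I would fix a double circle $\gamma$ with preimages $C_1, C_2 \subset \Delta$ and resolve the crossing of the two sheets inside $N(\gamma)$. There are two such resolutions, each replacing the two transverse sheets by two disjoint embedded sheets; in the domain $\Delta$ each corresponds to cutting annular neighborhoods of $C_1$ and $C_2$ and regluing their four boundary circles in one of two crosswise patterns. Because $\Delta$ is a disk, $C_1$ and $C_2$ each bound a subdisk, and a short case analysis of the possible configurations (disjoint subdisks, or one nested in the other) shows that in every case at least one of the two resolutions yields a surface whose component containing $\bdy\Delta$ is again a disk, all other components being closed. I would discard the closed components and keep this disk $\Delta'$. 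The resulting $\phi' : \Delta' \to S^3$ again satisfies (a)--(c): its boundary is still $e_+ \cup e_-$, the modification was supported in $N(\gamma)$ and so introduced no new double points, and it has strictly fewer double circles than $\phi$. Since the modification took place inside $N(\gamma)$, which is disjoint from $e_0$, and since discarding the closed components can only remove intersection points, we get $|\phi'(\Delta') \cap e_0| \leq |\phi(\Delta) \cap e_0|$. The inductive hypothesis then gives $|\phi(\Delta) \cap e_0| \geq |\phi'(\Delta') \cap e_0| \geq h(\theta)$.

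The main obstacle is the geometric bookkeeping in this single step: verifying that one of the two local resolutions always leaves the boundary-carrying component a disk (rather than raising its genus), and that the chosen resolution is realized by an honest embedded cut-and-paste of the two sheets inside $N(\gamma)$. The latter is standard, as the two resolutions correspond to replacing the local model $\{xy = 0\} \times S^1$ by $\{xy = \pm\epsilon\} \times S^1$, so the real content is the case analysis that identifies, in each configuration of $C_1$ and $C_2$, the resolution preserving the disk.
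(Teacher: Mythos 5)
Your proposal is correct and takes essentially the same approach as the paper: both arguments remove the double circles one at a time by a local smoothing near each double circle, choosing the resolution that keeps the boundary-carrying component a disk (the paper identifies it as the resolution preserving the number of components), and observe that since the surgery is supported away from $e_0$ the intersection count cannot increase, reducing to an embedded spanning disk. The only cosmetic differences are that the paper iterates rather than inducts and never needs to discard closed components, since the component-preserving resolution yields a single immersed disk outright.
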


\begin{proof}
There is some finite number of intersecting pairs of circles on $\Delta$. Any circle of self-intersection has a neighborhood in which $\phi$ looks like  the product of a plus sign with a circle. There are two ways of resolving the intersection by smoothing. The resolution that preserves the number of components creates a new immersion of a disk. Note that this resolution might not respect orientation. Replacing $\phi$ by this new immersion, we have reduced the number of self-intersections without changing the number of intersections with $e_0$. Proceeding in this fashion shows that there is a spanning disk with the same number of $e_0$ intersections as $\phi$.
\end{proof}

We will say that two spanning disks $D_1$ and $D_2$ for a theta-curve $\theta$ are \emph{compatible} if their interiors are disjoint. Compatible pairs of spanning disks are useful because of the following lemma.

\begin{lem}\label{lem:compatible-height}
If $D_1$ and $D_2$ are compatible spanning disks for a simple theta-curve $\theta$ and $h(\theta, D_1) \geq h(\theta, D_2)$, then $h_\pm(\theta, D_1) \geq h_\pm(\theta, D_2)$.
\end{lem}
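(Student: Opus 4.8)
The plan is to isolate the \emph{signed} quantity $w(\theta,D):=h_+(\theta,D)-h_-(\theta,D)$ and show that it can differ by at most $1$ between the two compatible disks; the hypothesis on total heights then forces the signed heights to compare correctly. By the sign convention of \Cref{fig:disk-sign-conv}, an intersection of $e_0$ with a spanning disk $D$ is positive exactly when the orientation of $e_0$ agrees with the co-orientation of $D$ induced by the boundary orientation $e_+-e_-$ of $\partial D$; hence $w(\theta,D)$ is precisely the algebraic intersection number $e_0\cdot D$. Granting the bound $|w(\theta,D_1)-w(\theta,D_2)|\le 1$, I would finish by elementary arithmetic: writing $a_i=h_+(\theta,D_i)$ and $b_i=h_-(\theta,D_i)$, the hypothesis is $P:=(a_1+b_1)-(a_2+b_2)\ge 0$ and the bound is $|Q|\le 1$ for $Q:=(a_1-b_1)-(a_2-b_2)$. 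Since $a_1-a_2=(P+Q)/2$ and $b_1-b_2=(P-Q)/2$ are integers, $P$ and $Q$ share a parity, so $P\ge 0$ gives $P\ge|Q|$; thus both differences are nonnegative, i.e.\ $h_\pm(\theta,D_1)\ge h_\pm(\theta,D_2)$.

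The geometric heart is the bound on $|w(\theta,D_1)-w(\theta,D_2)|$. Because $D_1$ and $D_2$ are compatible, their interiors are disjoint and they share exactly the boundary circle $C=e_+\cup e_-$, so $\Sigma:=D_1\cup D_2$ is an embedded $2$-sphere; by Jordan--Brouwer it separates $S^3$ into two regions $B$ and $B'$. Since $\theta$ is an embedded theta-graph, the interior of $e_0$ is disjoint from $C$, and $e_0$ meets $\Sigma$ transversely in the disjoint union of the points $e_0\cap D_1$ and $e_0\cap D_2$. I would orient $\Sigma$ as the boundary of $B$ and compare this orientation to the two spanning co-orientations. The key point is that a separating curve on an oriented surface inherits \emph{opposite} boundary orientations from its two sides, whereas both $D_1$ and $D_2$ carry the \emph{same} boundary orientation $e_+-e_-$ on $C$; consequently the spanning co-orientations of $D_1$ and $D_2$ point into opposite regions. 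Tracking the resulting signs crossing by crossing then yields $e_0\cdot\Sigma = \pm\bigl(w(\theta,D_1)-w(\theta,D_2)\bigr)$.

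Finally, I would bound $e_0\cdot\Sigma$. Here $\Sigma$ is a separating sphere and $e_0$ is an \emph{arc} whose endpoints $v_0,v_1$ lie on $\Sigma$; pushing the endpoints slightly along $e_0$ into the adjacent regions, the algebraic intersection number records only whether the two pushed endpoints lie on the same side of $\Sigma$, so $e_0\cdot\Sigma\in\{-1,0,1\}$. This gives $|w(\theta,D_1)-w(\theta,D_2)|\le 1$ and completes the argument. I expect the main obstacle to be precisely the orientation bookkeeping of the previous paragraph — pinning down that the two disks' intersection-sign conventions assemble into a single consistent orientation of $\Sigma$ — together with the care required at the vertices $v_0$ and $v_1$, where $e_0$ actually meets $\Sigma$: one must inspect the local picture on a small linking sphere about each vertex (on which $C$ cuts out a circle through the two boundary-edge directions, with the $e_0$-direction in one complementary disk) to confirm that the endpoint contribution is at most one in absolute value.
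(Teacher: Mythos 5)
Your proposal is correct and takes essentially the same route as the paper's own proof: both resolve the orientation clash on the common boundary to form the oriented separating sphere $\Sigma = D_1 \cup (-D_2)$, observe that the algebraic intersection number of the arc $e_0$ (endpoints excluded) with $\Sigma$ lies in $\{-1,0,1\}$, and then combine this with the hypothesis $h(\theta,D_1) \geq h(\theta,D_2)$ via an integrality/parity argument to conclude $h_\pm(\theta,D_1) \geq h_\pm(\theta,D_2)$. The only difference is presentational: the paper adds the two inequalities and invokes integrality directly, while you phrase the same step as a parity argument for $P$ and $Q$.
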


\begin{proof}
Because $D_1$ and $D_2$ are compatible, $D_1 \cup D_2$ is an embedded sphere dividing $S^3$ into two balls. Note that $D_1 \cup D_2$ is not naturally oriented, because the orientations of $D_1$ and $D_2$ agree on $e_+ \cup e_-$. Let $\Sigma$ denote the oriented sphere $D_1 \cup (-D_2)$.

The net number of intersections of $e_0$ with $\Sigma$, not including $v_0$ or $v_1$, must be $0$, $1$, or $-1$. In symbols,
\begin{equation}\label{eq:net-in-out}
-1 \leq h_+(D_1) - h_-(D_1) - h_+(D_2) + h_-(D_2) \leq 1.
\end{equation}
(Here we have suppressed $\theta$ in the notation.)

By assumption, we have
\begin{equation}\label{eq:height-assumption}
h_+(D_1) + h_-(D_1) \geq h_+(D_2) + h_-(D_2).
\end{equation}
Combining (\ref{eq:height-assumption}) with each inequality in (\ref{eq:net-in-out}), we obtain $2h_+(D_1) - 2h_+(D_2) \geq -1$ and $2h_-(D_1) - 2h_-(D_2) \geq -1$. Therefore, $h_\pm(D_1) \geq h_\pm(D_2)$, as desired.
\end{proof}

The next fact provides opportunities to apply \Cref{lem:compatible-height}.

\begin{lem}\label{lem:main-lem-v1}
If $D$ and $D'$ are spanning disks for a simple theta-curve $\theta$ and $D$ realizes the height of $\theta$, then there is a sequence of spanning disks
$$D' = D_0, D_1, \ldots, D_n = D$$
such that consecutive disks are compatible and the sequence $(h(\theta, D_i))$ is nonincreasing.
\end{lem}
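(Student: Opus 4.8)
The plan is to put $D$ and $D'$ in general position and to induct on the number of circles of intersection between their interiors. First I would normalize near the common boundary: since $\bdy D = \bdy D' = e_+\cup e_-$, a small isotopy of $D'$ supported in a collar of the boundary makes the two interiors disjoint near $e_+\cup e_-$, so that $D\cap D'$ consists of finitely many disjoint circles in the interiors, say $m$ of them. If $m=0$ then $D$ and $D'$ are already compatible, and since $D$ realizes the height we have $h(\theta,D')\ge h(\theta)=h(\theta,D)$, so the two-term sequence $D',D$ already does the job. The inductive step will produce from $D'$ a single new disk $D_1$ that is compatible with $D'$, satisfies $h(\theta,D_1)\le h(\theta,D')$, and meets $D$ in fewer than $m$ circles; we then recurse on the pair $(D_1,D)$ and prepend $D'$.

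To build $D_1$, choose a circle $\gamma$ of $D\cap D'$ that is innermost on $D$, bounding a subdisk $\delta\subset D$ whose interior is disjoint from $D'$, and let $\delta'\subset D'$ be the subdisk cut off by $\gamma$ that does not meet $e_+\cup e_-$. I would then replace $\delta'$ by a copy of $\delta$: concretely, let $D_1$ be the union of $\delta$ with a parallel push-off of $D'\setminus\delta'$ to one side of $D'$. Because $\operatorname{int}\delta$ is disjoint from $D'$ (innermost) and the remainder of $D_1$ is a parallel copy of part of $D'$, the interior of $D_1$ is disjoint from that of $D'$, so $D_1$ and $D'$ are compatible; and since $\delta\subset D$ is pushed off $D$ while the push-off of $D'\setminus\delta'$ meets $D$ only where $D'\setminus\delta'$ did, $D_1$ meets $D$ in strictly fewer circles (we have at least destroyed $\gamma$).

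The main obstacle is controlling the height, i.e.\ showing $h(\theta,D_1)\le h(\theta,D')$. Counting intersections with $e_0$ and using that a small push-off preserves transverse intersections, this inequality reduces to $|e_0\cap\delta'|\ge|e_0\cap\delta|$. The natural way to establish this is to compare $\delta$ and $\delta'$ through the disk $\hat D=(D\setminus\delta)\cup\delta'$, which again has boundary $e_+\cup e_-$. This $\hat D$ need not be embedded, since $\delta'$ may cross $D\setminus\delta$ along the other intersection circles; but those self-intersections are exactly disjoint double-point circles identified transversely in pairs, so \Cref{lem:immersion-lemma} applies and yields $|e_0\cap\hat D|\ge h(\theta)=h(\theta,D)$. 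Since $|e_0\cap\hat D|=h(\theta,D)-|e_0\cap\delta|+|e_0\cap\delta'|$, this is precisely $|e_0\cap\delta'|\ge|e_0\cap\delta|$. Thus the freedom to use an \emph{immersed} comparison disk, via \Cref{lem:immersion-lemma}, together with the minimality of $D$, is what makes the height estimate go through; the remaining bookkeeping is the induction above, whose output is nonincreasing in height because $h(\theta,D')\ge h(\theta,D_1)\ge\cdots\ge h(\theta,D)$.
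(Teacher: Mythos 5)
Your proof is correct and is essentially the paper's argument: induct on the number of intersection components, swap the innermost-on-$D$ subdisk $\delta$ for the corresponding subdisk $\delta' \subset D'$, make the result compatible with $D'$ by a small push-off, and control the height by applying \Cref{lem:immersion-lemma} to the possibly non-embedded disk $(D\setminus\delta)\cup\delta'$ together with the minimality of $D$. The one structural difference is at the boundary: the paper allows arcs of intersection and disposes of them in a separate innermost-arc case (Case 2, where no immersion lemma is needed, since an arc innermost on $D$ with both endpoints on an ``empty'' segment of $e_+\cup e_-$ is automatically innermost on the other disk), whereas you eliminate arcs up front by making the two collars disjoint near $e_+\cup e_-$. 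That normalization is legitimate but deserves a sentence of justification: it works because both disks induce the same (Seifert, i.e.\ zero) framing on the common boundary $e_+\cup e_-$, so the relative winding number of their collar directions vanishes and the finitely many coincidences of direction can be isotoped away; without equal framings the arcs could not all be removed, so this is the point where the fact that both surfaces are disks (rather than arbitrary spanning surfaces) quietly enters. You should also record, as the paper does, that the intersection circles are chosen to miss $e_0$, so that the intersection counts behave additively under the cut-and-paste.
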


\begin{proof}
We may choose $D_1$ such that $(\theta, D_1)$ is isotopic to $(\theta, D')$ and such that $D$ and $D_1$ intersect transversely away from $C = e_+ \cup e_-$. Then the intersections consist of $C$ and a system $\calC_1$ of disjoint circles and arcs embedded properly in both $D$ and $D_1$. We may also require in our choice of $D_1$ that none of the intersection curves meet $e_0$. For each $i \geq 1$, once we have chosen $D_i$ we will form $D_{i+1}$ in such a way that $D_{i+1}$ has fewer total components of intersection with $D$ than does $D_i$. Let $\calC_i$ be the system of intersections between $D_i$ and $D$, not including $C$.

\case{Case 1: $\calC_i$ is empty.}

If $\calC_i$ is empty, then $D$ is compatible with $D_i$, so we set $n = i + 1$ and $D_n = D$. Because $D$ realizes the height of $\theta$, $h(D_i) \geq h(D)$.

\case{Case 2: $\calC_i$ has an arc, but no circles.}

If $\calC_i$ contains an arc, but no circles, we can find an innermost such arc $s$ on $D$. By innermost arc, we mean one for which all other arcs lie on one side of $s$ in $D$. In particular, the endpoints of $s$ divide $C$ into two segments $t$ and $t'$ such that the endpoints of all other curves of $\calC_i$ lie on $t'$. Let $E$ be the disk in $D$ bounded by $S = s \cup t$, and $E'$ the disk in $D_i$ bounded by $S$. Since $t$ contains no endpoints of the arcs in $\calC_i$, $s$ is also innermost in $D_i$. In particular, $E \cap D_i = E' \cap D = S$.

Since $(D - E) \cup E'$ is an embedded disk with boundary $C$, its height is at least $h(D)$. Therefore, $E'$ has at least as many intersections with $e_0$ as does $E$. Now let $D_{i+1}$ be the result of slightly perturbing $(D_i - E') \cup E$ to be compatible with $D_i$. Then $D_{i+1}$ has fewer intersection curves than $D_i$ with $D$, and $h(D_{i+1}) \leq h(D_i)$.

\case{Case 3: $\calC_i$ contains a circle.}

If there is at least one circle, there is an innermost circle $S$ in $D$. Then $S$ bounds a disk $E \subset D$ with $E \cap D_i = S$. Let $E'$ denote the disk in $D_i$ bounded by $S$. In contrast with Case 2, $S$ is not necessarily innermost in $D_i$, so $E' \cap D$ may be more than just $S$.

By \Cref{lem:immersion-lemma}, $e_0$ has at least as many intersections with $(D - E) \cup E'$ as with $D$, so it intersects $E'$ at least as many times as $E$. Therefore, we may proceed as in Case 2. Let $D_{i+1} = (D_i - E') \cup E$, and perturb it so that it is compatible with $D_i$. Then $h(D_{i+1}) \leq h(D_i)$, and we have reduced the number of intersection curves with $D$.

\casesend

This covers all the cases, so we are done.
\end{proof}

\Cref{lem:compatible-height} immediately implies that \Cref{lem:main-lem-v1} can be strengthened as follows.

\begin{lem}\label{lem:main-lem}
If $D$ and $D'$ are spanning disks for a simple theta-curve $\theta$ and $D$ realizes the height of $\theta$, then there is a sequence of spanning disks
$$D' = D_0, D_1, \ldots, D_n = D$$
such that consecutive disks are compatible and the sequences $(h_\pm(\theta, D_i))$ are both nonincreasing.
\end{lem}

We can now prove Theorems \ref{thm:main-thm} and \ref{thm:shift-moves}.

\begin{proof}[Proof of \Cref{thm:main-thm}]
Given a simple theta-curve $\theta$, a spanning disk $D$ realizing the height, and any other spanning disk $D'$, \Cref{lem:main-lem} implies that $D$ has no greater positive or negative height than $D'$. Therefore, $D$ realizes the signed heights, and so,
$$h(\theta) = h(\theta, D) = h_+(\theta, D) + h_-(\theta, D) = h_+(\theta) + h_-(\theta).$$

For a knotoid $k$, we obtain $h(k) = h_+(k) + h_-(k)$ by setting $\theta = \tau(k)$.
\end{proof}

To prove \Cref{thm:shift-moves}, we will use another lemma about compatible spanning disks.

\begin{lem}\label{lem:compatible-shift-moves}
If $D_1$ and $D_2$ are compatible spanning disks for a simple theta-curve $\theta$ and $h(\theta, D_1) = h(\theta, D_2)$, then $\Seq(\theta, D_1)$ and $\Seq(\theta, D_2)$ differ by a shift move.
\end{lem}

\begin{proof}
Let $\Sigma$ be the sphere $D_1 \cup (-D_2)$ as in the proof of \Cref{lem:compatible-height}. Let $B$ be the ball in $S^3$ such that $\Sigma$ is the oriented boundary of $B$, and assume that $e_0 - \{v_0, v_1\}$ ``starts" outside of $B$. In the overall sequence of intersections of $e_0$ with $\Sigma$ (not including the endpoints), the signs of the intersections alternate: The odd- and even-index intersections are negative and positive, respectively. Because $h(\theta, D_1) = h(\theta, D_2)$, the total number of intersections is even. Each odd-even pair of consecutive intersections has type $(+D_2, +D_1)$, $(-D_1, -D_2)$, $(-D_1, +D_1)$, or $(+D_2, -D_2)$. Therefore, $\Seq(\theta, D_2)$ is obtained from $\Seq(\theta, D_1)$ by a left shift move.

If $e_0$ instead starts in the inside of $B$, then $\Seq(\theta, D_2)$ is obtained from $\Seq(\theta, D_1)$ by a right shift move.
\end{proof}

\begin{proof}[Proof of \Cref{thm:shift-moves}]
For a simple theta-curve $\theta$, suppose $D$ and $D'$ are both spanning disks realizing the height of $\theta$. Then in the sequence $(D_i)$ of spanning disks obtained from \Cref{lem:main-lem}, each $D_i$ realizes the height of $\theta$. Therefore, applying \Cref{lem:compatible-shift-moves} to the sequence $(D_i)$ implies that $\Seq(\theta, D)$ and $\Seq(\theta, D')$ are connected among minimal attainable sequences by shift moves.
\end{proof}

\section{Knotoid Operations and Attainable Sequences}\label{sec:properties}

\subsection{Signed Heights under the Basic Involutions}

The signed heights of knotoids behave in straightforward ways under the basic knotoid involutions.

\begin{prop}\label{prop:signed-ht-involutions}
For all $k$, we have
$$h_\pm (k) = h_\pm(\rev(k)) = h_\pm(\mir(k)) = h_\mp(\sym(k)) = h_\mp(\rot(k)).$$
\end{prop}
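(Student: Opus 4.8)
The plan is to realize each basic involution as a bijection on shortcut diagrams and to track its effect on the sign sequence $\Seq(K, a)$. By \Cref{thm:main-thm}, a shortcut diagram realizing $h(k)$ realizes both signed heights simultaneously, so it suffices to start from one such minimal diagram $(K, a)$, apply the involution, and read off how the numbers of $+$ and $-$ entries transform. Since each operation is its own inverse on diagrams, applying it to a minimizer for $k$ produces a competitor realizing the relevant signed heights of the image knotoid, and applying it again gives the opposite inequality; hence each resulting relation between signed heights is an equality.

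The computations hinge on the sign convention of \Cref{fig:shortcut-int-sign}: the sign of an intersection of $K$ with $a$ is the comparison of the ordered frame given by the tangent to $K$ and the tangent to $a$ against the orientation of $S^2$, so it depends on the orientations of $K$, of $a$, and of the surface. Reversing $k$ swaps $v_0$ and $v_1$, reversing the orientation of $K$; because $a$ runs from the tail to the head, its orientation reverses as well, and the two reversals cancel. Thus every sign is preserved while the order of $\Seq$ is reversed, so $h_\pm(\rev(k)) = h_\pm(k)$. The mirror image switches only the over/under data at self-crossings of $K$, leaving the planar immersion, the shortcut, and all three orientations fixed, so $\Seq(\mir(K, a)) = \Seq(K, a)$ and $h_\pm(\mir(k)) = h_\pm(k)$. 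The symmetry reverses the orientation of $S^2$ while fixing the orientations of $K$ and $a$ and the labels $v_0, v_1$; this flips every intersection sign but preserves their order, so $\Seq(\sym(K, a))$ is $\Seq(K, a)$ with $+$ and $-$ interchanged, giving $h_\pm(\sym(k)) = h_\mp(k)$. Finally, rotation is the composite of symmetry and mirror image, which fixes $\Seq$ and then swaps the signs, so $h_\pm(\rot(k)) = h_\mp(k)$.

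The only real obstacle is bookkeeping the orientations carefully, in particular confirming that reverse contributes two cancelling sign flips (leaving signs unchanged) whereas symmetry contributes a single flip (swapping them). One should also check that each involution genuinely carries shortcut diagrams to shortcut diagrams, so that a height-realizer for $k$ maps to an admissible shortcut diagram for the image; combined with \Cref{thm:main-thm}, which lets us move freely between realizers of the unsigned and signed heights, this makes the passage to the signed-height inequalities routine.
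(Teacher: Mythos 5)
Your proof is correct and takes essentially the same approach as the paper, which establishes the diagram-level sign bookkeeping (reversal cancels the two orientation flips, mirroring is invisible to intersection signs, and reversing the orientation of $S^2$ negates every sign) as \Cref{prop:involutions-sequences} and deduces the equalities of signed heights from the involutive bijection on shortcut diagrams. Your invocation of \Cref{thm:main-thm} to pick a simultaneous minimizer is legitimate (it is proved earlier, with no circularity) but superfluous, since comparing minima over all shortcut diagrams via the bijection already gives each inequality and its reverse.
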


More specifically, we can say the following.

\begin{prop}\label{prop:involutions-sequences}
If $A$ is an attainable sign sequence for $k$, then,
\begin{enumerate}
\item $\rev(A)$ is attainable for $\rev(k)$,
\item $A$ is attainable for $\mir(k)$,
\item $-A$ is attainable for $\sym(k)$, and
\item $-A$ is attainable for $\rot(k)$.
\end{enumerate}
where $-A$ is the result of switching all terms $+ \leftrightarrow -$ in $A$ and $\rev(A)$ is the result of reversing the order.
\end{prop}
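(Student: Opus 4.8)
The plan is to prove each of the four statements by fixing a shortcut diagram $(K, a)$ with $\Seq(K, a) = A$, explicitly modifying it to produce a shortcut diagram for the target knotoid, and then tracking how the modification affects two things: the order in which $K$ meets $a$, and the sign of each intersection. Throughout, I would use that the sign of an intersection of $K$ with $a$ is a purely local orientation datum, determined by the tangent vectors to $K$ and to $a$ together with the orientation of the ambient $S^2$ (the convention of \Cref{fig:shortcut-int-sign}); in particular it does \emph{not} depend on the over/under information at the self-crossings of $K$, which the shortcut avoids by definition. Verifying how this sign transforms under each of the three orientation reversals is the substance of the argument.

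For (1), I would keep the same underlying immersion and the same curve $a$, but swap the labels $v_0 \leftrightarrow v_1$, so that $a$ becomes a shortcut for $\rev(k)$ traversed from the new tail to the new head. Following $\rev(K)$ reverses the orientation of \emph{both} the main strand and the shortcut simultaneously. Since reversing both tangent vectors preserves the local orientation in $S^2$, every sign is unchanged, while the order in which the intersections are met along $K$ is reversed; this gives exactly $\rev(A)$. For (2), passing to the mirror image alters only the over/under data and leaves the underlying immersion, the curve $a$, the intersection points, and their signs untouched, so $(\mir(K), a)$ is a legitimate shortcut diagram for $\mir(k)$ with the identical sequence $A$.

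For (3), I would apply an orientation-reversing diffeomorphism of $S^2$ to the entire shortcut diagram. This preserves the orientations of $K$ and $a$ (hence the order of intersections) as well as the over/under data, but reverses the ambient orientation, which flips the sign of every intersection; the outcome is $-A$. Finally, (4) follows by realizing $\rot = \sym \circ \mir$ and composing the effects just computed: mirroring fixes the sequence and symmetry then negates it, so $-A$ is attainable for $\rot(k)$. In each case I would also make the routine check that the modified pair genuinely satisfies the definition of a shortcut diagram (an embedded path between the two endpoints, meeting $K$ transversely and missing the crossings) for the intended knotoid.

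I expect the only genuinely delicate point to be the sign bookkeeping in (1) and (3): confirming against the convention of \Cref{fig:shortcut-int-sign} that reversing the orientations of $K$ and $a$ together leaves each sign invariant (an even number of sign changes), while reversing the ambient orientation alone flips every sign. Everything else reduces to the observation that the shortcut intersections are insensitive to over/under crossing data, which immediately handles the mirror image and reduces the rotation case to a composition.
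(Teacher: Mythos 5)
Your proposal is correct and follows essentially the same argument as the paper: the signs of shortcut intersections are unaffected by over/under data (mirror) and by simultaneously reversing the orientations of $K$ and $a$ (reverse, which only reverses the order), while reversing the ambient orientation of $S^2$ flips every sign (symmetry), with rotation handled by composition. Your write-up simply spells out the sign bookkeeping that the paper's proof states in one sentence.
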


\begin{proof}
Given a shortcut diagram $(K, a)$ for $k$, the signs of the crossings between $K$ and $a$ are not changed by switching the over/under information of $K$ or by simultaneously switching the orientations of $K$ and $a$ (recall that the orientation for a shortcut is determined by the rest of the diagram). However, changing the orientation on $S^2$ changes the signs of the intersections.
\end{proof}

\Cref{prop:signed-ht-involutions}, together with \Cref{thm:main-thm}, has implications for unsigned heights of knotoids, such as for rotatable knotoids, which are addressed in \cite{BBHL18}. A knotoid $k$ is \emph{rotatable} if it equals $\rot(k)$.

\begin{cor}\label{cor:odd-rotatable}
Every rotatable knotoid has even height.
\end{cor}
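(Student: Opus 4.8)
The plan is to combine \Cref{prop:signed-ht-involutions} with \Cref{thm:main-thm}. The key observation is that a rotatable knotoid satisfies $k = \rot(k)$, and \Cref{prop:signed-ht-involutions} tells us precisely how the signed heights transform under rotation: $h_\pm(k) = h_\mp(\rot(k))$. First I would apply this to a rotatable knotoid to deduce that the positive and negative heights must coincide.

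More concretely, suppose $k$ is rotatable, so $k = \rot(k)$. Then the signed heights of $k$ equal those of $\rot(k)$, which gives $h_+(k) = h_+(\rot(k))$ and $h_-(k) = h_-(\rot(k))$. But \Cref{prop:signed-ht-involutions} states that $h_\pm(k) = h_\mp(\rot(k))$, and since $k = \rot(k)$ we may substitute to obtain $h_+(k) = h_-(k)$. (In symbols: $h_+(k) = h_-(\rot(k)) = h_-(k)$, where the first equality is the proposition and the second uses $\rot(k) = k$.) Thus the positive and negative heights of a rotatable knotoid are equal.

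The conclusion then follows immediately from \Cref{thm:main-thm}, which asserts $h(k) = h_+(k) + h_-(k)$. Since $h_+(k) = h_-(k)$, we get $h(k) = 2h_+(k)$, which is even. I would write this out in a couple of lines. There is no genuine obstacle here: the corollary is a direct algebraic consequence of the two cited results, and the only subtlety is correctly tracking the sign flip ($h_\pm \leftrightarrow h_\mp$) under rotation, which must not be confused with the sign-preserving behavior under reversal or mirror image.

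\begin{proof}
Suppose $k$ is rotatable, so that $k = \rot(k)$. By \Cref{prop:signed-ht-involutions}, $h_+(k) = h_-(\rot(k))$. Since $\rot(k) = k$, the right-hand side equals $h_-(k)$, and so $h_+(k) = h_-(k)$. Then \Cref{thm:main-thm} gives
$$h(k) = h_+(k) + h_-(k) = 2h_+(k),$$
which is even.
\end{proof}
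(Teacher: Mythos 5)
Your proof is correct and follows exactly the route the paper intends: the corollary is stated as a consequence of \Cref{prop:signed-ht-involutions} together with \Cref{thm:main-thm}, and your chain $h_+(k) = h_-(\rot(k)) = h_-(k)$ followed by $h(k) = 2h_+(k)$ is precisely that argument. Nothing is missing.
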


\subsection{Multiplication and Concatenation}

In this section we relate the set of attainable sign sequences for a product to the attainable sequences of its factors.

\begin{prop}\label{prop:seq-concatenation-d1}
For any $k_1$ and $k_2$, if $A_1$ is an attainable sign sequence for $k_1$ and $A_2$ is attainable for $k_2$, then the concatenation $A_1A_2$ is attainable for $k_1k_2$.
\end{prop}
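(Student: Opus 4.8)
The plan is to realize the concatenation directly at the level of shortcut diagrams. By the definition of attainability, choose a shortcut diagram $(K_1, a_1)$ representing $k_1$ with $\Seq(K_1, a_1) = A_1$, and a shortcut diagram $(K_2, a_2)$ representing $k_2$ with $\Seq(K_2, a_2) = A_2$. Form the product shortcut diagram $(K_1 K_2, a_1 a_2)$ as in the discussion of knotoid multiplication: delete small disks around the head of $K_1$ and the tail of $K_2$ and glue the resulting boundary circles so that the end of $K_1$ meets the start of $K_2$ and the end of $a_1$ meets the start of $a_2$. This diagram represents $k_1 k_2$ by definition of the monoid product, so it suffices to show that its sign sequence is $A_1 A_2$.

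First I would fix a convenient local picture near the glued endpoints. Since each shortcut meets its main strand transversely and avoids the crossings, we may isotope the diagrams so that inside a small disk around the head of $K_1$ (resp. the tail of $K_2$) the only features are the terminal arc of the main strand and the terminal arc of the shortcut, with no crossings and no $K$--$a$ intersections present. After gluing, the two boundary circles become a single separating circle $\gamma$ on the product sphere, with the $K_1$-part of the diagram on one side and the $K_2$-part on the other. By construction the main strand $K_1 K_2$ crosses $\gamma$ exactly once and the shortcut $a_1 a_2$ crosses $\gamma$ exactly once, at two distinct points; in particular the strand and the shortcut do not meet on $\gamma$.

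It follows that every intersection of $K_1 K_2$ with $a_1 a_2$ lies strictly on one side of $\gamma$, and so is either an intersection of $K_1$ with $a_1$ or an intersection of $K_2$ with $a_2$. Each such intersection keeps its sign, since the sign is a local property of the crossing data, which the gluing does not disturb. Traversing the main strand from $v_0$, the tail of $K_1$, we first run through all of $K_1$, crossing $\gamma$ only at the very end, and so read off exactly the signed intersections giving $A_1$ in order; after crossing $\gamma$ we run through all of $K_2$ to $v_1$, the head of $K_2$, and read off $A_2$ in order. Hence $\Seq(K_1 K_2, a_1 a_2) = A_1 A_2$, and this sequence is attainable for $k_1 k_2$.

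The only place demanding care is the gluing region: the hard part is confirming that the endpoints can indeed be arranged to contribute no intersections and that $\gamma$ is crossed exactly once by each of the strand and the shortcut, so that the sequence splits cleanly as $A_1$ followed by $A_2$ with nothing interleaved across $\gamma$. Everything else in the argument is local and immediate.
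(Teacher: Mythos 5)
Your proof is correct and takes essentially the same approach as the paper, whose entire proof is the one-line observation that the sign sequence of a product of shortcut diagrams is the concatenation of the two original sign sequences. The details you supply about the gluing region --- no intersections near the glued endpoints, and the separating circle $\gamma$ crossed exactly once by the strand and once by the shortcut, so nothing interleaves --- are exactly the content the paper leaves implicit.
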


\begin{proof}
Given shortcut diagrams for $k_1$ and $k_2$, the sign sequence for the product of the diagrams is the concatenation of the sign sequences for the two original diagrams.
\end{proof}

Note that as a particular case of the statement above, if $A$ is any attainable sequence for a knotoid $k$, then the result of appending $+$ or $-$ to either end of $A$ is also attainable for $k$, because on any shortcut diagram for $k$ we can perform a Type I shortcut move around either endpoint.

The next theorem is a converse for \Cref{prop:seq-concatenation-d1}.

\begin{thm}\label{thm:seq-concatenation-d2}
Any minimal attainable sequence for $k_1k_2$ is the concatenation of minimal attainable sequences for $k_1$ and $k_2$.
\end{thm}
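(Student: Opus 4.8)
The plan is to work on the theta-curve side, where the product structure is geometrically transparent. Let $\theta = \tau(k_1k_2) = \tau(k_1)\tau(k_2)$, and recall that $\theta$ is assembled by gluing $\tau(k_1)$ and $\tau(k_2)$ along a thrice-punctured sphere. Crucially, the connecting sphere $P$ separating the two factors meets $e_0$ in a single point and meets $e_+ \cup e_-$ in the two vertices where the gluing occurs. Given a minimal attainable sequence $A$ for $k_1k_2$, choose a spanning disk $D$ realizing it. The idea is to arrange $D$ so that it meets $P$ in a single embedded arc running between the two gluing vertices; then $D$ is cut by $P$ into two disks $D_1 \subset \tau(k_1)$ and $D_2 \subset \tau(k_2)$, each a spanning disk for its respective factor. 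The arc of $D \cap e_0$ entering $P$ passes through the single point $P \cap e_0$, so the intersections of $e_0$ with $D$ split as those in the $\tau(k_1)$ side followed by those in the $\tau(k_2)$ side, giving $\Seq = \Seq(D_1)\,\Seq(D_2)$, a concatenation.

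The next step is to establish that $D_1$ and $D_2$ are themselves \emph{minimal} for $k_1$ and $k_2$. This follows from additivity of height: by \Cref{thm:main-thm} and the product homomorphism property noted in the excerpt, $h(k_1k_2) = h(k_1) + h(k_2)$ (this additivity should be recorded or invoked — it is standard for height and can be proved via the same separating-sphere picture, since a spanning disk for each factor glues to one for the product). We have $h(D_1) + h(D_2) = h(D) = h(k_1k_2) = h(k_1) + h(k_2)$, while $h(D_i) \geq h(k_i)$ for each $i$; hence both inequalities are equalities and each $D_i$ realizes $h(k_i)$. Therefore $\Seq(D_1)$ and $\Seq(D_2)$ are minimal attainable sequences for $k_1$ and $k_2$, and $A = \Seq(D_1)\Seq(D_2)$ is the desired decomposition.

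The main obstacle is the first step: arranging the spanning disk $D$ so that $D \cap P$ is a single clean arc. A priori $D$ can meet the separating sphere $P$ in many circles and arcs. The strategy is an innermost-disk surgery argument in the spirit of \Cref{lem:main-lem-v1}. Make $D$ transverse to $P$; the intersection $D \cap P$ is a system of arcs and circles (plus the constraint that $\bdy D = e_+ \cup e_-$ passes through the two gluing vertices on $P$). Innermost circles on $D$ bound subdisks that can be surgered off by replacing them with subdisks of $P$ (which is disjoint from $e_0$ except at the one point, and that point can be kept away), and since $P$ is unknotted this does not increase the $e_0$-intersection count — here one may need \Cref{lem:immersion-lemma} to handle any resulting self-intersections, exactly as in Case 3 of \Cref{lem:main-lem-v1}. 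Similarly, innermost arcs of $D \cap P$ can be removed by isotopy and surgery until only the single essential arc joining the two vertices remains. I expect this reduction to require care in checking that each surgery (a) does not raise the height and (b) strictly reduces the number of intersection components, so that the process terminates; but the techniques are precisely those already developed for \Cref{lem:main-lem-v1}, so no genuinely new machinery is needed. Finally, the conclusion transfers back to knotoids through the correspondence $\tau$ between shortcut diagrams and spanned theta-curve embeddings.
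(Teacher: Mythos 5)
Your skeleton --- pass to theta-curves, split a minimal disk along the decomposing sphere $P$, then deduce minimality of the two pieces --- is the paper's skeleton, and your final step is fine (though you can avoid invoking height additivity, which the paper derives as a corollary of this very theorem: if some $\Seq(\theta_i, D_i)$ were not minimal, replacing it by a shorter attainable sequence and concatenating via \Cref{prop:seq-concatenation-d1} would contradict minimality of $A$). The genuine gap is in your reduction step. Your surgeries modify the disk $D$, replacing subdisks of $D$ by subdisks of $P$, and you only control the \emph{number} of $e_0$-intersections, not the sign sequence. The theorem is a statement about the given sequence $A = \Seq(\theta, D)$; after your surgeries you hold a different minimal spanning disk whose sequence may be a different minimal sequence, so the argument proves at best that \emph{some} minimal attainable sequence for $k_1k_2$ is a concatenation (essentially \Cref{cor:height-additivity}), not that \emph{every} one is. The natural patch --- since $D$ is minimal and $E' \subset P$ can be chosen to miss the point $P \cap e_0$, \Cref{lem:immersion-lemma} forces the replaced subdisk $E \subset D$ to miss $e_0$, so the intersection points and signs survive one surgery --- stalls immediately: your circle is innermost in $D$, not in $P$, so $(D - E) \cup E'$ is merely immersed wherever $E'$ contains further curves of $D \cap P$, and resolving those self-intersections as in \Cref{lem:immersion-lemma} loses the signs (the paper notes explicitly that the resolution ``might not respect orientation''), so the induction cannot be continued while preserving $\Seq$. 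You flag termination and height as the risks to check; the real risk is the sequence itself.

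The paper resolves this tension with the dual move: it keeps $D$ fixed throughout --- so $\Seq(\theta, D)$ is literally unchanged --- and instead isotopes the decomposing sphere, pushing the subdisk $\Sigma_2$ of $\Sigma$ across the ball $B$ bounded by $\Sigma_2 \cup E$ and past the innermost disk $E \subset D$; here \Cref{lem:immersion-lemma} enters only as a counting device, and every surface stays embedded. But this creates the problem your write-up never confronts: the isotoped sphere decomposes $\theta$ as the \emph{same} product $\theta_1\theta_2$ only if $B$ meets $e_0$ in nothing or an unknotted arc; a knotted arc would transfer a knot-type factor from one factor to the other. To rule this out, the paper first reduces to the case where $\theta_1$ and $\theta_2$ have no knot-type factors, which is exactly the content of \Cref{lem:knot-type-lem} (itself resting on \Cref{lem:extra-spheres}) --- a substantial portion of the argument with no counterpart in your proposal. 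So: correct frame and correct concluding step, but the surgery step as designed cannot yield the ``any minimal sequence'' statement, and the repair requires both switching which surface moves and the knot-type-factor reduction.
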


We will prove \Cref{thm:seq-concatenation-d2} using a modification of original argument appearing in \cite{Tur12} for \Cref{cor:height-additivity} below, which is an immediate corollary.

\begin{cor}
For two knotoids $k_1$ and $k_2$, $h_\pm(k_1k_2) = h_\pm(k_1) + h_\pm(k_2)$.
\end{cor}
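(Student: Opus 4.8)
The plan is to derive the equality from \Cref{thm:seq-concatenation-d2}, together with \Cref{prop:seq-concatenation-d1} and \Cref{thm:main-thm}, by establishing the two opposite inequalities. The observation I would record first is that a \emph{minimal} attainable sequence $A$ for a knotoid $k$ simultaneously realizes both signed heights, i.e. $h_+(A) = h_+(k)$ and $h_-(A) = h_-(k)$. This is precisely what the proof of \Cref{thm:main-thm} provides: the spanning disk realizing the height produced by \Cref{lem:main-lem} has no greater positive or negative height than any other disk, so its sign sequence attains $h_+(k)$ and $h_-(k)$ at the same time.

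For the upper bound $h_\pm(k_1 k_2) \leq h_\pm(k_1) + h_\pm(k_2)$, I would choose minimal attainable sequences $A_1$ for $k_1$ and $A_2$ for $k_2$. By the observation above, $h_\pm(A_i) = h_\pm(k_i)$. By \Cref{prop:seq-concatenation-d1}, the concatenation $A_1 A_2$ is attainable for $k_1 k_2$, and simply counting signs across the concatenation gives $h_\pm(A_1 A_2) = h_\pm(A_1) + h_\pm(A_2) = h_\pm(k_1) + h_\pm(k_2)$. Since $h_\pm(k_1 k_2)$ is a minimum taken over all attainable sequences, the upper bound follows.

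For the lower bound $h_\pm(k_1 k_2) \geq h_\pm(k_1) + h_\pm(k_2)$, I would take a minimal attainable sequence $A$ for $k_1 k_2$, so that $h_\pm(A) = h_\pm(k_1 k_2)$. By \Cref{thm:seq-concatenation-d2}, $A = A_1 A_2$ where each $A_i$ is a minimal attainable sequence for $k_i$, whence $h_\pm(A_i) = h_\pm(k_i)$. Counting signs again yields $h_\pm(k_1 k_2) = h_\pm(A) = h_\pm(A_1) + h_\pm(A_2) = h_\pm(k_1) + h_\pm(k_2)$. Combining the two bounds gives the claimed equality.

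I do not anticipate a genuine obstacle, since all the substantive content lives in \Cref{thm:seq-concatenation-d2}. The one point requiring care is the initial observation that minimality of a sequence forces it to realize both signed heights; it is needed to pass from the unsigned minimality appearing in the hypotheses to the signed counts used in both inequalities.
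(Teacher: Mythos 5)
Your proof is correct and matches the paper's intent: the paper states this result as an immediate corollary of \Cref{thm:seq-concatenation-d2}, and your argument is exactly the filling-in of that implication, including the key observation (via \Cref{thm:main-thm}) that a minimal attainable sequence realizes both signed heights simultaneously. Note only that your lower-bound computation already yields equality on its own, so the separate upper-bound step via \Cref{prop:seq-concatenation-d1} is harmless but redundant.
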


\begin{cor}[Turaev \cite{Tur12}]\label{cor:height-additivity}
For two knotoids $k_1$ and $k_2$, $h(k_1k_2) = h(k_1) + h(k_2)$.
\end{cor}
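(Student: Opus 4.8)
The plan is to deduce \Cref{cor:height-additivity} directly from \Cref{thm:seq-concatenation-d2}, treating the statement as a bookkeeping consequence about the lengths of sign sequences. The key observation is that for any shortcut diagram the height equals the length of its sign sequence, so a minimal attainable sequence for a knotoid $k$ has length exactly $h(k)$. With this in mind, the corollary splits into two inequalities: one routine, and one that is precisely the content of \Cref{thm:seq-concatenation-d2}.

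For the inequality $h(k_1 k_2) \leq h(k_1) + h(k_2)$, I would take shortcut diagrams realizing the heights of $k_1$ and $k_2$ and form their product. By \Cref{prop:seq-concatenation-d1} the resulting shortcut diagram for $k_1 k_2$ has sign sequence the concatenation of the two, hence height $h(k_1) + h(k_2)$; since $h(k_1 k_2)$ is a minimum over all shortcut diagrams, the inequality follows. This direction uses none of the hard machinery.

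For the reverse inequality---which in fact yields equality---I would begin with a shortcut diagram for $k_1 k_2$ realizing its height, so that its sign sequence $A$ is minimal attainable with $|A| = h(k_1 k_2)$. \Cref{thm:seq-concatenation-d2} then writes $A = A_1 A_2$, where each $A_i$ is a \emph{minimal} attainable sequence for $k_i$; minimality forces $|A_i| = h(k_i)$. Reading off lengths gives $h(k_1 k_2) = |A| = |A_1| + |A_2| = h(k_1) + h(k_2)$, which already delivers equality. Equivalently, one may route through the signed-height identity $h_\pm(k_1 k_2) = h_\pm(k_1) + h_\pm(k_2)$ and sum the two using \Cref{thm:main-thm}, since $h = h_+ + h_-$; this is the form I would record if the signed statement is taken as given.

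At the level of this corollary there is essentially no obstacle: everything reduces to counting entries in a sign sequence once \Cref{thm:seq-concatenation-d2} is in hand. The genuine difficulty lies entirely in \Cref{thm:seq-concatenation-d2} itself---showing that a height-realizing sequence for the product cannot be ``mixed'' across the connect-sum sphere and must split cleanly into height-realizing pieces for the two factors. That is the step I expect to require real work, presumably a disk-surgery and innermost-curve argument against the separating sphere, in the spirit of \Cref{lem:main-lem-v1}, whereas the additivity statement above is a formal consequence.
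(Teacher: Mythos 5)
Your proof is correct and follows the same route as the paper, which states \Cref{cor:height-additivity} as an immediate consequence of \Cref{thm:seq-concatenation-d2}: a minimal attainable sequence for $k_1k_2$ has length $h(k_1k_2)$ and decomposes as a concatenation of minimal attainable sequences of lengths $h(k_1)$ and $h(k_2)$. Your observation that the length-counting argument alone already yields equality (making the easy $\leq$ direction via \Cref{prop:seq-concatenation-d1} redundant) is also accurate.
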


We will treat knotoids as simple theta-curves, and use the following lemmas:

\begin{lem}\label{lem:extra-spheres}
Suppose that $\theta$ is a simple theta-curve and $\Delta$ is a compact oriented surface (not necessarily connected) embedded in $S^3$ such that (a) $\bdy\Delta = e_+ \cup e_-$, (b) $e_0$ intersects $\Delta$ transversely, and (c) The component of $\Delta$ with boundary is a disk. Then the sequence $\Seq(\theta, \Delta)$ of signs of intersections of $e_0$ with $\Delta$ is attainable for $\theta$.
\end{lem}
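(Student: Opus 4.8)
My plan is to exploit the fact that, since $\Delta$ is \emph{embedded} and its only boundary component is a disk, we may write $\Delta = D_0 \sqcup \Sigma_1 \sqcup \cdots \sqcup \Sigma_m$, where $D_0$ is an honest spanning disk (embedded, with $\bdy D_0 = e_+ \cup e_-$, meeting $e_0$ transversely, so $\Seq(\theta, D_0)$ is already attainable) and each $\Sigma_j$ is a connected closed oriented embedded surface disjoint from the rest. The whole content is then to account for the extra intersections of $e_0$ with the $\Sigma_j$ and show that together they only insert ``trivial'' data into the sign sequence. First I would record that every closed orientable surface in $S^3$ is null-homologous and hence separating, so the $\Sigma_j$ cut $S^3$ into $m+1$ complementary regions whose adjacency graph (regions as vertices, surfaces as edges) is connected with $m$ edges, i.e.\ a tree. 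Let $R_0$ be the region containing $\mathrm{int}(D_0)$; since $C = e_+ \cup e_-$ is disjoint from every $\Sigma_j$, the curve $C$ and both endpoints $v_0, v_1$ also lie in $R_0$.

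The key structural observation is that the oriented arc $e_0$ traces a closed walk on this tree that begins and ends at $R_0$: its crossings with the $\Sigma_j$ are exactly the edge-traversals, while its (counted) crossings with $D_0$ all occur while $e_0$ sits in $R_0$. Because a closed walk on a tree traverses each edge equally often in each direction, and any excursion that leaves a region must return through the same edge, the signs contributed by the surface-crossings organize into a properly nested system of matched pairs: each excursion away from a region opens with a crossing of some $\Sigma_j$ and closes with the crossing of the same $\Sigma_j$ in the opposite direction, enclosing (recursively) the crossings of the deeper excursions. Here I would verify the sign bookkeeping, namely that crossing a fixed oriented $\Sigma_j$ in the two directions carries opposite signs, so each matched pair is a cancelling pair $(\epsilon, -\epsilon)$. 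Consequently $\Seq(\theta, \Delta)$ is obtained from $\Seq(\theta, D_0)$ by repeatedly inserting adjacent cancelling pairs, nested according to the tree.

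To finish, I would realize each such insertion geometrically by a \emph{finger move} on the disk: pushing a thin tongue of the disk across $e_0$ and back creates exactly two new, adjacent intersections of opposite sign (the two faces of the fold carry opposite normals), while preserving embeddedness and the disk type. Performing these finger moves in the nested order dictated by the tree---outermost pairs first, with each subsequent tongue emanating from the body of the previous one so that it reaches across $e_0$ at the correct position and enters the appropriate deeper region---produces a genuine spanning disk $D$ with $\Seq(\theta, D) = \Seq(\theta, \Delta)$. Since $D$ spans $\theta$, this exhibits $\Seq(\theta, \Delta)$ as attainable.

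I expect the main obstacle to be the structural heart of the argument: proving that the surface-crossings assemble into a nested system of cancelling pairs. Setting up the tree of complementary regions and tracking which region $e_0$ occupies---so that the $D_0$-crossings stay at ``depth zero'' and each excursion is bracketed by oppositely-signed crossings of a single surface---is where the real content lies. The realization by nested finger moves is then careful but routine, the only subtlety being to order the moves so that the disk remains embedded throughout.
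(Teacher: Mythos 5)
Your proof is correct, but it takes a genuinely different route from the paper's. The paper's proof performs surgery on $\Delta$ itself, in two stages: first (the general case) each closed component that is not a ``trivial'' sphere is replaced, innermost-first, by a collection of trivial spheres --- one around each segment of $e_0$ lying inside it, oriented so that the sign sequence is unchanged; then (the all-spheres case) each outermost sphere is tubed into the disk component along an arc on the appropriate side, merging everything into a single embedded spanning disk whose intersection points with $e_0$ are literally the same as those of $\Delta$. You instead never touch the closed components: you analyze the tree of complementary regions to show that their crossings contribute only nested cancelling pairs wrapped around the subsequence $\Seq(\theta, D_0)$, and then you re-create those pairs by finger moves on the honest spanning disk $D_0$. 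Both arguments are sound; the sign bookkeeping in your tree analysis (each excursion across an oriented separating surface opens and closes with opposite signs) and the sign control in the finger move (determined by which side of the disk the finger emanates from) both check out. Your approach has two advantages: it handles arbitrary-genus closed components uniformly, with no case split and no genus-reduction step, and it proves a sharper structural statement --- that $\Seq(\theta,\Delta)$ differs from the attainable sequence $\Seq(\theta,D_0)$ by nested insertions of adjacent cancelling pairs, and that attainability is closed under such insertions (the spanning-disk analogue of a Type II shortcut move). One simplification is available to you: since the complement of $D \cup \theta$ in $S^3$ remains connected after each finger move, the insertions can be performed in any order, so the careful outermost-first nesting of tongues you worry about at the end is not actually needed. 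The paper's approach, for its part, keeps the intersection points fixed rather than rebuilding them, and its internal case structure is reused later --- the proof of \Cref{lem:knot-type-lem} cites Case 1 of the paper's argument explicitly --- so your proof, if substituted, would require a small rewording there.
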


\begin{proof}
Let $D$ be the disk component of $\Delta$.

\case{Case 1: Every closed component of $\Delta$ is a sphere.}

If $\Delta = D$, then of course $\Seq(\theta, \Delta) = \Seq(\theta, D)$ is attainable.

If $\Delta$ is not connected, some spherical component $H$ of $\Delta$ must be ``outermost" in the sense that no other sphere separates it from $D$. If $D$ is on the positive side of $H$, then we may choose an embedded path from $H$ to the positive side of $D$ such that the path does not otherwise intersect $\Delta$ or $e_0$. Then we can incorporate $H$ into $D$ in an orientation-respecting way by adding an annulus to connect $H$ to $D$ and deleting disks in $D$ and $H$ around the path's endpoints. If $D$ is on the negative side of $H$, we do the same but with the negative side of $D$. Doing this several times replaces $\Delta$ with a spanning disk and realizes the sign sequence as attainable.

\case{Case 2: General Case.}

Each closed component of $\Delta$ separates $S^3$, and as in Case $1$ we can consider $D$ to be ``outside" of every other component, regardless of their orientations. Call a spherical component of $\Delta$ \emph{trivial} if it intersects $e_0$ twice and $e_0$ is unknotted inside the sphere. Let $N$ be the set of closed components that are not trivial spheres. If $N$ is nonempty, consider an innermost element $G$ of $N$. Inside of $G$ are some number of segments of $e_0$. Some of these segments may have trivial spheres attached. Let us delete $G$ and replace it with several trivial spheres: One sphere is added for each segment of $e_0$ inside $G$, surrounding the segment and all preexisting trivial spheres on that segment. The new spheres may be oriented appropriately so that we have not changed $\Seq(\theta, \Delta)$. Repeating this process renders $N$ empty and reduces us to Case 1.
\end{proof}

\begin{lem}\label{lem:knot-type-lem}
For any simple theta-curve $\theta$ and knot-type theta-curve $\kappa^\bullet$, a sign sequence is attainable for $\kappa^\bullet\theta$ if and only if it is attainable for $\theta$.
\end{lem}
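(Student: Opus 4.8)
The plan is to prove each direction separately, treating $\kappa^\bullet$ and $\theta$ as theta-curves and their product as the theta-curve $\kappa^\bullet\theta$. Write $S$ for the sphere along which the two factors are glued; it meets the theta-curve transversely in one point on each edge and separates $S^3$ into a ball $B_\kappa$ containing the $\kappa^\bullet$ part and a ball $B_\theta$ containing the $\theta$ part. Since $e_0$ crosses $S$ exactly once, every attainable sequence $A$ for $\kappa^\bullet\theta$ splits as a concatenation $A = A_\kappa A_\theta$, where $A_\kappa$ records the intersections lying in $B_\kappa$ (read first, since the tail is in $B_\kappa$) and $A_\theta$ those in $B_\theta$.

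The forward direction is immediate. Because $\kappa^\bullet$ is knot-type it has height $0$, so the empty sequence is attainable for $\kappa^\bullet$; given any spanning disk for $\theta$ realizing $A$, \Cref{prop:seq-concatenation-d1} then shows that the concatenation $\emptyset\,A = A$ is attainable for $\kappa^\bullet\theta$. This is the only place the hypothesis that $\kappa^\bullet$ is knot-type enters; for the converse no assumption on $\kappa^\bullet$ is needed.

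For the converse, I would start with a spanning disk $D'$ for $\kappa^\bullet\theta$ with $\Seq(\kappa^\bullet\theta, D') = A$, arranged to meet $S$ transversely. Then $D' \cap S$ consists of one arc $\alpha$ joining the two points of $\bdy D' \cap S$, together with finitely many circles. First I would remove the circles by innermost-disk surgery on $S$: an innermost circle bounds a disk on $S$ avoiding the edge points, and surgering $D'$ along it reduces $\lvert D' \cap S\rvert$ without altering the intersections with $e_0$, at the cost of possibly splitting off closed surface components, which \Cref{lem:extra-spheres} permits us to carry along. Once $D' \cap S = \alpha$, I cut $D'$ along $\alpha$ and discard the piece in $B_\kappa$; the remaining piece is a disk in $B_\theta$ whose boundary is the $B_\theta$-portions of $e_+$ and $e_-$ together with $\alpha$. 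Now reconstitute $\theta = \mathbf{1}\cdot\theta$ by gluing a trivial ball $B_0$ back in place of $B_\kappa$. The arc $\alpha$ is inessential in $S$ minus the point of $e_0$, so it can be capped inside $B_0$ by a disk disjoint from the $e_0$-arc of $B_0$, since the trivial factor has a height-$0$ spanning disk. The result is a surface $\Delta$ for $\theta$ with $\bdy\Delta = e_+ \cup e_-$ whose disk component realizes exactly $A_\theta$ and whose closed components contribute no new intersections; \Cref{lem:extra-spheres} then certifies that $A_\theta$ is attainable for $\theta$.

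Finally, I would prepend $A_\kappa$. By the remark following \Cref{prop:seq-concatenation-d1}, appending a single sign at the tail of an attainable sequence yields an attainable sequence via a Type I shortcut move; iterating this at the tail vertex of $\theta$ builds up $A_\kappa$ in front of $A_\theta$, so $A = A_\kappa A_\theta$ is attainable for $\theta$, completing the converse. The main obstacle is the cut-and-cap step along $S$: arranging that the surface produced on the $\theta$ side has sign sequence precisely $A_\theta$, which requires careful bookkeeping of the circles of $D' \cap S$ (handled by surgery and \Cref{lem:extra-spheres}) and the triviality of the capping arc $\alpha$ inside $B_0$.
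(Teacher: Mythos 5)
Your forward direction and the final prepending trick are fine, but the converse has a genuine gap at the circle-removal step, and it is exactly the point where the whole difficulty of the lemma lives. You claim that an innermost circle of $D' \cap S$ bounds a disk on $S$ avoiding the edge points, so that surgery removes it without altering intersections with $e_0$. That is false in general: since the circles are disjoint from the arc $\alpha$ (which joins the $e_+$ and $e_-$ punctures), each circle either bounds a disk in $S$ free of all three punctures, or else encircles the puncture $e_0 \cap S$. Circles of the second kind cannot be surgered harmlessly: compressing $D'$ along the disk such a circle bounds in $S$ glues in two parallel copies of that disk, one on each side of $S$, and each copy meets $e_0$ once, so the surgery inserts a new $\pm$ pair of intersections (one in $B_\kappa$, one in $B_\theta$) and in particular corrupts the $\theta$-side sequence you are trying to realize as $A_\theta$. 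Such essential circles genuinely occur -- they arise when the spanning disk wraps around the knotted arc $\kappa$ -- so the stage ``once $D' \cap S = \alpha$'' may be unreachable, and your construction stalls. Your own remark that the converse ``needs no assumption on $\kappa^\bullet$'' is the warning sign: the knottedness of $\kappa$ is precisely what forces these essential intersection circles, and an argument that never confronts them cannot be complete.

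This is where the paper's proof does its real work, by a different mechanism than surgery. It chooses a ball $B$ meeting the theta-curve only in $e_0$ (possible exactly because $\kappa^\bullet$ is knot-type, so the factor is concentrated in a knotted arc of $e_0$), first pushes all intersections $e_0 \cap D$ out of $B$, and surgers only the circles of $D \cap \bdy B$ of winding number zero around $\bdy B$ minus its two punctures. The winding-number-$\pm 1$ circles are \emph{not} removed; instead the entire interior of $B$ is excised and replaced by a standard ball containing an unknotted strand together with disjoint unknotted annuli joining the $\pm 1$ circles in pairs, where the pairing must be chosen non-crossing, within components of $D \cap B$, and with opposite winding numbers -- this combinatorial pairing, and the check that the resulting surface is still a disk plus spheres (so that \Cref{lem:extra-spheres} applies), is the heart of the proof. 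Your decomposition $A = A_\kappa A_\theta$ and the regeneration of $A_\kappa$ by Type I shortcut moves are a legitimate simplification of the bookkeeping at the two ends, but they cannot substitute for handling the essential circles; to repair your argument you would need to import something equivalent to the paper's winding-number analysis and ball-replacement construction.
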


\begin{proof}
It is immediate that any attainable sequence for $\theta$ is attainable for $\kappa^\bullet\theta$.

For the other direction, suppose we have a spanning disk $D$ for $\kappa^\bullet\theta$. Pick a ball $B$ such that (a) $B$ intersects $\kappa^\bullet\theta$ only on $e_0$, (b) $\bdy B$ intersects $D$ transversely, and (c) the restriction of $e_0$ to $B$ is $\kappa$ in the form of a $1$-tangle. If $e_0$ intersects $D$ inside $B$, we may push these intersections to the outside: Choose a subinterval of $e_0 \cap B$ containing the intersections with $D$ as well as one of the two endpoints, then delete from $B$ a regular neighborhood of that interval. Therefore we may choose $B$ so that $e_0 \cap D \cap B$ is empty.

Let $b_0$ and $b_1$ be the intersections of $e_0$ with $\bdy B$, assigned such that $e_0$ is oriented from $b_0$ to $b_1$. Each component of $D \cap B$ is a genus-$0$ surface (with boundary) properly embedded in $B$, and each component of $D \cap \bdy B$ is an oriented circle with winding number $0$, $1$, or $-1$ around $\bdy B - \{b_0, b_1\}$. Suppose there is at least one circle with winding number $0$. Then there is an innermost such circle. We may cut $D$ along this circle and fill in two disks on either side of $\bdy B$ to obtain a new spanning surface consisting of a disk and a sphere. Doing these repeatedly, we obtain a surface $\Delta$ as in \Cref{lem:extra-spheres} (specifically, as in Case 1) such that $\Seq(\kappa^\bullet\theta, \Delta) = \Seq(\kappa^\bullet\theta, D)$ and and such that every component of $\Delta \cap \bdy B$ has winding number $\pm 1$. There may now be some spherical components of $\Delta$ contained entirely within $B$, but they do not intersect $e_0$.

Now, let us label the components of $\Delta \cap B$ as $E_1, \ldots, E_n$. Each separates $B$ into two regions, and each is disjoint from the others and from $e_0$. The components of $\Delta \cap \bdy B$, all concentric circles, have an order based on how they are arranged from $b_0$ to $b_1$ and so can be indexed $1, \ldots, m$. For each $i$, let $s(i)$ be the winding number of the $i$\supscr{th} circle and let $c(i)$ be the index of its component in $\Delta \cap B$. For each $j \in \{1, \ldots, n\}$, the sum of the $s(i)$ over all $i$ with $c(i) = j$ must be $0$, because $e_0$ does not intersect $E_j$. For similar reasons, for all $j$ and all $w_1$ and $w_2$ with $w_1 < w_2$ and $c(w_1) = c(w_2) \neq j$, the sum of the $s(i)$ over all $i$ with $c(i) = j$ and $w_1 < i < w_2$ is also $0$.

Now, to show that $\Seq(\kappa^\bullet\theta, \Delta)$ is attainable for $\theta$, we will create a spanning of $\theta$ by deleting and replacing the interior of $B$. Let $B'$ be a standard $3$-ball, and choose an orientation-respecting identification $\bdy B' \cong \bdy B$. Let $e_0'$ be an unknotted strand properly embedded in $B'$ from $b_0$ to $b_1$. Now consider a partition of the components of $\Delta \cap \bdy B$ into pairs such that (a) paired components have opposite winding numbers, (b) paired components come from the same component of $\Delta \cap B$, and (c) for $i_1 < i_2 < i_3 < i_4$, we do not have $i_1$ paired to $i_3$ and $i_2$ to $i_4$. (A simple induction argument shows this is possible.) Now, we connect each pair of components with an unknotted annulus disjoint from $e_0'$ and disjoint from the other annuli. See \Cref{fig:filling-sphere}.

\begin{figure}
\includegraphics[scale=0.4]{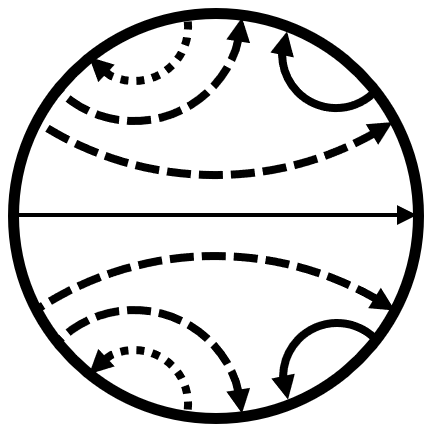}
\caption{A cross section of $B'$. We can obtain $B'$ by rotating around the center axis, which represents $e_0'$. In this example, $\Delta \cap B$ had three components, indicated by the three types of curved arrow.}
\label{fig:filling-sphere}
\end{figure}

Now we glue $B'$ along $\bdy B$ to the closure of the complement of $B$ to obtain a spanning surface $\Delta'$ for $\theta$, with $\Seq(\theta, \Delta') = \Seq(\kappa^\bullet\theta, D)$. Because we replaced each component of $\Delta \cap B$ with several annuli, we have not created any higher-genus components by replacing $\Delta$ with $\Delta'$. Therefore $\Delta'$ is a union of a disk with spheres, and so by \Cref{lem:extra-spheres}, $\Seq(\kappa^\bullet\theta, D)$ is attainable for $\theta$.
\end{proof}

\begin{proof}[Proof of \Cref{thm:seq-concatenation-d2}]
Given a spanning disk $D$ for a product $\theta_1\theta_2$ realizing its height, we wish for there to be an embedded sphere decomposing $(\theta_1\theta_2, D)$ as a product of spannings of $\theta_1$ and $\theta_2$. Such a sphere does not exist in general, but we will assume that $\theta_1$ and $\theta_2$ have no knot-type factors, and this will be sufficient by \Cref{lem:knot-type-lem}.

By construction, there is a sphere $\Sigma$ decomposing $\theta_1\theta_2$ as a product of $\theta_1$ and $\theta_2$. Necessarily, the two vertices of $\theta_1\theta_2$ lie on opposite sides of $\Sigma$, and each edge intersects $\Sigma$ once transversely. We may assume $D$ to intersect $\Sigma$ transversely as well. Then the intersection of $D$ and $\Sigma$ consists of a line segment and possibly several circles. If the number of circles is $0$, then $\Sigma$ cuts $D$ into two disks, which are spanning disks for $\theta_1$ and $\theta_2$, so we are done.

If there are some circles, we may pick one which is innermost in $D$. This bounds a disk $E$ in $D$ which does not otherwise intersect $D$ or $\Sigma$. It also separates $\Sigma$ into two disks $\Sigma_1$ and $\Sigma_2$. Since $E$ sits on one side of $\Sigma$ and cuts that side into two parts, one part contains a vertex and the other does not. We may assign the labels $\Sigma_1$ and $\Sigma_2$ in such a way that $\Sigma_1 \cup E$ is a sphere that separates the vertices of $\theta$ and $\Sigma_2 \cup E$ is a sphere with both vertices on one side. Let $B$ denote the ball with boundary $\Sigma_2 \cup E$ that doesn't contain the vertices.

Each of $e_+$ and $e_-$ must have its one intersection with $\Sigma$ on $\Sigma_1$, as it cannot intersect $E$. The intersection of $e_0$ with $\Sigma$ may be on either $\Sigma_1$ or $\Sigma_2$, but regardless, $e_0$ cannot intersect $E$ more times than it intersects $\Sigma_2$, by \Cref{lem:immersion-lemma}. Therefore, $e_0$ either intersects $\Sigma_1$ once and not $\Sigma_2$ or $E$, or it intersects $\Sigma_2$ and $E$ once each but not $\Sigma_1$. In the latter case, there is a $1$-tangle inside of $B$, but by our assumption of no knot-type factors, the tangle is unknotted.

Let $\Sigma'$ be the sphere formed by pushing $\Sigma_2$ through $B$ and past $E$, so $\Sigma'$ is a slight perturbation of $\Sigma_1 \cup E$ and there are fewer circular intersections of $D$ with $\Sigma'$ than with $\Sigma$. Since $B$ either does not intersect $e_0$ or contains only an unknotted segment between $\Sigma_2$ and $E$, $(\theta, \Sigma')$ is isotopic to $(\theta, \Sigma)$, so $\Sigma'$ still decomposes $\theta$ as $\theta_1\theta_2$.

Repeating the above steps yields a sphere intersecting $e_0$ once and $D$ in only an interval, so it decomposes $(\theta_1\theta_2, D)$ as a product of $(\theta_1, D_1)$ and $(\theta_2, D_2)$ as desired. Then $\Seq(\theta_1\theta_2, D)$ is the concatenation of $\Seq(\theta_1, D_1)$ with $\Seq(\theta_2, D_2)$. Since $\Seq(\theta_1\theta_2, D)$ is minimal, each  $\Seq(\theta_i, D_i)$ is also minimal.
\end{proof}

\subsection{Signed Heights under Lifting}

For a shortcut diagram $(K, a)$ of a knotoid $k$, there are $n$ lifts of $a$ to a shortcut for $K/n$. The total number of positive/negative intersections of $K/n$ with all such lifts is equal to $h_\pm(K, a)$. Of course, that amount must be at least $n$ times the minimal number of positive/negative intersections with each of the $n$ lifts of $a$.

\begin{prop}
For all $k$ and $n$, $nh_\pm(k/n) \leq h_\pm(k)$.
\end{prop}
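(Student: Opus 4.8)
The plan is to make precise the counting sketched just before the statement, using the branched covering map that defines $K/n$. Fix an arbitrary shortcut diagram $(K, a)$ for $k$, and let $p_n \colon S^2 \to S^2$ be the $n$-fold cyclic cover branched over $v_0$ and $v_1$ used in \Cref{sec:lifting}. Because the cover is totally branched over the two endpoints, every preimage of $v_0$ (resp.\ $v_1$) collapses to a single point $\tilde v_0$ (resp.\ $\tilde v_1$). Parametrize $K$ as an immersion $\gamma \colon I \to S^2$ running from $v_0$ to $v_1$, and let $\tilde\gamma \colon I \to S^2$ be the path-lift with $p_n \circ \tilde\gamma = \gamma$ whose image is $K/n$.

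Next I would understand the shortcut upstairs. Since $a$ is an embedded arc from $v_0$ to $v_1$ meeting $K$ transversely and avoiding its crossings, the preimage $p_n^{-1}(a)$ is a union of $n$ disjoint embedded arcs $\tilde a_1, \dots, \tilde a_n$, each running from $\tilde v_0$ to $\tilde v_1$. The crossings of $K/n$ project to crossings of $K$, so each $\tilde a_i$ avoids them and meets $K/n$ transversely; hence each $\tilde a_i$ is a shortcut for $K/n$ and $(K/n, \tilde a_i)$ is a shortcut diagram for $k/n$. In particular $h_\pm(K/n, \tilde a_i) \ge h_\pm(k/n)$ for every $i$.

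The heart of the argument is the identity $h_\pm(K, a) = \sum_{i=1}^n h_\pm(K/n, \tilde a_i)$. Since $p_n \circ \tilde\gamma = \gamma$, a parameter $s \in I$ satisfies $\tilde\gamma(s) \in p_n^{-1}(a)$ if and only if $\gamma(s) \in a$, so the intersections of $K/n$ with $\bigsqcup_i \tilde a_i$ are indexed by exactly the same parameters as the intersections of $K$ with $a$, and each such $\tilde\gamma(s)$ lies on a unique $\tilde a_i$. As $p_n$ is an orientation-preserving local diffeomorphism away from the branch points, it preserves the sign of each intersection, so this correspondence matches $h_\pm(K, a)$ with the total over $i$ of $h_\pm(K/n, \tilde a_i)$. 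Combining with the previous paragraph,
$$h_\pm(K, a) = \sum_{i=1}^n h_\pm(K/n, \tilde a_i) \ge n\, h_\pm(k/n),$$
and taking the minimum of the left-hand side over all shortcut diagrams $(K, a)$ for $k$ yields $h_\pm(k) \ge n\, h_\pm(k/n)$.

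The step I expect to need the most care is the sign-preservation claim in that identity: one must confirm that the orientation of $S^2$, the orientation induced on each $\tilde a_i$, and the convention of \Cref{fig:shortcut-int-sign} all lift consistently, so that positive intersections downstairs lift to positive intersections upstairs. This reduces to the branched cover being orientation-preserving, which holds; the remaining assertions are routine covering-space bookkeeping.
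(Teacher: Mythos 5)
Your proof is correct and follows essentially the same route as the paper: the paper's own argument (given in the paragraph preceding the proposition) lifts a fixed shortcut $a$ to its $n$ preimage arcs in the branched cover, observes that the signed intersections of $K/n$ with all $n$ lifts total exactly $h_\pm(K,a)$, and bounds each lift's count below by $h_\pm(k/n)$. You have merely filled in the covering-space bookkeeping (embeddedness of the lifted arcs, avoidance of crossings, and orientation-preservation of the cover giving sign-preservation) that the paper leaves implicit.
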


Furthermore, we can obtain attainable sign sequences for $k/n$ from attainable sequences for $k$ in the following way. Given a sign sequence $A$ of length $r$, for each $i \in \{0, \ldots, r\}$ let $p_A(i)$ be the sum of the terms of $A$ from indices $1$ to $i$. For $i \in \{1, \ldots, r\}$, let $q_A(i)$ be the maximum of $p_A(i - 1)$ and $p_A(i)$. Then for $x \in \Z/n\Z$, let $A^x$ be the subsequence of $A$ consisting of only the terms from indices $i$ with $q_A(i) \equiv x \pmod n$. Given $(K, a)$, we may label the $n$ lifts of $a$ as $a^1, \ldots, a^n$ in such a way that they increment counterclockwise around the lift of $v_0$, and the initial direction of $K/n$ is between $a^n$ and $a^1$. Then the $i$\supscr{th} intersection of $K$ with $a$ lifts to an intersection of $K/n$ with $a^{q(i)}$, so for each $x$, $\Seq(K/n, a^x) = \Seq(K, a)^x$. This implies the following.

\begin{prop}
For every attainable sequence $A$ for $k$, each $A^x$ is attainable for $k/n$.
\end{prop}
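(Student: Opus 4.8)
The plan is to take the construction already set up and verify carefully that it produces a genuine shortcut diagram for $k/n$ with the asserted sign sequence. I would start with any attainable sequence $A = \Seq(K, a)$ for $k$, where $(K, a)$ is a shortcut diagram representing $k$, and write $A = (A_1, \ldots, A_r)$. Fixing a lift $K/n$ together with the labelling $a^1, \ldots, a^n$ of the lifts of $a$ as in the preceding discussion, the first thing to check is that each pair $(K/n, a^x)$ really is a shortcut diagram for $k/n$: since $v_0$ and $v_1$ are the branch points, each has a unique preimage, so every $a^x$ is an embedded arc joining the lift of $v_0$ to the lift of $v_1$; and because the branched covering map is a local homeomorphism away from the branch points, $a^x$ inherits from $a$ the properties of being embedded, transverse to $K/n$, and disjoint from the crossings. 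Thus $(K/n, a^x)$ represents $k/n$, so $\Seq(K/n, a^x)$ is by definition attainable for $k/n$, and it suffices to identify this sequence with $A^x$.

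The heart of the argument is that identification, and for it I would pass first to the infinite cyclic cover. Deleting neighborhoods of $v_0$ and $v_1$ turns the ambient sphere into an annulus carrying the diagram, and after an ambient isotopy of the pair (which changes neither the order nor the signs of the intersections, hence not $A$) I may take $a$ to be a standard radial arc across the annulus; its lifts to the universal ($\Z$-)cover are then disjoint parallel arcs, one per deck translate, cutting the strip into regions indexed by $\Z$ according to level. Lifting $K$ and following it from $v_0$, each positive crossing with $a$ moves the lift one level up and each negative crossing moves it one level down, so after the $i$-th crossing the lift sits in the region at level $p_A(i) = \sum_{j \le i} A_j$. Consequently the $i$-th crossing is with the lift of $a$ separating the regions at levels $p_A(i-1)$ and $p_A(i)$, namely the one labelled by the larger of the two, $\max\{p_A(i-1), p_A(i)\} = q_A(i)$; this is exactly where the maximum in the definition of $q_A$ comes from. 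Reducing modulo $n$ and matching the level labelling to the cyclic labelling $a^1, \ldots, a^n$ — which is what the convention that $K/n$ starts between $a^n$ and $a^1$ is there to pin down — the $i$-th crossing of $K/n$ is with $a^x$ precisely when $q_A(i) \equiv x \pmod n$.

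It then remains to control signs and order. Since $K/n$ maps homeomorphically onto $K$, the crossings are encountered in the same order along $K/n$ as along $K$, and since the branched cover is orientation-preserving off the branch points, the lift of a crossing of a given sign has the same sign. Therefore $\Seq(K/n, a^x)$ is obtained from $A$ by keeping exactly the entries $A_i$ with $q_A(i) \equiv x \pmod n$, in their original order and with their original signs, which is precisely $A^x$; combined with the first paragraph this shows $A^x$ is attainable for $k/n$. I expect the main obstacle to be the bookkeeping in the middle step: making the level-tracking rigorous and, in particular, checking that it is the maximum $q_A(i)$ rather than the minimum that records which lift is crossed, and reconciling the level indexing with the prescribed cyclic labelling of the $a^x$ so that the mod-$n$ reduction lands on the correct index.
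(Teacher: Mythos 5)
Your proof is correct and follows essentially the same route as the paper: the paper's own (terse) justification is precisely the observation that the $i$\textsuperscript{th} intersection of $K$ with $a$ lifts to an intersection of $K/n$ with $a^{q_A(i) \bmod n}$, with order and signs preserved, and your level-tracking in the infinite cyclic cover is just a careful, explicit verification of that claim together with the routine check that each $(K/n, a^x)$ is a genuine shortcut diagram.
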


\section{Bounds on Signed Height}\label{sec:poly-bounds}

\subsection{Writhes}

Given a crossing $c$ in a knotoid diagram $K$, there is a unique resolution of $c$ that respects orientation. This resolution creates an oriented diagram with two components, a loop $L$ and an interval $K'$ with the same endpoints as $K$. The winding number of $L$ around the twice-punctured sphere is called the \emph{intersection index} of $c$, denoted $\Ind(c)$. The index is equal to the intersection number of $L$ with any shortcut, or with $K'$. Note that the index of a crossing doesn't depend on any over/under information. If a crossing has index $n$, it will be called an \emph{$n$-crossing}.

\begin{defn}\label{defn:writhe-defn}
For nonzero $n$, the \emph{$n$-writhe} $J_n(K)$ of $K$ is half the sum of the signs of the $n$-crossings.
\end{defn}

\begin{thm}[Kim--Im--Lee \cite{KIL18}]
For nonzero $n$, the $n$-writhe is a knotoid invariant.
\end{thm}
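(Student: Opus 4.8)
The plan is to verify invariance directly on the generating moves: planar isotopy and the Reidemeister moves I, II, and III performed away from the endpoints. Since both the sign and the index of a crossing are defined topologically, planar isotopy changes neither, so it suffices to control how each Reidemeister move alters the multiset of pairs $(\Ind(c), \sign(c))$ ranging over crossings $c$. The central observation I would use throughout is that each Reidemeister move is supported in a disk $D$ with $v_0, v_1 \notin D$, and that the move preserves the connectivity pattern of the strands across $\bdy D$ — that is, each strand entering $D$ exits at the same boundary point before and after. Computing $\Ind(c)$ as the algebraic intersection number $L_c \cdot a$ for a fixed shortcut $a$ chosen disjoint from $D$, the index of any crossing $c$ lying outside $D$ is determined by the (unchanged) arcs outside $D$ together with the connectivity inside $D$, both of which the move leaves fixed.

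For the Reidemeister I move, the single new crossing resolves to a small loop bounding a disk inside $D$; since $D$ contains no puncture, this loop is null-homologous and the crossing has index $0$. As $n \neq 0$, the move introduces no $n$-crossing, and by the connectivity argument above the indices and signs of all other crossings are unchanged, so $J_n$ is preserved. For the Reidemeister II move, the two new crossings always carry opposite signs. I would argue that they also share a common index: the oriented resolutions $L_{c_1}$ and $L_{c_2}$ differ only by the boundary of the bigon between the two crossings, which is null-homologous, so $[L_{c_1}] = [L_{c_2}]$ and hence $\Ind(c_1) = \Ind(c_2)$. Their contributions to the sum defining $J_n$ therefore cancel (or are both absent), while the remaining crossings are again untouched.

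The Reidemeister III move is the one requiring the most care, since it neither creates nor destroys crossings but rearranges three of them. Here I would first invoke the standard fact that R3 preserves the sign of each of the three crossings, as each remains a crossing between the same two oriented strands with the same over/under data. It then remains to show that the index of each such crossing is individually preserved. The key point is that R3 fixes the connectivity of the three strands across $\bdy D$ (each strand joins the same pair of boundary points before and after), and the oriented resolution at a crossing $c_{ij}$ between strands $i$ and $j$ depends only on the local orientations and cyclic arc order at $c_{ij}$. Consequently the spliced arcs of $L_{c_{ij}}$ attach to the same boundary points of $D$ before and after the move, so $[L_{c_{ij}}]$, and therefore $\Ind(c_{ij})$, is unchanged. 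Thus the full multiset $\{(\Ind(c), \sign(c))\}$ is preserved, and $J_n$ is invariant.

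I expect the R3 case to be the main obstacle, specifically the verification that resolving a single crossing produces the same boundary connectivity before and after the move despite the presence of the other two crossings inside $D$. The overall factor of $\tfrac12$ in the definition plays no role in invariance; it is a normalization, and every step above shows that the unnormalized sum of signs over $n$-crossings is already invariant.
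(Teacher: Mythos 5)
The paper does not prove this statement at all: it is imported as an external result from Kim--Im--Lee \cite{KIL18}, so there is no internal proof to compare against. Your argument is correct and is the standard one (and essentially the argument of the cited source): each Reidemeister move is supported in a disk missing both endpoints, the R1 crossing has index $0$ (which is exactly where $n \neq 0$ is used), the R2 crossings have equal index and opposite signs because their resolved loops differ by the null-homologous bigon boundary, and R3 preserves both the sign and the index of each crossing since boundary connectivity of the resolved arcs is unchanged and the disk contains no puncture.
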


\begin{rem}
Our convention differs from \cite{KIL18} by a factor of $2$; they omit the word ``half" in \Cref{defn:writhe-defn}. Under our convention, the $n$-writhe is still an integer: Any knotoid diagram can be turned into a diagram for the trivial knotoid by switching the signs of crossings such that each ``late" strand passes over each ``early" strand. Each such switch changes the $n$-writhe by an integer, and the $n$-writhe of the trivial knotoid is $0$, so all $n$-writhes of all knotoids are integers. However, what we say here does not apply in general to virtual knotoids, which are considered in \cite{KIL18} alongside classical knotoids.
\end{rem}

The following are immediate consequences of the definition of $n$-writhe:

\begin{prop}
For a knotoid $k$, we have the following:
\begin{enumerate}
\item $J_n(\rev(k)) = J_n(k)$
\item $J_n(\mir(k)) = -J_n(k)$
\item $J_n(\sym(k)) = -J_{-n}(k)$
\item $J_n(\rot(k)) = J_{-n}(k)$
\end{enumerate}
\end{prop}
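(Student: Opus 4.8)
The plan is to reduce each of the four identities to two crossing-level bookkeeping facts: how the involution affects $\sign(c)$ and how it affects $\Ind(c)$, for an arbitrary crossing $c$. Once these are pinned down, every identity follows by rewriting the defining sum in \Cref{defn:writhe-defn}, since the set of $n$-crossings and their contributed signs transform in a controlled way. So first I would record the relevant behavior of signs and indices, and then read off the formulas.

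For the signs I would invoke the standard local facts: reversing the orientation of the whole knotoid preserves every crossing sign (both strands reverse, so the local oriented frame keeps its orientation), switching the over/under data at a crossing reverses its sign, and reversing the orientation of the ambient $S^2$ reverses every crossing sign. For the indices the key inputs are mostly already in the text. Since $\Ind(c)$ is independent of over/under information, $\mir$ fixes every index. For $\rev$ I would use the formulation $\Ind(c) = L \cdot K'$, the algebraic intersection number of the oriented-resolution loop $L$ with the interval $K'$; because $\rev$ reverses both $L$ and $K'$ at once, every intersection sign is preserved, so $\Ind(c)$ is unchanged. For $\sym$ the curves $L$ and $K'$ themselves are unchanged, but the intersection number is computed in $S^2$, whose orientation has been reversed, so each intersection sign flips and $\Ind(c) \mapsto -\Ind(c)$.

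Combining these yields the first three identities directly. Under $\rev$ both sign and index are preserved, so the $n$-crossings and their signs are unchanged, giving $J_n(\rev(k)) = J_n(k)$. Under $\mir$ the index is preserved while each sign flips, giving $J_n(\mir(k)) = -J_n(k)$. Under $\sym$ each sign flips and each index negates, so the $n$-crossings of $\sym(k)$ are exactly the $(-n)$-crossings of $k$ carrying the opposite sign, and the (inert) factor of $\tfrac12$ carries through to give $J_n(\sym(k)) = -J_{-n}(k)$. Finally, since $\rot = \sym\circ\mir$, the fourth identity follows formally from the second and third applied in turn: $J_n(\rot(k)) = -J_{-n}(\mir(k)) = J_{-n}(k)$.

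The main obstacle is the sign bookkeeping for the index, specifically distinguishing $\rev$ from $\sym$. Both might naively look like they should negate a winding number, but $\rev$ reverses both $L$ and $K'$ (and flips the $v_0$-to-$v_1$ convention fixing the generator of $H_1$ of the twice-punctured sphere), leaving $\Ind$ fixed, whereas $\sym$ keeps the curves but reverses the ambient orientation, negating $\Ind$. Everything else is routine once these two cases are separated, so it is worth stating the intersection-number convention explicitly to rule out a spurious overall sign.
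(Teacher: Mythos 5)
Your proof is correct, and it is essentially the argument the paper has in mind: the paper states these identities without proof, as ``immediate consequences of the definition,'' and your crossing-level bookkeeping (sign behavior under each involution, index behavior via $\Ind(c) = L \cdot K'$, and $\rot = \sym \circ \mir$ for the fourth identity) is exactly the verification underlying that assertion. Your explicit handling of the $\rev$ versus $\sym$ distinction—both strands reversing versus the ambient orientation reversing—is the one place where a sign error could creep in, and you resolve it correctly.
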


\begin{prop}
For knotoids $k_1$ and $k_2$, $J_n(k_1k_2) = J_n(k_1) + J_n(k_2)$.
\end{prop}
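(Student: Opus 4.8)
The plan is to reduce the statement to the level of diagrams and then track each crossing individually. Since the $n$-writhe is a knotoid invariant, I may compute $J_n(k_1k_2)$ using any convenient diagram, and in particular a product diagram $K_1K_2$ built from chosen diagrams $K_1$ and $K_2$ of $k_1$ and $k_2$. The gluing that produces $K_1K_2$ introduces no new crossings, so the crossings of $K_1K_2$ are exactly the crossings of $K_1$ together with those of $K_2$, each retaining its sign. Because $J_n$ is half the signed count of the crossings of index $n$, the proposition will follow once I show that the intersection index of every crossing is unchanged by the multiplication: a crossing inherited from $K_i$ must have the same index in $K_1K_2$ as it does in $K_i$.

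To prove this index-preservation claim I would use the characterization of $\Ind(c)$ as the algebraic intersection number of the oriented-resolution loop $L$ with a shortcut. Let $\Sigma$ be the gluing circle separating the $K_1$ region $D_1$ from the $K_2$ region $D_2$ in $S^2$, crossed exactly once by the main strand. For a crossing $c$ lying in $K_1$, both passages of the strand through $c$ occur strictly inside the parameter range of $K_1$, so the loop $L$ — the sub-arc of the strand between these two passages, closed up at $c$ — lies entirely in $D_1$ and is bounded away from $\Sigma$. Now choose a shortcut $a$ for $K_1K_2$ meeting $\Sigma$ once, so that $a \cap D_1$ is a shortcut for $K_1$ and $a \cap D_2$ lies in $D_2$. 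Since $L$ is disjoint from $D_2$, it meets only $a \cap D_1$, and therefore
$$\Ind_{K_1K_2}(c) = L \cdot a = L \cdot (a \cap D_1) = \Ind_{K_1}(c).$$
The identical argument with the roles of the two factors exchanged shows that a crossing in $K_2$ keeps its index as well (the orientations remain consistent, so no sign is lost).

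With index-preservation in hand, the $n$-crossings of $K_1K_2$ are precisely the $n$-crossings of $K_1$ together with those of $K_2$, signs intact, so summing signs and halving gives $J_n(K_1K_2) = J_n(K_1) + J_n(K_2)$. The main obstacle is exactly this index-preservation step: the index is defined through a winding number on a twice-punctured sphere whose puncture set changes under multiplication (the head of $K_1$ and tail of $K_2$ become an interior point), so one must confirm that the computation localizes to $D_1$ (resp. $D_2$). The crux is the observation that the resolution loop $L$ never reaches the gluing circle, which lets the intersection count be read off in the factor alone; equivalently, one can argue homologically that the inclusion $D_i \setminus \{v\} \hookrightarrow S^2 \setminus \{v_0, v_1\}$ carries the relevant generator of first homology to the generator, so the class of $L$, and hence its winding number, is the same whether measured in $K_i$ or in $K_1K_2$.
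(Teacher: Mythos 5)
Your proof is correct, and it is essentially the argument the paper leaves implicit: the paper states this proposition as an ``immediate consequence of the definition of $n$-writhe,'' the content of which is exactly your observation that a product diagram's crossings are the union of the factors' crossings with signs and intersection indices preserved, the index-preservation holding because each resolution loop $L$ stays on its own side of the gluing circle. Your localization of the index computation to $D_1$ (resp.\ $D_2$) is the right justification and fills in the detail the paper omits.
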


The $n$-writhes of a knotoid can be encoded in the coefficients of a polynomial. The \emph{index polynomial} for $k$ is
$$F_k(t) = \sum_{n\neq0} J_n(k)(t^n - 1) \in \Z[t, t^{-1}].$$
This is closely related to its similarly-named predecessor, the \emph{affine index polynomial} of \cite{GuKa17}, defined by
$$P_k(t) = \sum_c \sign(c)(t^{w(c)} - 1),$$
where $w(c)$ is $\sign(c)\ssgn(c)\Ind(c)$, and $\ssgn(c)$ is as shown in \Cref{fig:sequential-sign-con}. Note that $w(c)$ differs from $\Ind(c)$ only by sign. The affine index polynomial satisfies $P_k(t) = P_k(t^{-1})$ for all $k$ (\cite{GuKa17}), so it is related to the index polynomial by the formula
\begin{equation}\label{eq:AIP-formula}
P_k(t) = F_k(t) + F_k(t^{-1}).
\end{equation}
The $t^{\pm n}$ coefficient $J_n(k) + J_{-n}(k)$ of the affine index polynomial of $k$ equals the natural linking number of consecutive components in a periodic diagram for $k/n$ (see \Cref{fig:lifting-fig}).

\begin{figure}
\includegraphics[scale=0.2]{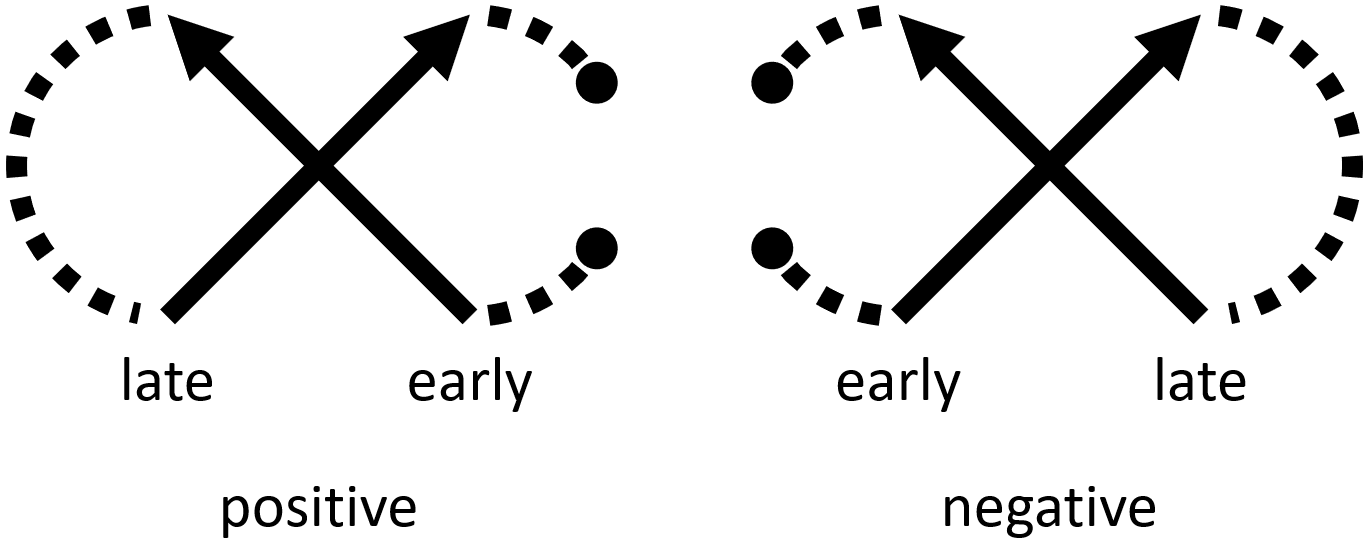}
\caption{Sequential signs of crossings.}
\label{fig:sequential-sign-con}
\end{figure}

The degree of the affine index polynomial was shown to be a lower bound for the height of a knotoid in \cite{GuKa17}. Because of the relationship in \cref{eq:AIP-formula}, this is equivalent to Proposition 3.12 of \cite{KIL18}. \Cref{thm:writhe-bound}, together with \Cref{thm:main-thm}, is an improvement on this bound in the case that $\deg^+(F_k)$ and $\deg^-(F_k)$ are both positive.

We now prove Theorems \ref{thm:writhe-bound}, \ref{thm:unique-sequences}, and \ref{thm:writhe-sequences}.

\begin{proof}[Proof of \Cref{thm:writhe-sequences}]
For nonzero $n$, if $J_n(k) \neq 0$, any shortcut diagram for $k$ must have an $n$-crossing $c$. Then the segment of $K$ starting and ending at $c$ has, algebraically, $n$ intersections with the shortcut $a$, so the signs in the corresponding segment of $\Seq(K, a)$ add up to $n$.
\end{proof}

\begin{proof}[Proof of \Cref{thm:writhe-bound}]
If $J_n(k) \neq 0$, then as above, in any attainable sign sequence for $k$ there is a consecutive subsequence with sum $n$. Therefore, for positive $n$ there must be at least $n$ appearances of $+$, and for negative $n$ there are at least $-n$ appearances of $-$. This proves that the positive/negative height of $k$ is bounded below by the positive/negative degree of $F_k$.
\end{proof}

\begin{proof}[Proof of \Cref{thm:unique-sequences}]
Suppose that $h_\pm(k) = \deg^\pm(F_k)$. A minimal attainable sign sequence contains $h_+(k)$ copies of $+$ and $h_-(k)$ copies of $-$, and by \Cref{thm:writhe-sequences}, the terms of the same sign must all be consecutive. Therefore, any minimal attainable sign sequence is one of $(+, \ldots, +, -, \ldots, -)$ or $(-, \ldots, -, +, \ldots, +)$. Call these two sequences $A_1$ and $A_2$, respectively. To show that only one of these can be attainable, we consider several cases.

\case{Case 1: $h_+(k)$ or $h_-(k)$ is $0$.}

If one of the signed heights is zero, then all of the terms are the same sign, and $A_1 = A_2$.

\case{Case 2: $h_+(k), h_-(k) > 1$.}

In this case, $A_1$ and $A_2$ are not related by a shift move. There are no other minimal attainable sequences, so by \Cref{thm:shift-moves}, they cannot both be attainable.

\case{Case 3: $h_+(k)$ or $h_-(k)$ is $1$ and neither is $0$.}

Supposing that $A_1$ and $A_2$ are both attainable, they are the only minimal attainable sequences. By \Cref{lem:main-lem}, there are compatible spanning disks $D_1$ and $D_2$ for $\tau(k)$ such that $\Seq(\tau(k), D_i) = A_i$ for $i = 1, 2$.

Since $D_1 \cup D_2$ is an embedded sphere in $S^3$ and $e_+ \cup e_-$ is an embedded circle in $D_1 \cup D_2$, we may pick an embedded sphere $\Sigma$ such that $\Sigma$ intersects $e_+ \cup e_-$ at $v_0$ and $v_1$ only, and such that $D_1$ and $D_2$ each intersect $\Sigma$ in an interval. Then we may slide $e_0$ down onto $\Sigma$ and obtain a diagram $K$ for $k$ with two \emph{compatible} shortcuts $a_1$ and $a_2$ corresponding respectively to $D_1$ and $D_2$ (see \Cref{fig:two-shortcut}). We have a region $E$ in $S^2$ bounded by $a_1 \cup a_2$ such that there are $h(k)$ segments of $K$ in $E$, one entering and leaving by $a_1$, one entering and leaving by $a_2$, and the rest crossing from one side to the other. Without loss of generality, we may assume that, starting from $v_0$, $K$ intersects $a_2$ before $a_1$. Then $h_-(k)$ must be $1$, and the intersections come in the order
$$(-a_2, +a_2, \ldots, +a_1, -a_1),$$
where the ``$\ldots$" consists of $h_+(k) - 1$ consecutive copies of $(+a_1, +a_2)$.
Since any crossing $c$ of $K$ lies either in $E$ or the complement of $E$, the loop on $K$ from $c$ to $c$ has an even total number of intersections with $a_1$ and $a_2$. Therefore, if that loop includes the negative intersection with $a_1$, it also includes at least one positive $a_1$ intersection, so if we measure the index of $c$ by intersections of the loop with $a_1$, the index is nonnegative.

\begin{figure}
\includegraphics[scale=0.4]{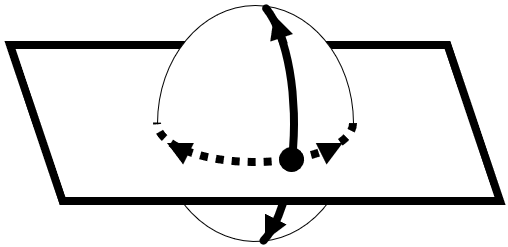}
\caption{Compatible spanning disks correspond to compatible shortcuts.}
\label{fig:two-shortcut}
\end{figure}

Since all crossings have nonnegative index, all negative writhes are $0$, contradicting the assumption that $h_-(k) = \deg^-(F_k)$. This proves \Cref{thm:unique-sequences}.
\end{proof}

\subsection{The Turaev Polynomial}\label{sec:turaev-poly}

A \emph{state} of a diagram $K$ is a function from the set of crossings to $\{-, +\}$. For each state $s$, the $s$-smoothing of $K$ is given by smoothing each crossing according to \Cref{fig:bracket-sign}. The sum of $s(c)$ over all crossings is denoted $n(s)$, and the number of embedded circles in the diagram after smoothing by $s$ is $\ell(s)$. (There is also one embedded interval, which is not counted.) The \emph{bracket polynomial} of $K$ is then
$$\la K\ra = \sum_s A^{n(s)}(-A^2 - A^{-2})^{\ell(s)} \in \Z[A, A^{-1}].$$
The bracket polynomial is invariant under Reidemeister moves I', II, and III, so it is a framed knotoid invariant. A Reidemeister I move changes the bracket polynomial by a factor of $-A^{-3}$, so the \emph{normalized bracket polynomial} defined by
$$\la K\ra_\circ = (-A)^{-3\wri(K)}\la K\ra$$
in \cite{Tur12} is an unframed invariant.

\begin{figure}
\includegraphics[scale=0.25]{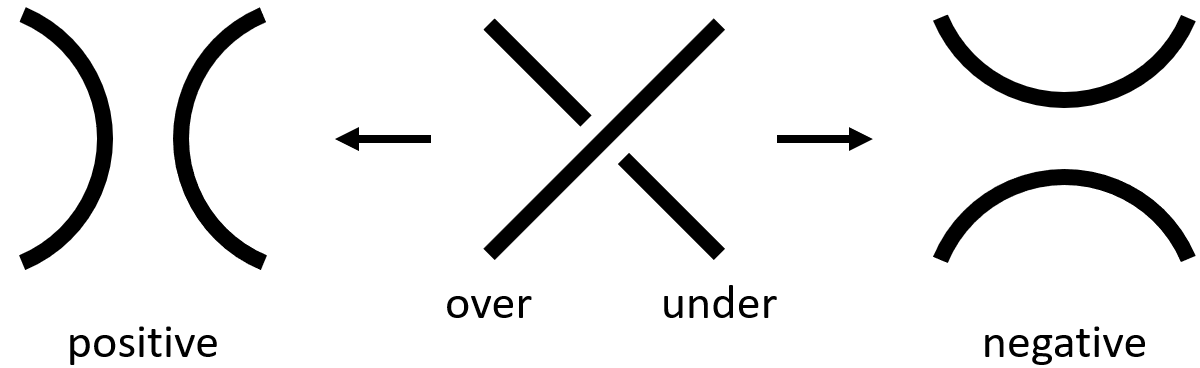}
\caption{Smoothings in the bracket polynomial.}
\label{fig:bracket-sign}
\end{figure}

There is also a two-variable version of the bracket polynomial, called the \emph{extended bracket polynomial} in \cite{Tur12} or the \emph{Turaev polynomial} as in \cite{Kut20}. For a shortcut diagram $(K, a)$, let $a(K)$ denote the algebraic height
$$h_+(K, a) - h_-(K, a),$$
and for any state, let $a(s)$ be the algebraic height of the interval component of the $s$-smoothing of $K$, with its natural orientation. Then the Turaev polynomial of $(K, a)$ is
$$\laa K, a\raa = \sum_s A^{n(s)}u^{a(s)}(-A^2 - A^{-2})^{\ell(s)} \in \Z[A^{\pm1}, u^{\pm1}].$$
This is an invariant of knotoids with both a framing and shortcut framing. The normalized version
$$\laa K, a\raa_\circ = (-A)^{-3\wri(K)}u^{-a(K)}\laa K, a\raa$$
is a knotoid invariant and always takes values in $\Z[A^{\pm2}, u^{\pm2}]$.

The height of a knotoid satisfies $2h(k) \geq \deg_u^+(\laa k\raa_\circ) + \deg_u^-(\laa k\raa_\circ)$ (\cite{Tur12}). \Cref{thm:turaev-bound} does not improve this bound on the overall height but is the equivalent statement for the signed heights.

\begin{proof}[Proof of \Cref{thm:turaev-bound}]
Fix a shortcut diagram $(K, a)$ representing a knotoid $k$. For any state $s$, the $s$-smoothing of $K$ only has as many intersections with $a$ as $K$ does. In particular, the interval component of the smoothing has no more than
$$h_+(K, a) + h_-(K, a)$$
positive or negative intersections with $a$, so we have 
$$-2h_+(K, a) \leq a(s) - a(K) \leq 2h_-(K, a).$$
Therefore, the $u$ exponents of $\laa k\raa_\circ$ are no more than $2h_-(k)$ and no less than $-2h_+(k)$.
\end{proof}

A categorification of the Turaev polynomial, the triply-graded \emph{winding homology}, is defined in \cite{Kut20}. The corresponding Poincar\'e polynomial is denoted $W_k(t, A, u)$ and satisfies
$$W_k(-1, A, u) = \laa k\raa_\circ$$
for every $k$. The winding homology is the homology of a chain complex in which each generator is given a $u$-grading $a(s) - a(K)$ for some state $s$, so in addition to \Cref{thm:turaev-bound} we may also say that
$$2h_\pm(k) \geq \deg_u^\mp(W_k(t, A, u)).$$

\section{Knotoids with Low Height}\label{sec:applications}

\subsection{Knotoids of Height One}

\Cref{lem:main-lem} allows us to characterize knotoids of height $1$ using tangles. Let $B$ be the unit ball in $\R^3$ with labelled points $N = (0, 1, 0)$, $E = (1, 0, 0)$, $S = (0, -1, 0)$, and $W = (-1, 0, 0)$. Suppose we are given a $2$-tangle $R$ in $B$ with a strand connecting $N$ to $E$, and a strand connecting $W$ to $S$. Then we may form a knotoid $\overline R$ as in \Cref{fig:height-one-fig}. This knotoid has $h_+(\overline R) \leq 1$ and $h_-(\overline R) = 0$. Let $T$ be the set of (isotopy classes of) such tangles $R$ such that (a) no ball inside $B$ contains a nontrivial $1$-tangle and (b) $R$ is not the trivial tangle formed by two straight line segments.

\begin{figure}
\includegraphics[scale=0.3]{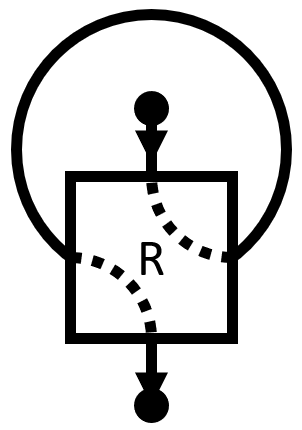}
\caption{Forming a knotoid diagram from a $2$-tangle diagram $R$.}
\label{fig:height-one-fig}
\end{figure}

A knotoid of height $1$ is prime if and only if it has no knot-type factor. Height-$1$ knotoids each have height pair $(1, 0)$ or $(0, 1)$, and the two types are in bijective correspondence via rotation. Let $U$ be the set of prime knotoids with height pair $(1, 0)$.

\begin{thm}
The map $R \mapsto \overline R$ is a bijection $T \to U$.
\end{thm}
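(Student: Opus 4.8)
The plan is to establish that $R \mapsto \overline R$ is a well-defined map into $U$, then construct an inverse. First I would verify that $\overline R \in U$ for each $R \in T$. The construction in \Cref{fig:height-one-fig} guarantees $h_+(\overline R) \leq 1$ and $h_-(\overline R) = 0$, so by \Cref{thm:main-thm} the height pair is either $(1, 0)$ or $(0, 0)$. Condition (b), that $R$ is nontrivial, should force $\overline R$ to be a proper knotoid rather than knot-type, ruling out height $0$; I would check this by arguing that a height-$0$ knotoid $\overline R$ would make $R$ isotopic to the trivial tangle. Primeness of $\overline R$ then follows from condition (a): a knot-type factor would correspond to a ball containing a nontrivial $1$-tangle, which (a) forbids, and by the discussion preceding the theorem a height-$1$ knotoid with no knot-type factor is prime. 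This shows the map lands in $U$.

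Next I would construct the inverse using the tangle characterization afforded by \Cref{lem:main-lem}. Given $k \in U$, apply \Cref{lem:main-lem} (as in the proof of \Cref{thm:unique-sequences}) to realize $k$ via a spanning disk $D$ for $\tau(k)$ achieving the height pair $(1, 0)$, and by the correspondence between spanning disks and shortcut diagrams, obtain a shortcut diagram $(K, a)$ with $\Seq(K, a) = (+)$. The single positive intersection of $K$ with $a$ cuts the diagram into two arcs meeting at one transverse crossing; cutting along a ball boundary separating the endpoint region should recover a $2$-tangle $R$ in the configuration of \Cref{fig:height-one-fig}, so that $\overline R = k$. I would then verify that this $R$ satisfies (a) and (b): condition (a) holds because $k$ is prime (a nontrivial $1$-tangle inside $R$ would give a knot-type factor of $k$), and (b) holds because $k$ has height $1 > 0$, so $R$ cannot be trivial.

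The main obstacle I anticipate is \emph{well-definedness} of the inverse: the recovered tangle $R$ must be independent of the choice of spanning disk realizing $(1, 0)$, and I must show $R \in T$ is determined up to isotopy by the isotopy class of $k$. Here \Cref{thm:unique-sequences} is unavailable in general, but \Cref{thm:shift-moves} tells us that for height pair $(1,0)$ the minimal attainable sequence $(+)$ is unique, and more importantly \Cref{lem:main-lem} connects any two realizing disks through compatible disks of nonincreasing signed height — all necessarily equal to $(1,0)$. I would argue that passing between compatible realizing disks corresponds to an isotopy of the associated tangles, so the isotopy class of $R$ is an invariant of $k$. Once both maps are shown well-defined, the compositions $R \mapsto \overline R \mapsto R$ and $k \mapsto R \mapsto \overline R$ are identities essentially by construction, since the cutting and regluing operations are mutually inverse. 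The remaining care is bookkeeping around the knot-type/prime correspondence and confirming the boundary-point conventions ($N, E, S, W$) match the orientation data in the figure.
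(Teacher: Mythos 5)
Your overall strategy matches the paper's: show the map lands in $U$, invert it by cutting along a minimal spanning disk, and use \Cref{lem:main-lem} to control the choice of disk. But there is a genuine gap at exactly the step you flag as the main obstacle, and it is not bookkeeping. You assert that ``passing between compatible realizing disks corresponds to an isotopy of the associated tangles.'' This claim is false in general, so it cannot be argued in the abstract: if a knotoid has a local knot $\kappa$ on $e_0$ (i.e.\ a knot-type factor $\kappa^\bullet$), one can slide a minimal spanning disk past the local knot to obtain a second minimal spanning disk, compatible with the first, whose complementary tangle is different --- the local knot sits on the $N$--$E$ strand for one disk and on the $W$--$S$ strand for the other, and those two tangles are not isotopic. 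The missing idea, which is the heart of the paper's proof, is a rigidity statement: if $D_0$ and $D_1$ are compatible spanning disks both with sequence $(+)$, then the sphere $D_0 \cup D_1$ traps a $1$-tangle (a sub-arc of $e_0$) in one of its complementary balls; condition (a) of the definition of $T$ (or, for the inverse direction, primeness of $k \in U$ together with properness, which rules out local knots) forces that trapped $1$-tangle to be unknotted, and only then does it follow that $(\tau(\overline R), D_1)$ is isotopic to $(\tau(\overline R), D_0)$. With this rigidity in hand, the chain from \Cref{lem:main-lem} does give well-definedness of the inverse and injectivity, as you intend; without it, the chain only tells you the sign sequences agree, not the tangles.

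The same gap affects your claim that a height-$0$ knotoid $\overline R$ ``would make $R$ isotopic to the trivial tangle.'' Knowing merely that some spanning disk has height $0$ gives no handle on $R$ until that disk is positioned relative to $D_0$, the disk you cut along to recover $R$. The paper's route: by \Cref{lem:main-lem} together with the rigidity above, a height-$0$ disk $D$ can be taken compatible with $D_0$; then $D_0 \cup D$ splits $e_0$ into two $1$-tangles, each trivial by condition (a), which exhibits $R$ as the trivial tangle and contradicts condition (b). So both (a) and (b) enter this step, not (b) alone. Aside from this --- admittedly central --- missing mechanism, your outline (primeness of $\overline R$ from (a), surjectivity by cutting along a height-$1$ disk, and recovering conditions (a) and (b) from primeness and properness of $k$) agrees with the paper's proof.
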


\begin{proof}
For any $R \in T$, there is a spanning disk $D_0$ for $\tau(\overline R)$ such that $\Seq(\tau(\overline R), D_0) = (+)$ and such that $R$ can be recovered by deleting a regular neighborhood of $D_0$ and using the appropriate identification $S^3 - \nu(D_0) \cong B$. Suppose $D_1$ is another spanning disk for $\tau(\overline R)$ such that $D_0$ and $D_1$ are compatible and $D_1$ also has sign sequence $(+)$. Then on one side of the sphere $D_0 \cup D_1$ is a $1$-tangle that, by condition (a) of the definition of $T$, is unknotted. Therefore, $(\tau(\overline R), D_1)$ is isotopic to $(\tau(\overline R), D_0)$.

Suppose $\tau(\overline R)$ has height $0$ (and therefore is not in $U$). Then by \Cref{lem:main-lem} and the previous paragraph, there is a $0$-height spanning disk $D$ compatible with $D_0$. Then $D_0 \cup D$ splits $\tau(\overline R)$ into two $1$-tangles, which must both be trivial, contradicting condition (b) of the definition of $T$. Therefore, $\tau(\overline R)$ has height $1$. For all $R \in T$, no ball intersecting $\tau(\overline R)$ may contain a nontrivial $1$-tangle, and so $\overline R$ is in $U$.

Knowing that $\tau(\overline R)$ has height $1$, \Cref{lem:main-lem} now implies that no other $R'$ has $\overline{R'} = \overline R$, so the map is injective.

For any knotoid $k \in U$, we may obtain a $R \in T$ with $\overline R = k$ by finding a spanning disk $D$ for $\tau(k)$ with height $1$ and deleting a regular neighborhood of $D$. Since $k$ is prime and has height $1$, $R$ satisfies conditions (a) and (b).
\end{proof}

\subsection{Knotoids of Height Two}

Consider the following two examples of knotoids with height $2$.

\begin{exmp}
The Kinoshita knotoid $\omega$, shown in \Cref{fig:kinoshita}, is notable for being a nontrivial knotoid with trivial overpass and underpass closures. The diagram shown has a shortcut with sign sequence $(+, -)$, and $\omega$ satisfies $F_\omega(t) = t^{-1} - 2 + t$. Therefore, $h_+(\omega) = h_-(\omega) = 1$, and by \Cref{thm:unique-sequences}, $(+, -)$ is the only minimal attainable sign sequence for $\omega$.

The Kinoshita knotoid satisfies $\rev(\omega) = \rot(\omega)$. Note that neither the index polynomial nor the Turaev polynomial distinguishes $\rot(\omega)$ from $\omega$. However, by \Cref{prop:involutions-sequences}, the only minimal attainable sequence for $\rot(\omega)$ is $(-, +)$, so $\omega$ is not rotatable.

\begin{figure}
\includegraphics[scale=0.3]{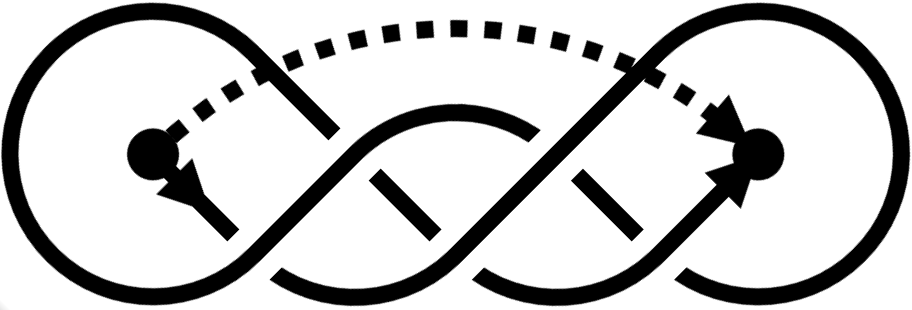}
\caption{The Kinoshita knotoid.}
\label{fig:kinoshita}
\end{figure}
\end{exmp}

\begin{exmp}
Let $k$ be the knotoid shown in \Cref{fig:cloud-fig}. The periodic diagram shown has shortcuts realizing $(+, -)$ and $(-, +)$ as attainable sign sequences. The index polynomial is $1 - t$, showing that $h_+(k) = 1$. A lower bound of $1$ for $h_-(k)$ is provided by the Turaev polynomial: the $u^2$ coefficient is $-A^{-2} + 2A^{-6} - A^{-10}$. Therefore, $(+, -)$ and $(-, +)$ are both minimal.

The information above gives us an easy way of showing that $k$ is prime: Since $k_-$ is trivial, $k$ has no knot-type factor, so to be composite it would have to be a product of two proper knotoids. One would have to have height pair $(1, 0)$, and the other $(0, 1)$, but then by \Cref{thm:seq-concatenation-d2}, only one of $(+, -)$ or $(-, +)$ would be attainable for $k$.

\begin{figure}
\includegraphics[scale=0.25]{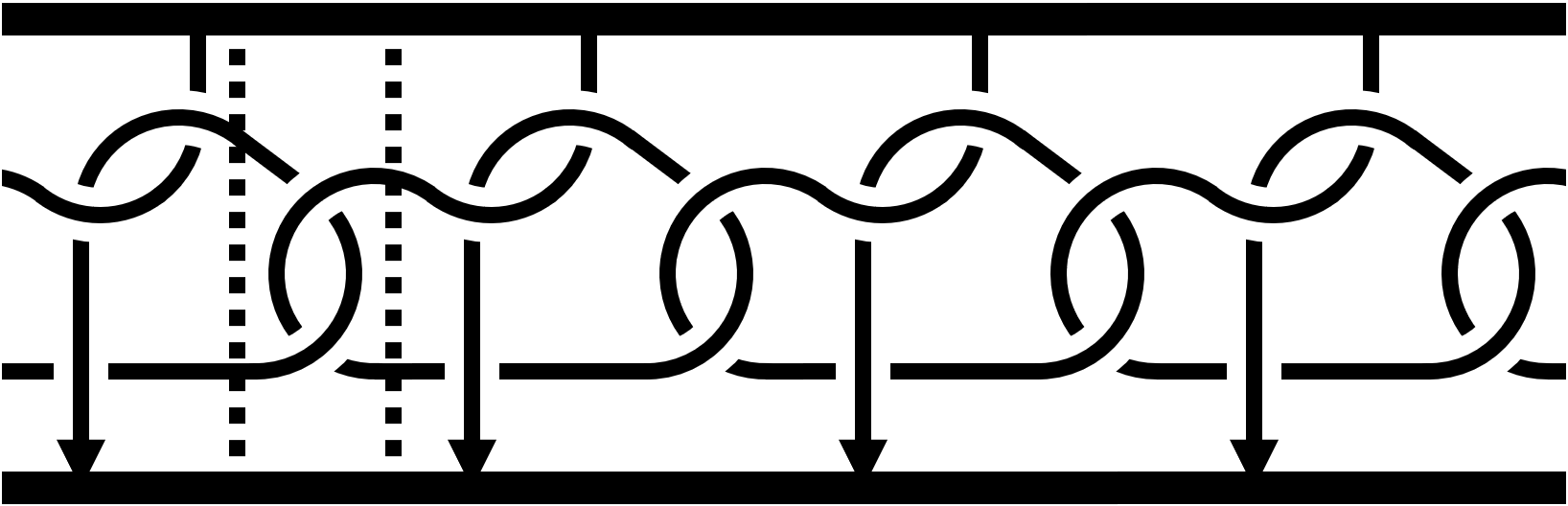}
\caption{A knotoid of height pair $(1, 1)$ with $(+, -)$ and $(-, +)$ attainable. This knotoid is reversible.}
\label{fig:cloud-fig}
\end{figure}
\end{exmp}

In general, a knotoid of height $2$ falls into one of five categories based on whether its set of minimal attainable sequences is $\{(+, +)\}$, $\{(-, -)\}$, $\{(+, -)\}$, $\{(-, +)\}$, or $\{(+, -), (-, +)\}$.
We will further divide the last category into two subcategories.

By \Cref{lem:main-lem}, if $k$ is a knotoid of height $2$ and both $(+, -)$ and $(-, +)$ are attainable sequences, then there are compatible spanning disks $D_1$ and $D_2$ respectively realizing those two sequences as attainable for $\tau(k)$.

\begin{thm}\label{thm:height-two}
Suppose $k$ is a knotoid as above. Then exactly one of the following is true.
\begin{enumerate}[(a)]
\item The disks $D_1$ and $D_2$ can be chosen in such a way that both of the intersections of $e_0$ with $D_1$ come before the intersections with $D_2$.
\item The disks $D_1$ and $D_2$ can be chosen in such a way that both of the intersections of $e_0$ with $D_2$ come before the intersections with $D_1$.
\end{enumerate}
\end{thm}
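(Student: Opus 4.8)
The plan is to split the statement into its two halves: that at least one of (a), (b) holds, and that they cannot both hold.

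For the first half I would start from the compatible disks $D_1, D_2$ supplied by \Cref{lem:main-lem}, with $\Seq(\tau(k), D_1) = (+,-)$ and $\Seq(\tau(k), D_2) = (-,+)$, and form the oriented embedded sphere $\Sigma = D_1 \cup (-D_2)$ bounding a ball $B$, exactly as in the proof of \Cref{lem:compatible-shift-moves}. Along $e_0$ the four intersection signs with $\Sigma$ alternate; the two crossings on $D_1$ contribute the signs $(+,-)$ (its own sequence), while the two on $-D_2$ contribute the negatives of $D_2$'s sequence, namely $-(-,+) = (+,-)$ again. Thus each disk contributes a $+$ before a $-$ in its own order along $e_0$. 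Since the very first crossing along $e_0$ is necessarily the first crossing of whichever disk it lies on, it is a $+$, so the global pattern is $(+,-,+,-)$; combined with the per-disk constraint this forces the two $D_1$-crossings to be consecutive and likewise the two $D_2$-crossings. Hence the given disks are non-interleaved, and either (a) or (b) already holds.

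For the second half I would argue by contradiction: suppose (a) is realized by $D_1, D_2$ and (b) by $D_1', D_2'$. Geometrically each configuration presents $e_0$ as leaving $v_0$ into $B$, making one ``$(+,-)$-clasp'' excursion across the $(+,-)$-disk and one ``$(-,+)$-clasp'' excursion across the $(-,+)$-disk, with (a) and (b) differing only in which excursion comes first. The strategy is to show that the ability to place the $(-,+)$-clasp both before and after the $(+,-)$-clasp forces the two clasps to be unlinked enough to be slid apart, which in turn produces a Whitney (finger) disk cancelling one clasp. Such a cancellation yields a spanning disk with strictly fewer $e_0$-intersections, so $h(\tau(k)) < 2$, contradicting $h(k) = 2$.

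The main obstacle is this second half, specifically extracting the cancellation. The four disks $D_1, D_2, D_1', D_2'$ need not be disjoint, so I would make the spheres $\Sigma = D_1 \cup (-D_2)$ and $\Sigma' = D_1' \cup (-D_2')$ transverse and run the innermost-circle surgery already used in \Cref{lem:main-lem-v1} (invoking \Cref{lem:immersion-lemma} to control the $e_0$-intersections) to replace the two configurations by a single pair of compatible minimal disks realizing both orderings at once; from such a pair the clasps are manifestly split and the cancelling disk is visible. Keeping every intermediate disk minimal throughout the surgery, and verifying that reversibility of the clasp order genuinely yields a cancellation rather than merely another minimal configuration, is the delicate point. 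As an alternative route I would consider passing to the double branched cover of $k$ over $e_+ \cup e_-$, in which $e_0$ lifts to the double-branched-cover knot and each spanning disk lifts to an invariant sphere; the ordering type should then be read off from the equivariant position of these spheres, and ``not both'' would follow from the uniqueness of that equivariant structure.
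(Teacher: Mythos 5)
Your first half is correct and is essentially the paper's own argument: you form the oriented sphere $\Sigma = D_1 \cup (-D_2)$ and use the alternation of signs of $e_0 \cap \Sigma$ (the reasoning of \Cref{lem:compatible-shift-moves}) to rule out interleaved orderings, concluding that any compatible pair realizes exactly one of the two non-interleaved patterns. This matches the paper.

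The second half --- that (a) and (b) cannot both hold --- is where your proposal has a genuine gap. Since (a) and (b) each assert the \emph{existence} of a choice of disks, mutual exclusivity is a statement about \emph{all} minimal spanning disks, and what is needed is an invariant of a minimal disk that (i) is preserved between compatible disks, so that \Cref{lem:main-lem} propagates it to every minimal disk, and (ii) distinguishes the two orderings. The paper constructs exactly this: the (a)-split/(b)-split condition on the $2$-tangles $P_D$ and $Q_D$ cut out by a minimal disk, with the innermost-circle tangle lemma (\Cref{lem:tangle-splitting}) supplying both the propagation along compatible pairs and the computation in a type-(a) picture (\Cref{lem:a-b-splitting}); since no disk is both (a)-split and (b)-split, (a) and (b) exclude each other. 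Your proposal replaces this with an unproved cancellation claim: that realizing both orderings would let the two clasps be ``slid apart,'' producing a Whitney-type disk and forcing $h(k) < 2$. No argument is given for the key implication, and the heuristic behind it is false: the paper's example in \Cref{fig:borromean} exhibits a height-$2$ knotoid with a minimal disk for which \emph{both} tangles $P_D$ and $Q_D$ are split --- the clasps are ``pairwise unlinked'' in exactly your sense --- yet no cancellation exists and the height remains $2$. Borromean-style linking is precisely the obstruction to ``split in pairs $\Rightarrow$ cancelling disk,'' so your contradiction cannot be reached this way.

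Your proposed repairs do not close the gap either. Surgering $\Sigma$ and $\Sigma'$ as in \Cref{lem:main-lem-v1} to obtain ``a single pair of compatible minimal disks realizing both orderings at once'' is incoherent with your own first half, which shows that any single compatible pair realizes exactly one ordering; the surgery can only connect the two configurations through a chain of compatible minimal pairs, and without an invariant preserved along that chain (the paper's splitting condition) nothing prevents the ordering from simply changing somewhere along it. The double-branched-cover alternative is too vague to assess: ``uniqueness of the equivariant structure'' is not an available statement, and no invariant is actually extracted from the lifted spheres.
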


To prove \Cref{thm:height-two}, we will use a particular notion of splitting for $2$-tangles, analogous to splitting of links: Suppose $B$ be a ball with four labelled points $NE$, $SE$, $NW$, and $SW$ on $\bdy B$, and $C$ is a fixed choice of isotopy class of circles on $\bdy B - \{NE, SE, NW, SW\}$ separating $NE$ and $SE$ on one side from $NW$ and $SW$ on the other. A circle in $C$ will be called a \emph{splitting circle}. Then a $2$-tangle in $B$ will be called \emph{split} with respect to $C$ if there is a properly embedded disk (a \emph{splitting disk}), disjoint from the strands of tangle, whose boundary is a splitting circle.

\begin{lem}\label{lem:tangle-splitting}
Suppose $T$ is a $2$-tangle formed from two other tangles $R$ and $S$ in the way shown in \Cref{fig:tangle-addition}. Take $C_R$ and $C_T$ to be the classes of $\bdy(B_R \cap B_S)$ on $B_R$ and $B_T$, respectively. Then $T$ is split if and only if $R$ is split.
\end{lem}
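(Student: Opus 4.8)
The plan is to treat the gluing locus as a separating disk and run a cut-and-paste argument. Write $E = B_R \cap B_S$; from the figure's addition this is a properly embedded disk in $B_T$ that separates it into the two sub-balls $B_R$ and $B_S$, it meets the two strands of $T$ transversally in exactly two points, and its boundary $\partial E$ is simultaneously a representative of the class $C_R$ on $\partial B_R$ and of $C_T$ on $\partial B_T$. I would prove the two implications separately, with the forward implication essentially immediate and the converse carrying all the content.

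For the easy direction, suppose $R$ is split, with splitting disk $D$ in $B_R$ satisfying $\partial D \in C_R$ and $D$ disjoint from the strands. After isotoping $\partial D$ off $E$ onto the free part $\partial B_R \setminus E$ of the boundary, the disk $D$ sits inside $B_R \subseteq B_T$, is automatically disjoint from the strands of $S$, and its boundary separates $\{NW_T, SW_T\}$ from $\partial E$, hence lies in $C_T$. Thus $D$ is a splitting disk for $T$, so $T$ is split.

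For the converse, let $D_T$ be a splitting disk for $T$. First isotope $\partial D_T$ to a circle parallel to $\partial E$ on $\partial B_T$ and make $D_T$ transverse to $E$, so that $D_T \cap E$ is a finite disjoint union of circles. I would remove these by innermost-disk surgery along $E$: take a circle of $D_T \cap E$ innermost on $D_T$, bounding a subdisk $\delta \subseteq D_T$ whose interior is disjoint from $E$, and let $E' \subseteq E$ be the subdisk it cuts off. Then $\delta \cup E'$ is an embedded sphere in the interior of $B_T$ meeting the strands only in the punctures contained in $E'$; since the strand arcs have all their endpoints on $\partial B_T$, any such sphere meets them in an even number of points, so $E'$ contains either $0$ or $2$ of the two punctures of $E$. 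Circles bounding an unpunctured $E'$ can be surgered away, strictly reducing $|D_T \cap E|$ while keeping $D_T$ embedded and disjoint from the strands. Iterating, $D_T$ is pushed off $E$ and comes to lie entirely in $B_R$ (or $B_S$), where it restricts to a disk with boundary in $C_R$ disjoint from $R$, i.e. a splitting disk for $R$.

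The hard part will be the circles whose subdisk $E'$ encloses \emph{both} punctures, since these cannot be removed by a strand-avoiding surgery; to eliminate them I would analyze the ball they cut off — which meets the strands in two arcs — and use the split hypothesis together with the particular way the addition in \Cref{fig:tangle-addition} routes the two strands through $E$ (this connectivity is what makes the statement single out $R$ rather than $S$). The second, related subtlety is the side-selection: after $D_T \cap E = \emptyset$ one must ensure the disk can be taken on the $R$ side, converting the $B_S$-case using the constrained strand pattern. I expect the parity bookkeeping for the two-puncture circles and this side-selection to be the genuine obstacle; the rest is routine innermost-disk manipulation.
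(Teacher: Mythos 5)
Your setup is on the right track and matches the paper's strategy up to a point: the easy direction, putting $D_T$ transverse to $E$, surgering away circles of $D_T \cap E$ that bound unpunctured subdisks, and ruling out circles enclosing exactly one puncture (your sphere-intersection parity argument is a fine substitute for the paper's observation that such a circle would have to be nullhomotopic in $B_T - T$). But the step you defer --- the circles parallel to $\partial E$, enclosing both punctures --- is not an obstacle to be eliminated; it is where the proof ends, and your stated plan for it (``eliminate them \dots then ensure the disk can be taken on the $R$ side'') cannot be carried out. Such circles are in general unavoidable: if $\partial D_T$ lies on the $\partial B_S$ side of $\partial E$, then an arc in $E$ from $\partial E$ to a puncture runs from the side of $D_T$ containing $NW_T, SW_T$ to the side containing the other strand (the punctures lie on the strand joining $NE_T$ to $SE_T$), so $D_T \cap E$ must have an odd --- in particular nonzero --- number of components after your cleanup. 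Indeed, a splitting disk for $T$ disjoint from $E$ with boundary on the $B_S$ side would leave both strands of $T$ on one side of it, contradicting that it splits $T$. So ``pushing $D_T$ off $E$'' is impossible in that configuration, and the ``$B_S$-case'' you propose to convert at the end never arises as you describe it.

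The paper's resolution is to use such a circle rather than remove it. After the cleanup, every remaining circle of $D_T \cap E$ is parallel to $\partial E$; take one that is innermost on $D_T$. Its subdisk $\delta \subset D_T$ has interior disjoint from $E$, so $\delta$ lies entirely in $B_R$ or entirely in $B_S$. It cannot lie in $B_S$: the strands of $S$ run from the two punctures (which lie in the subdisk of $E$ bounded by $\partial\delta$) out to $\partial B_T$ (which lies outside it), so they would have to cross $\delta$, contradicting that $D_T$ misses the tangle. Hence $\delta \subset B_R$, its boundary is a circle in $E$ parallel to $\partial E$, i.e.\ a representative of $C_R$ on $\partial B_R$, and $\delta$ is disjoint from the strands of $R$ --- it is exactly the splitting disk for $R$ you need. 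This short endgame, which exploits the asymmetric strand pattern of \Cref{fig:tangle-addition} (the strands of $S$ connect the punctures of $E$ to $\partial B_T$, while $R$ may carry a strand joining the two punctures), is the genuine content missing from your proposal; everything before it is essentially the paper's argument.
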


\begin{figure}
\includegraphics[scale=0.4]{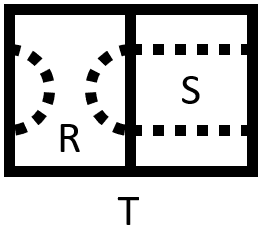}
\caption{A $2$-tangle $T$ formed as a sum of two other $2$-tangles with particular patterns. The ambient ball $B_T$ for $T$ is the union $B_R \cup B_S$ of the ambient balls for $R$ and $S$. They are attached in such a way that $B_R \cap B_S$ is a disk.}\label{fig:tangle-addition}
\end{figure}

\begin{proof}
Of course, if $R$ is split then $T$ is split. Conversely, suppose we have a splitting disk $D \subset B_T$ for $T$. Choose $D$ in such a way that $D$ is in general position and $\bdy D$ is disjoint from $B_R$. The intersection $D \cap B_R$ must separate the strands of $R$ from each other. If any components of $D \cap B_R \cap B_S$ bound disks in the punctured surface $B_R \cap B_S$, those components can be removed by cutting and capping. Also, no component of $D \cap B_R \cap B_S$ may separate the $NE$ point of $R$ from the $SE$ point of $R$ in $B_R \cap B_S$, because the circle must be nullhomotopic in $B_T - T$. Then there must be an odd number of remaining components of $D \cap B_R \cap B_S$, and they must all be separating circles for $R$. An innermost such circle in $D$ would bound a splitting disk for $R$.
\end{proof}

For a simple theta-curve $\theta$ and spanning disk $D$ with $\Seq(\theta, D) = (+, -)$, deleting a regular neighborhood of $D$ creates a $3$-tangle of the pattern shown in \Cref{fig:plus-minus-tangle}. This $3$-tangle is well-defined up to simultaneous braiding on the left and right, and the strands can be labelled as the first, second, and third strands based on the order they appear on $e_0$. Let $P_D$ be the $2$-tangle formed by deleting the first strand, and $Q_D$ the $2$-tangle formed by deleting the third. We will call $D$ \emph{(a)-split} if $P_D$ is nonsplit and $Q_D$ is split, where splitting is indicated by the jagged line. Conversely, $D$ will be called \emph{(b)-split} if $P_D$ is split and $Q_D$ is nonsplit. For spanning disks $D$ with sequence $(-, +)$, we can form $P_D$ and $Q_D$ in a similar way, but we use the opposite convention for (a)- and (b)-splitting: $D$ is (a)-split if $P_D$ is split and $Q_D$ is nonsplit.

Note that (a)- and (b)-splitting only apply to pairs $(\theta, D)$ with $h_\pm(\theta, D) = 1$, and no disk may be both (a)-split and (b)-split.

\begin{figure}
\includegraphics[scale=0.4]{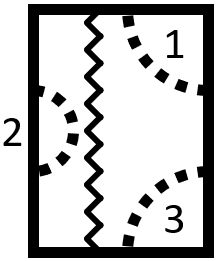}
\caption{A $3$-tangle corresponding to a spanning disk with sequence $(+, -)$. The splitting condition is determined by the jagged line.}\label{fig:plus-minus-tangle}
\end{figure}

\begin{lem}\label{lem:a-b-splitting}
If $k$ is a knotoid such that condition (a) from \Cref{thm:height-two} is true, then every minimal spanning disk for $\tau(k)$ is (a)-split. If instead (b) is true, every minimal spanning disk is (b)-split.
\end{lem}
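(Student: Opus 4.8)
The plan is to translate the combinatorial splitting conditions into the language of compatible spanning disks and then feed everything into the dichotomy of \Cref{thm:height-two}. The central tool is a reconstruction correspondence: for a minimal spanning disk $D$ of $\tau(k)$ with $\Seq(\tau(k),D)=(+,-)$, a splitting disk for $Q_D$ becomes, after regluing the neighborhood of $D$ that was removed to form the $3$-tangle of \Cref{fig:plus-minus-tangle}, exactly the data of a spanning disk $D'$ with $\Seq=(-,+)$ that is compatible with $D$ and whose two intersections with $e_0$ both come after those of $D$; dually, a splitting disk for $P_D$ corresponds to a compatible $(-,+)$ disk whose intersections both come before those of $D$. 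The same mapping (a splitting disk for $P$ yields a \emph{before}-partner, one for $Q$ an \emph{after}-partner) holds for disks of sequence $(-,+)$, and the reversed $P/Q$ convention in the definition is arranged precisely so that, for either sequence, $D$ being (a)-split says that its compatible opposite partners all realize condition (a) of \Cref{thm:height-two}.

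Granting this correspondence, the two \emph{nonsplit} halves are immediate. Assume condition (a) holds, so by \Cref{thm:height-two} condition (b) fails. If some minimal $(+,-)$ disk $D$ had $P_D$ split, the correspondence would produce a compatible $(-,+)$ disk whose intersections all precede those of $D$, a configuration of type (b) --- contradiction. Hence $P_D$ is nonsplit for every $(+,-)$ disk; a symmetric computation (a splitting disk for $Q$ of a $(-,+)$ disk reconstructs a $(+,-)$ disk sitting after it, again a type-(b) configuration) shows $Q$ is nonsplit for every $(-,+)$ disk. These are exactly the nonsplit requirements in the definition of (a)-split for each sequence.

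It remains to verify the \emph{split} halves, i.e.\ that every minimal disk participates in a type-(a) configuration. First I would record the rigidity of compatible opposite pairs: for compatible $D$ (sequence $(+,-)$) and $D'$ (sequence $(-,+)$), analyzing the oriented sphere $\Sigma=D\cup(-D')$ as in the proof of \Cref{lem:compatible-shift-moves} shows the four intersections of $e_0$ with $\Sigma$ alternate in sign, and the only orderings compatible with the two prescribed sequences are $(+D,-D,-D',+D')$ and $(-D',+D',+D,-D)$. Thus the intersections never interleave, so any compatible opposite pair realizes condition (a) or (b), and under the standing assumption it must be (a). Starting from the reference pair $D_1,D_2$ produced by \Cref{lem:main-lem} just before \Cref{thm:height-two}, I would connect an arbitrary minimal disk $D$ to $D_1$ through the chain of compatible minimal disks of \Cref{lem:main-lem} and propagate the (a)-split property along it: across a step between disks of opposite sequence the forced ordering above immediately supplies the type-(a) configuration for both, while across a step between disks of the same sequence the two disks are nested (again by the sign analysis) and one transports an after-partner from one to the other through the region between them.

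The main obstacle is the reconstruction correspondence itself: making precise how a splitting disk in the $3$-tangle complement $S^3-\nu(D)$ reassembles into a genuine spanning disk of $\tau(k)$, checking that the splitting class marked by the jagged line in \Cref{fig:plus-minus-tangle} is exactly the one cut out by a compatible opposite disk, and confirming that the reconstructed disk carries the opposite sign sequence on the correct side of $D$. The secondary technical point is the same-sequence (nested) step in the propagation, where one must use minimality to see that the $1$-tangle trapped between the two nested disks is trivial, so that transporting the after-partner changes neither its sign sequence nor its side.
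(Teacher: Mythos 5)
Your argument is circular. Its load-bearing step is ``Assume condition (a) holds, so by \Cref{thm:height-two} condition (b) fails,'' and you invoke the same exclusivity again when you assert that every compatible opposite pair ``must be (a).'' But the exclusivity half of \Cref{thm:height-two} --- that (a) and (b) cannot both hold --- is not available while proving \Cref{lem:a-b-splitting}: in the paper that implication is \emph{deduced from} this lemma (if both held, every minimal disk would be simultaneously (a)-split and (b)-split, which is impossible), and the only part of \Cref{thm:height-two} proved independently of the lemma is the ``at least one of (a), (b)'' half, via the sign-alternation argument of \Cref{lem:compatible-shift-moves}. So the lemma must be proved while the possibility that (a) and (b) both hold is still open; ruling that out is precisely its purpose. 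The paper's proof never needs exclusivity: given (a), it draws $k$ in the normal form of \Cref{fig:height-two-type-a} as a sum of tangles $H$ and $X$, observes that $H$ is nonsplit (else $h(k) = 0$), applies \Cref{lem:tangle-splitting} once to conclude that the two given disks are (a)-split, applies it a second time to show that (a)-splitness passes between any two compatible spanning disks, and then propagates along the chains supplied by \Cref{lem:main-lem}.

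Separately, the ``reconstruction correspondence'' on which you rest everything is not merely a technical point to be checked later; as stated it is false. A splitting disk for $Q_D$ is only required to be disjoint from the first and second strands of the $3$-tangle; it may meet the third strand arbitrarily many times. After regluing $\nu(D)$ it therefore yields a disk all of whose intersections with $e_0$ lie after those of $D$, but possibly four or more of them, and with no a priori control on the resulting sign sequence --- so it need not be a \emph{minimal} spanning disk realizing $(-,+)$, and hence does not produce a configuration of type (a) or (b) in the sense of \Cref{thm:height-two}, whose setup requires compatible disks realizing exactly the sequences $(+,-)$ and $(-,+)$. Some trimming or minimization argument would be needed to repair this, and the same difficulty infects your same-sequence ``nested'' propagation step. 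The clean geometric route is the paper's: stay on the tangle side, where splitness is the primitive notion and \Cref{lem:tangle-splitting} does both the base case and the propagation, rather than trying to convert splitting disks back into spanning disks.
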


\begin{proof}
Suppose that (a) is true of $k$. Then $k$ may be drawn as in \Cref{fig:height-two-type-a}, and the tangle $H$ must be nonsplit, as otherwise $k$ would have height $0$. The $3$-tangles corresponding to $D_1$ and $D_2$ are each formed by adding one copy of $H$ with one copy of $X$ in the appropriate order. By \Cref{lem:tangle-splitting}, $D_1$ and $D_2$ are both (a)-split. Furthermore, by another application of \Cref{lem:tangle-splitting}, if $D$ and $D'$ are any two compatible spanning disks such that one is (a)-split, then the other is (a)-split as well. Then \Cref{lem:main-lem} implies that all minimal spanning disks are (a)-split.

\begin{figure}
\includegraphics[scale=0.4]{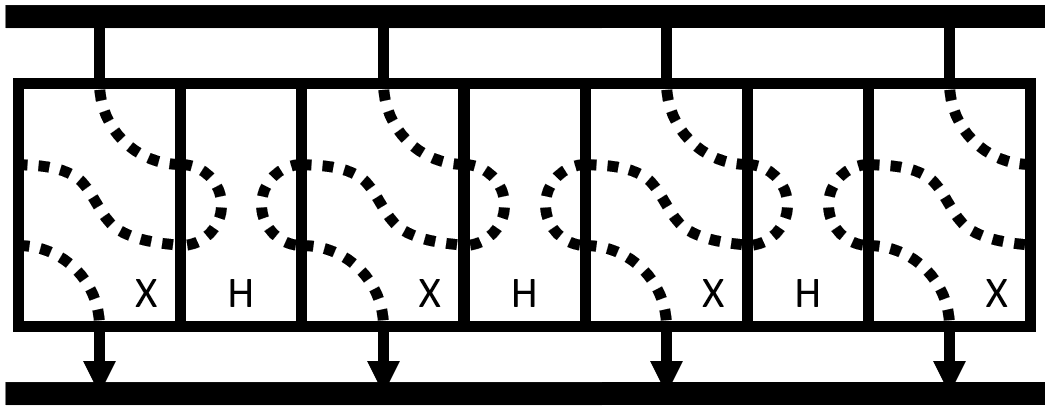}
\caption{A knotoid for which (a) is true, decomposed into tangles $H$ and $X$.}\label{fig:height-two-type-a}
\end{figure}

The same reasoning shows that if (b) is true of $k$, then all minimal spanning disks of $k$ are (b)-split.
\end{proof}

\begin{proof}[Proof of \Cref{thm:height-two}]
First we show that (a) or (b) is true. Suppose we have any choice of $D_1$ and $D_2$. By the same reasoning as in the proof of \Cref{lem:compatible-shift-moves}, a positive intersection of $e_0$ with $D_1$ must be followed by a negative $D_1$ intersection or positive $D_2$ intersection, and a negative $D_2$ intersection must be followed by a positive $D_2$ intersection or negative $D_1$ intersection. Therefore, the overall sequence of intersections is either $(+D_1, -D_1, -D_2, +D_2)$ or $(-D_2, +D_2, +D_1, -D_1)$.

That (a) and (b) cannot both be true follows from \Cref{lem:a-b-splitting}.
\end{proof}

\begin{exmp}
Consider the knotoid $k$ shown in \Cref{fig:borromean}. The index polynomial is $0$, but the Turaev polynomial tells us that the positive and negative heights are both $1$. The spanning disk corresponding to the marked shortcut is neither (a)-split nor (b)-split, because the corresponding tangles $P$ and $Q$ are both split. Therefore, $k$ is neither type (a) nor type (b), so $(+, -)$ is its only minimal attainable sequence.

\begin{figure}
\includegraphics[scale=.45]{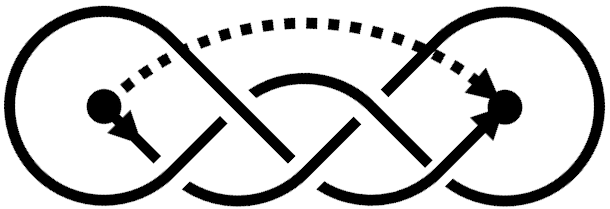}
\caption{A knotoid for which $(+, -)$ is the only minimal attainable sign sequence. It is unlike the Kinoshita knotoid in that its index polynomial is trivial.}\label{fig:borromean}
\end{figure}
\end{exmp}

We now have a partition of the set of height-$2$ knotoids into six categories: Type (a), type (b), and four categories for knotoids that each have only one minimal attainable sequence. For any knotoid $k$ with height $2$, the rotation $\rot(k)$ is in a different category from $k$. Together with \Cref{cor:odd-rotatable}, this implies the following corollary.

\begin{cor}
No proper knotoid with height below $4$ is rotatable.
\end{cor}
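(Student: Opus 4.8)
The plan is to reduce everything to the height-$2$ case and then exploit the six-category partition together with the behavior of rotation on attainable sequences. First I would record that a proper knotoid has height at least $1$, since the height-$0$ knotoids are exactly the knot-type ones, which are improper by definition. By \Cref{cor:odd-rotatable} a rotatable knotoid has even height, so a proper rotatable knotoid of height below $4$ can have height neither $1$ nor $3$ (wrong parity) nor $0$ (not proper). Hence the only surviving possibility is height exactly $2$, and it suffices to show that no height-$2$ knotoid equals its own rotation.

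For this I would compare, for a fixed height-$2$ knotoid $k$, the category of $k$ with that of $\rot(k)$. By \Cref{prop:involutions-sequences} the map $A \mapsto -A$ is a bijection from the attainable sequences of $k$ onto those of $\rot(k)$, carrying minimal sequences to minimal sequences. For the four categories whose unique minimal sequence is one of $(+,+)$, $(-,-)$, $(+,-)$, $(-,+)$, negation moves the sequence to a different one of these four: $(+,+) \mapsto (-,-)$, $(+,-) \mapsto (-,+)$, and likewise for the other two. Thus $\rot(k) \neq k$ whenever $k$ lies in any of these four categories.

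The remaining case, which I expect to be the main obstacle, is that of types (a) and (b), both of which have minimal-sequence set $\{(+,-),(-,+)\}$. Since negation fixes this set, the sequence data alone cannot distinguish the two types, so I must show directly that rotation interchanges (a) and (b). Here I would track how rotation acts on the ordering of intersections of $e_0$ with the compatible disks $D_1$ (realizing $(+,-)$) and $D_2$ (realizing $(-,+)$) supplied by \Cref{thm:height-two}. The key observation is that rotation, being symmetry composed with mirror reflection, preserves the orientation of $e_0$ from $v_0$ to $v_1$ while reversing every intersection sign. Consequently, if $k$ is type (a) with intersection order $(+D_1, -D_1, -D_2, +D_2)$ along $e_0$, then in $\rot(k)$ the same disks appear in the same order but carry signs $(-, +, +, -)$; the disk formerly realizing $(+,-)$ now realizes $(-,+)$ and vice versa, so after relabelling according to the convention for $\rot(k)$ the disk realizing $(-,+)$ has both its intersections first. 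This is exactly condition (b), and running the argument in reverse shows type (b) maps to type (a).

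Since rotation therefore moves a height-$2$ knotoid to a strictly different category in every one of the six cases, no height-$2$ knotoid is rotatable, which together with the parity reduction yields the corollary. The only genuinely delicate verification is the (a)$\leftrightarrow$(b) interchange: one must confirm that the combination of sign reversal and orientation preservation really swaps the two splitting conditions rather than fixing them, whereas the other five comparisons follow formally from \Cref{cor:odd-rotatable} and \Cref{prop:involutions-sequences}.
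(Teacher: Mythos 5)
Your proposal is correct and follows essentially the same route as the paper: the paper's proof is exactly the remark preceding the corollary, namely that \Cref{cor:odd-rotatable} rules out odd heights, properness rules out height $0$, and rotation moves any height-$2$ knotoid to a different one of the six categories (the four singleton-sequence categories being permuted by sign negation via \Cref{prop:involutions-sequences}, and types (a) and (b) being interchanged). Your write-up simply makes explicit the (a)$\leftrightarrow$(b) interchange that the paper leaves to the reader, and your verification of it is correct.
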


The author does not know if any proper rotatable knotoids exist. In \cite{BBHL18} it is shown that a knotoid cannot be rotatable if its double branched cover (see \Cref{sec:lifting}) is hyperbolic.

\nocite{*}
\printbibliography

@article{Kin72,
    author = "Kinoshita, Shin'ichi",
    title = "On Elementary Ideals of Polyhedra in the 3-Sphere",
    journal = "Pacific Journal of Mathematics",
    volume = "42",
    number = "1",
    year = "1972",
    pages = "89--98"
}

@article{Tur12,
    author = "Turaev, Vladimir",
    title = "Knotoids",
    journal = "Osaka Journal of Mathematics",
    volume = "49",
    year = "2012",
    pages = "195--223"
}

@article{GuKa17,
    author = "Gügümcü, Neslihan and Kauffman, Louis",
    title = "New Invariants of Knotoids",
    journal = "European Journal of Combinatorics",
    volume = "65",
    year = "2017",
    pages = "186--229"
}

@unpublished{BBHL18,
    author = "Barbensi, Agnese and Buck, Dorothy and Harrington, Heather and Lackenby, Marc",
    title = "Double Branched Covers of Knotoids",
    year = "2018",
    url = "https://arxiv.org/abs/1811.09121"
}

@article{KIL18,
    author = "Kim, Sera and Im, Young Ho and Lee, Sunho",
    title = "A Family of Polynomial Invariants for Knotoids",
    journal = "Journal of Knot Theory and Its Ramifications",
    volume = "27",
    number = "11",
    year = "2018"
}

@unpublished{GDS19,
    author = "Goundaroulis, Dimos and Dorier, Julien and Stasiak, Andrzej",
    title = "A Systematic Classification of Knotoids on the Plane and on the Sphere",
    year = "2019",
    url = "https://arxiv.org/abs/1902.07277"
}

@unpublished{KoTa20,
    author = "Korablev, Philipp and Tarkaev, Vladimir",
    title = "A Relation Between the Crossing Number and the Height of a Knotoid",
    year = "2020",
    url = "https://arxiv.org/abs/2009.02718"
}

@unpublished{Kut20,
    author = "Kutluay, Deniz",
    title = "Winding Homology of Knotoids",
    year = "2020",
    url = "https://arxiv.org/abs/2002.07871"
}

\end{document}